\newif\ifarXiv
\newtheorem{thm}{Theorem}[section]
\newtheorem{cor}[thm]{Corollary}
\newtheorem{lem}[thm]{Lemma}
\newtheorem{pro}[thm]{Proposition}
\newtheorem{defi}[thm]{Definition}
\begin{document}
\ifarXiv
\title[R. Grunert, W. K\"uhnel, G. Rote: PL Morse theory in low dimensions]
{PL Morse theory in low dimensions}
\else
\title[PL Morse theory in low dimensions]
{PL Morse theory in low dimensions}
\fi
\author[Romain Grunert, Wolfgang K\"uhnel, and G\"unter Rote]{Romain
  Grunert, Wolfgang K\"uhnel, and G\"unter Rote}
\thanks{}
\subjclass[2010]{57R70; 
  57Q99, 
52B70, 
68Q17
}
\keywords{}

\begin{abstract}
{\small We discuss a PL analogue of Morse theory for PL manifolds. 
There are several notions
of regular and critical points. A point is homologically  regular if the
homology does not change when passing through its level, it is 
strongly regular if the function can serve as one coordinate in a chart.
Several criteria for strong regularity are presented.
In particular we show that in low
dimensions $d \leq 4$ a homologically regular point on a PL $d$-manifold
is always strongly regular.
Examples show that this fails to hold in higher dimensions $d \geq 5$.
One of our constructions involves an 8-vertex embedding of the dunce hat into
a polytopal 4-sphere with 8 vertices such that a regular neighborhood 
is Mazur's contractible 4-manifold. 
}
\end{abstract}
\maketitle
\section{Introduction}
What is nowadays called \emph{Morse Theory} after its pioneer Marston Morse
(1892--1977) has two roots:
One from the calculus of variations  \cite{Morse1}, the other
one from the differential topology of manifolds \cite{Morse2}.
In both cases, the idea is to consider stationary points for the first variation
of smooth functions or functionals. Then the second variation around
such a stationary point describes the behavior in a neighborhood.
In finite-dimensional calculus this can be completely  described by 
the Hessian of the function provided that the Hessian is non-degenerate.
In the global theory of (finite-dimensional) differential manifolds,
smooth Morse functions can be used for a decomposition of the manifolds
into certain parts. Here the basic observation is that generically
a smooth real function has isolated critical points (that is, points with
a vanishing gradient), and at each critical point the Hessian
matrix is non-degenerate. 
The index of the Hessian is then taken as the \emph{index}
of the critical point.
This leads to
the Morse lemma and the Morse relations, as well as a handle 
decomposition of the manifold \cite{Morse1,Milnor,MC1,MC2}.
Particular cases are height functions of submanifolds of Euclidean spaces.
Almost all height functions are non-degenerate, and for compact manifolds
the average of the number of critical points equals the total absolute
curvature of the submanifold. Consequently, the infimum of the total
absolute curvature coincides with the \emph{Morse number} of a
manifold, which is defined as the minimum possible number of critical points 
of a Morse function
\cite{Kui1}.

\medskip
Already in the early days of Morse theory, this approach was extended
to non-smooth functions on suitable spaces \cite{Morse3,Morse4,Kui1,Kui2}.
One branch of that development led to several possibilities
of a Morse theory for PL manifolds or for polyhedra in general.

\medskip
First of all, it has to be defined what a critical point is supposed to be
since there is no natural substitute for the gradient and the Hessian
of a function.
Instead the typical behavior of such a function at a critical or non-critical
point has to be adapted to the PL situation.
Secondly, it cannot be expected that non-degenerate points are generic
in the same sense as in the smooth case, at least not extrinsically
for submanifolds of Euclidean space:
For example,
a monkey saddle of a height function on a smooth surface in 3-space
can be split by a small perturbation of the direction of the height vector
into two non-degenerate saddle points.
By contrast, a monkey saddle on a PL surface in space is locally stable under
such perturbations \cite{Ba}.
Abstractly, one can split the monkey saddle into an edge with two endpoints
that are ordinary saddle points, see \cite[Fig.~3]{EHZ}. 
%
%
%
Finally, in higher dimensions we have certain topological phenomena
that have no analogue in classical Morse theory like contractible but not 
collapsible polyhedra, homology points that are not homotopy points,
non-PL triangulations and non-triangulable topological manifolds.

Fro{m} an application viewpoint, piecewise linear functions on domains
of high dimensions arise in many fields, for example from
simulation experiments or from measured data. One powerful way to
explore such a function that is defined, say, on a three-dimensional
domain, is by the interactive
visualization of level sets. In this setting, it is interesting to
know the topological changes between level sets, and
 critical points are precisely those points where
such changes
occur.

\smallskip

After an introductory section about
polyhedra and PL manifolds (Section~\ref{sec:Polyhedra}),
we review the definitions of
regular and critical points in a homological sense
in Section~\ref{sec:critical-points}.
In 
Section~\ref{sec:our-critical-points}, we contrast this with
what we call \emph{strongly regular} points
(Definition~\ref{Morse function}).
In accordance with classical Morse Theory,
we distinguish the points that are not strongly regular
into non-degenerate critical points and
degenerate critical points,
and we define PL Morse
functions as functions that have no degenerate critical points.
Section~\ref{isotopy} briefly discusses the construction of a PL
isotopy between level sets
across strongly regular points.
Section~\ref{sec:with-boundary} extends the treatment to surfaces with boundary.

Another branch of the development was established by Forman's 
\emph{Discrete Morse theory}  \cite{Fo}. Here in a purely combinatorial way
functions are considered that associate certain values to 
faces of various dimensions in a complex. These Morse functions are 
not a priori continuous functions in the ordinary sense. 
However,
as we show in  Section~\ref{sec:Forman},
they can be turned into PL Morse functions in the sense
defined above.

While in low dimensions up to 4,
  the weaker notion of H-regularity is sufficient to
 guarantee strong regularity
 (Section~\ref{sec:low-dimensions}),
 this is no longer true in higher dimensions.
 Sections~\ref{sec:high-dimensions}
and~\ref{sec:dunce-hat} give various examples of phenomena that
 arise in high dimensions.
Finally, in
Section~\ref{sec:decidable}, we discuss the algorithmic questions that
arise around the concept of strong regularity. In particular, we show
some undecidability results in high dimensions.

\smallskip The results of Sections \ref{sec:our-critical-points},
\ref{isotopy},
\ref{sec:Forman} and \ref{sec:decidable} are based on the Ph.D.\
thesis of R.~Grunert \cite{Gr}. Some preliminary approaches to these
questions were earlier sketched in \cite{Rote}.

\section{Polyhedra and PL manifolds}
\label{sec:Polyhedra}
\begin{defi}
A topological manifold $M$ is called a \emph{PL manifold} if it is
equipped with a covering $(M_i)_{i \in I}$ of \emph{charts} $M_i$
such that all coordinate transformations between two overlapping charts
are piecewise linear homeomorphisms of open parts of Euclidean space.

\medskip
Fro{m} the practical point of view, a compact PL $n$-manifold $M$ can be
interpreted as a finite \emph{polytopal complex} $K$ built up by convex 
$d$-polytopes such that $|K|$
is homeomorphic with $M$ and such that the star of each (relatively open) cell
is piecewise linearly homeomorphic with an open ball in $d$-space.
Since every polytope can be triangulated, any compact PL $d$-manifold
can be triangulated such that the link of every $k$-simplex is a
combinatorial $(d-k-1)$-sphere. Such a simplicial complex is often called
a \emph{combinatorial $d$-manifold}  \cite{Lu1}.

\end {defi}

\begin{figure}[hbt]
\centering
\includegraphics[height=60mm]{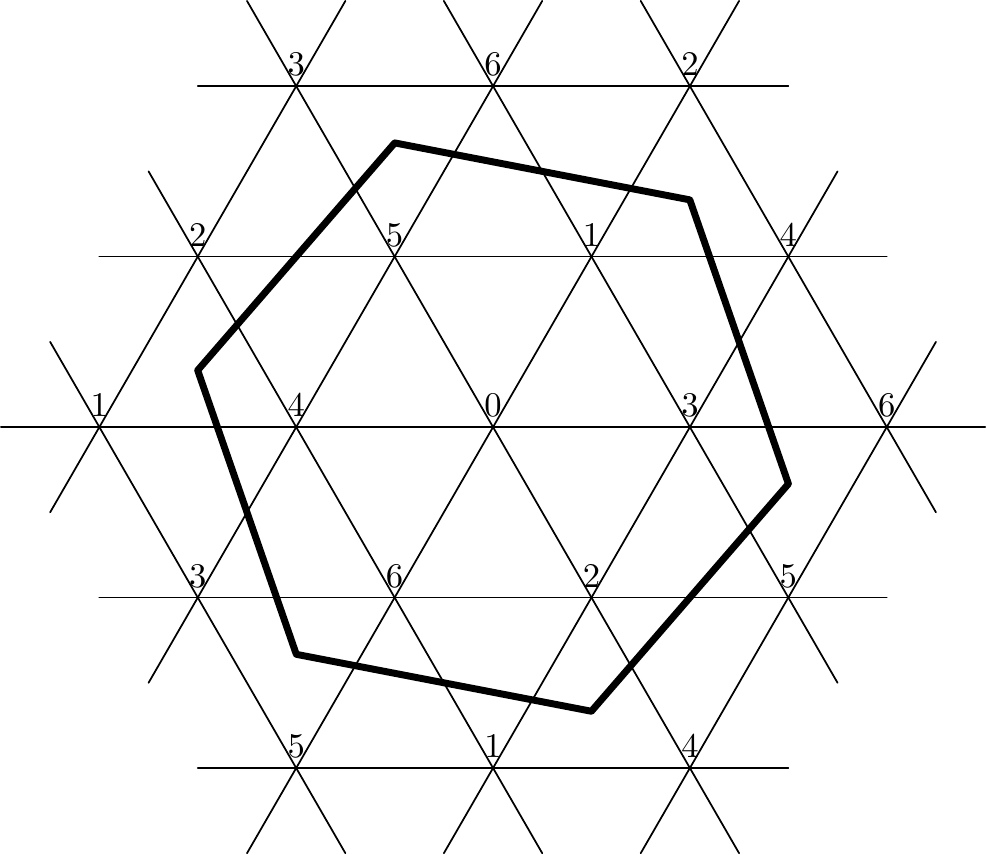}
\caption{The unique 7-vertex triangulation of the torus}
\label{fig:torus}
\end{figure}

In greater generality, one can consider finite polytopal complexes.
In the sequel we will consider a Morse theory for polytopal
complexes in general as well as for combinatorial manifolds.
If the polytopal complex is embedded into Euclidean space
such that every cell is realized by a convex polytope of the
same dimension, then we have the \emph{height functions} defined
as restrictions of linear functions. 

\medskip
A particular case is the
abstract 7-vertex triangulation of the torus (see Figure~\ref{fig:torus}) and its
realization in 3-space \cite{Lu2}.
Observe that a generic PL function with
$f(1) < f(2) < f(4) <  f(0) < \cdots$ has a monkey saddle at 
the vertex 0 since in the link of 0 the sublevel consists
of the three isolated vertices 1, 2, 4. Therefore, passing through the
level of 0 from below will attach two 1-handles simultaneously to 
a disc around the triangle 124. Compare Fig. 11 in \cite[p.99]{Ku1}.

\medskip
For a general outline and the terminology of PL topology we refer to \cite{RS},
where -- in particular -- Chapter 3 introduces the notion of
a \emph{regular neighborhood} of a subpolyhedron of a polyhedron.

\medskip
Occasionally, results in PL topology depend on the Hauptvermutung or
the Schoenflies Conjecture.

\medskip
{\sc The Hauptvermutung:} This conjecture stated that two PL manifolds
that are homeomorphic to one another are also PL homeomorphic to one another.

This conjecture is true for dimensions $d \leq 3$ but systematically false
in higher dimensions. However, it holds for $d$-spheres with $d \not = 4$
and for other special manifolds, compare \cite{Rud}.
 
\medskip
{\sc The PL Schoenflies Conjecture:}
This states the following: A combinatorial $(d-1)$-sphere embedded into
a combinatorial $d$-sphere decomposes the latter into two combinatorial
$d$-balls.

The PL Schoenflies Conjecture is true for $d \leq 3$ and unknown in 
higher dimensions. If however the closure of each component of $S^d 
\setminus S^{d-1}$ is a manifold with boundary, then the 
conclusion of the Schoenflies Conjecture 
is true for all $d \not = 4$ \cite[Ch.3]{RS}.

\section{Regular and critical points of PL functions}
\label{sec:critical-points}

The simplest way to carry over the ideas of Morse theory to PL is
to consider functions that are linear on each polyhedral cell 
(or simplex in the simplicial case) and \emph{generic},
meaning that no two vertices have the same image under the function.
Such a theory was sketched in \cite{BK,Ku1} for obtaining lower bounds
for the number of vertices of combinatorial manifolds of certain type.

We now define genericity for finite abstract polytopal complexes
(for a definition see \cite[Ch.5]{Zie}). 
Examples are simplicial complexes and cubical complexes.
Moreover, any subcomplex of the boundary complex of a convex $d$-polytope is
a polytopal complex embedded in $\mathbb{E}^d$.

\begin{defi} 
Let $P$ be a finite (abstract) polytopal complex.
A function $f \colon P \to \mathbb{R}$
is called \emph{generic PL} if it is linear on each polytopal cell separately
and  if $f(v) \not = f(w)$ for any two distinct
vertices $v,w$ of $P$. As a consequence, $f$ is not constant on any
edge or higher-dimensional cell.

\smallskip
Similarly, if $P \subset \mathbb{E}^n$ is a compact polyhedron
with the structure of a polytopal complex, then any linear function 
on $\mathbb{E}^n$ induces a \emph{height function} on $P$. This height function $f$
is called \emph{generic} if the same condition is satisfied.
It is clear that for almost all directions in space (with respect to the 
Lebesgue measure) the associated height function is generic.

\end{defi}

We denote by $f_a$ and $f^a$ the sublevel set and the superlevel set:
$$f_a := \{x \ | \ f(x) \leq a \}, \ \ f^a := \{x \ | \ f(x) \geq a \}$$
\begin{lem} \label{deformation-retract}
If $f \colon P \to \mathbb{R}$ is generic PL and if 
$f^{-1}[a,b]$ contains no vertex of $P$, then $f_a$ is
a strong deformation retract of the sublevel $f_b$.
\end{lem}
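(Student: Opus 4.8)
The plan is to build an explicit strong deformation retraction $r\colon f_b \times [0,1] \to f_b$ that is affine inside every cell. First I would reduce to the simplicial case: every convex polytope admits a triangulation using only its own vertices, and such triangulations can be chosen coherently on all of $P$, producing a simplicial subdivision $P'$ with the same vertex set. Then $f$ is still generic PL on $P'$, the sets $f_a$ and $f_b$ are unchanged, and $f^{-1}[a,b]$ still contains no vertex of $P'$. So I may assume from now on that $P$ is a simplicial complex.

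Fix a simplex $\sigma=\langle v_0,\dots,v_k\rangle$ of $P$. By the hypothesis and genericity, each vertex of $\sigma$ has $f$-value either $<a$ or $>b$; let $\sigma_-\le\sigma$ be the face spanned by the vertices with value $<a$ and $\sigma_+\le\sigma$ the face spanned by those with value $>b$, so that $\sigma=\sigma_-\ast\sigma_+$ (either factor may be empty). If $\sigma_-=\emptyset$ then $\sigma\cap f_b=\emptyset$; if $\sigma_+=\emptyset$ then $\sigma\subseteq f_a$ and $\sigma$ stays pointwise fixed. Otherwise every $x\in\sigma$ with $a\le f(x)\le b$ lies in neither $\sigma_-$ nor $\sigma_+$, so it has a unique join expression $x=(1-s)\,y+s\,z$ with $y\in\sigma_-$, $z\in\sigma_+$ and $0<s<1$; since $f(y)<a<b<f(z)$, the function $f$ is strictly increasing along the segment $[y,z]$, which therefore meets the level $a$ in a single point $w(x)$ lying between $y$ and $x$. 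Put $w(x):=x$ when $f(x)\le a$. The essential observation is that $w(x)$ does not depend on the simplex used to compute it: passing to a face $\tau\le\sigma$ replaces $\sigma_\pm$ by the faces $\tau\cap\sigma_\pm$ of $\tau$ and does not change barycentric coordinates, hence does not change the join decomposition of $x$ nor $w(x)$. Thus $w\colon f_b\to f_b$ is a well-defined map with $w|_{f_a}=\mathrm{id}$ and $f(w(x))=\min\{f(x),a\}$ for all $x$; in particular $w(f_b)=f_a$.

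Now I would set $r(x,t):=(1-t)\,x+t\,w(x)$. The segment joining $x$ to $w(x)$ stays inside the carrier simplex of $x$, hence in $|P|$, and $f(r(x,t))=(1-t)f(x)+t\min\{f(x),a\}\le f(x)\le b$, so $r$ maps $f_b\times[0,1]$ into $f_b$. Moreover $r(\,\cdot\,,0)=\mathrm{id}$, $r(\,\cdot\,,1)=w$ retracts $f_b$ onto $f_a$, and $r(x,t)=x$ for every $t$ whenever $x\in f_a$. Continuity is the last point: on each compact piece $\sigma\cap f_b$ the map $x\mapsto w(x)$ equals the identity on $\{f\le a\}$, equals a continuous function of the join coordinates $(y,z,s)$ on $\{a\le f\le b\}$ (these coordinates vary continuously there because $0<s<1$ throughout), and the two descriptions agree on the seam $\{f=a\}$, where $w(x)=x$. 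Since $f_b$ is a finite union of such pieces on which the formulas agree, $r$ is continuous, and $f_a$ is a strong deformation retract of $f_b$.

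I expect the main obstacle to be precisely this gluing: checking that the cell-by-cell ``push-down'' fits together into one continuous map, that is, that the join decomposition (and hence $w$) is compatible with passing to faces and does not jump across the level $a$. By contrast, the passage to a simplicial complex and the verification that $r$ takes values in $f_b$ are routine.
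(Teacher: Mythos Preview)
Your proof is correct and follows essentially the same cell-by-cell strategy as the paper's (very terse) argument: the paper simply observes that the claim is obvious for a single convex polytope and therefore holds on each cell and, ``in combination'', on all of $P$. You have made that combination step honest by passing to a simplicial subdivision on the same vertex set, writing down the explicit straight-line retraction via the join decomposition $\sigma=\sigma_-\ast\sigma_+$, and verifying that the resulting map $w$ is face-compatible and continuous across the seam $\{f=a\}$; these are exactly the details the paper leaves to the reader.
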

\begin{proof} 
  If $P$ is a convex polytope then the assertion is obviously true.
Therefore it holds for any single cell of $P$ and -- in combination -- 
for the entire complex $P$.
\end{proof}

It is easy to construct
an isotopy that smoothly interpolates between
the level sets
$f^{-1}(a)$ and
$f^{-1}(b)$, resulting in
mappings between different level sets $f^{-1}(t)$,
 $f^{-1}(t')$,
for $a\le t,t'\le b$,
that are piecewise linear.
With more technical effort one can construct such
an isotopy that is
piecewise linear even when considered as a function of all variables, including the interpolation parameter
$t\in[a,b]$
\cite[Section~4.2.3, Lemma 4.13 and Theorem 4.20]{Gr}.
We will make some more remarks about this topic in Section~\ref{isotopy}.
 
\medskip
 Lemma~\ref{deformation-retract} tells us that all points $p$ other than vertices
satisfy the regularity condition in Morse theory: The topology of
the sublevel does not change when passing through~$p$.
It remains to talk about the vertices since passing through a vertex
can definitely change the topology of the sublevel, as simple
examples show.
The topology can be measured preferably by topological invariants.
Therefore the following definition is suitable:
\begin{defi} \label{critical}
Let $f \colon P \to \mathbb{R}$ be generic PL and let $v$ be a
vertex with the level $f(v) = a$.
Then $v$ is called \emph{homologically critical for $f$} or \emph{H-critical} for short if 
$H_*(f_a, f_a \setminus \{v\};\mathbb{F}) \not = 0$
where $H_*$ denotes an appropriate homology theory with coefficients in a field~$\mathbb{F}$.
The total rank of $H_*(f_a, f_a \setminus \{v\})$ is
called the \emph{total multiplicity} of $v$ with respect to~$f$.
If 
\begin{displaymath}
H_k(f_a, f_a \setminus \{v\}) \not = 0 
\end{displaymath}
then we say that $v$ is
\emph{H-critical of index $k$}, and the rank of 
$H_k(f_a, f_a \setminus \{v\})$ is referred to as the corresponding 
\emph{multiplicity} of $v$ restricted to the index $k$. 
\end{defi}

{\sc Remark:} The idea behind this notion is that the homological type
of the sublevel set changes when passing through an H-critical point.
Since no two vertices have the same level under $f$, the homology
of $f_a \setminus \{v\}$ is the same as that for the open sublevel 
$(f_a)^\circ = \{x \ | \ f(x) < a \}$.

\medskip
By excision and the long exact sequence 
for the reduced homology $\widetilde{H}$
in a simplicial complex $P$ we can detect criticality 
in the link $lk(v)$ and the star $st(v)$ of a vertex $v$:
$$\widetilde{H}_k(f_a, f_a \setminus \{v\}) \cong \widetilde{H}_k(f_a \cap st(v), f_a \cap lk(v))
\cong \widetilde{H}_{k-1}(f_a \cap lk(v)) \cong \widetilde{H}_{k-1}(lk^-(v))$$
for $k \geq 1$ where $lk^-(v)$ denotes 
$$lk^-(v) := \{x \in lk(v) \ | \ f(x) \leq f(v)\} = lk(v) \cap f_a.$$ 
The homology of $lk^-(v)$ is the same as that of the full span of those vertices
in the link of $v$ whose level lies below $f(v)$.
Similarly we will use the notation
$$lk^+(v) := \{x \in lk(v) \ | \ f(x) \geq f(v)\} = lk(v) \cap f^a.$$ 

\medskip
This definition is also applicable to classical smooth Morse functions
on a smooth manifold. Then a critical point of index $k$ is also
critical with respect to Definition 3.3 with the same index, 
and the total multiplicity is always 1. 
Even for polyhedral surfaces the case of higher total
multiplicity occurs, as the example of a polyhedral monkey saddle shows.
It is easy to construct polyhedra such that there are critical vertices
of several indices simultaneously: Take the 1-point union of
a 1-sphere with a 2-sphere.

\medskip
{\sc Remark:} For polyhedra the homological definition used in \cite{CEH}
is equivalent to our definition above. It compares the homology of
the $(a-\epsilon)$-level with that of the $(a+\epsilon)$-level
if $a$ is the critical level.
However, for topological spaces in general both
definitions do not agree, as pointed out in \cite{Gove}.  
The problem with the incorrect \emph{Critical Value Lemma} in \cite{CEH}
is that a nested sequence of closed intervals can converge to a
common boundary point.
Then no open $\epsilon$-neighborhood around the critical level
can fit into any of the closed intervals.
Instead of the definition above one could compare the open sublevel 
$(f_a)^\circ = f_a \setminus f^{-1}(a)$ to the closed sublevel $f_a$.
For polytopal complexes (with closed polytopal faces)
this will lead to the same definition.

\medskip
There remains the possible case of $H_*(f_a, f_a \setminus \{v\}) = 0$
for some vertex $v$.
Since homology does not detect that it is critical we would like to call it 
\emph{non-critical} or \emph{regular}.
However, we have to be careful since regularity in the sense of
Lemma 3.2 is different. The question is: Can $f_{a+\epsilon}$ and $f_{a-\epsilon}$
be topologically distinct in this case?

\begin{defi}
A vertex $v$ with $f(v) = a$ is called \emph{homologically regular for $f$} 
or  \emph{H-regular} for short if
$H_*(f_a, f_a \setminus \{v\};\mathbb{F}) = 0$ for an arbitrary 
field $\mathbb{F}$.
\end{defi}
In classical Morse theory any H-regular point is actually regular
in a stronger sense (compare Section 4). 
We will see below that this is still true 
in dimensions $d \leq 4$ but it does not hold in general for PL manifolds and 
generic PL functions.

\begin{thm} {\rm (Morse relations, duality \cite{MC2,Kui1,Ku1})} \label{Morse duality}

Let $f \colon M \to \mathbb{R}$ be a generic PL function on a compact 
PL $d$-manifold $M$, and let $v_1, \ldots, v_n$ be the vertices.
By $a_i$ we denote the level $a_i = f(v_i)$. 
Then the \emph{Morse inequality}
\begin{equation}
  \label{Morse-inequality}
\sum_i{\rm rk} H_k(f_{a_i},f_{a_i} \setminus \{v_i\};\mathbb{F}) \geq {\rm rk} H_k(M;\mathbb{F})
\end{equation}
holds for any $k$ and any field $\mathbb{F}$.
Moreover,
\begin{equation}
\sum_k(-1)^k\sum_i{\rm rk} H_k(f_{a_i},f_{a_i} \setminus \{v_i\};\mathbb{F}) =  \sum_k (-1)^k {\rm rk}H_k(M,\mathbb{F}) = \chi(M).
\end{equation}
The expression ${\rm rk} H_k(f_{a_i},f_{a_i} \setminus \{v_i\};\mathbb{F})$
is nothing but the multiplicity of $v_i$ restricted to the index $k$,
and $\sum_i{\rm rk} H_k(f_{a_i},f_{a_i} \setminus \{v_i\};\mathbb{F})$
is the number $\mu_k(f)$ of critical points of index $k$, 
weighted by their multiplicities. Therefore the Morse inequality can  
also be written in the form
$$\mu_k(f)  \geq {\rm rk} H_k(M;\mathbb{F}).$$
Concerning the duality:

By Alexander duality in the link of a vertex $v$ one has
$\widetilde{H}_{d-k-1}(lk^+(v)) \cong \widetilde{H}_{k-1}(lk^-(v))$ 
for $1 \leq k \leq d-1$ and consequently
\begin{equation}
\widetilde{H}_{d-k}(f^a,f^a \setminus {v}) \cong \widetilde{H}_{k}(f_a,f_a \setminus {v}).
\end{equation} 
Clearly a local minimum of $f$ ($k=0$) is a local maximum ($k=d$)
for $-f$ and conversely.
This means that the number of critical points of $f$ of index $k$
coincides with the number of critical points of $-f$ of index $d-k$
(weighted with multiplicities).
\end{thm}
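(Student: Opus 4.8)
The plan is to obtain the inequalities from the standard algebraic Morse machinery applied to the sublevel filtration, and the duality statement from Alexander duality inside the vertex links; beyond one homotopy-theoretic identification, nothing specific to the PL category will be needed.

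First I would build the filtration. Relabel the vertices so that $a_1<a_2<\dots<a_n$, choose interleaving values $c_0<a_1<c_1<a_2<\dots<c_{n-1}<a_n<c_n$, and set $X_i:=f_{c_i}$, so that $\emptyset=X_0\subset X_1\subset\dots\subset X_n=M$. Since $f^{-1}[c_{i-1},a_i)$ contains no vertex, Lemma~\ref{deformation-retract} shows that $X_{i-1}$ is a strong deformation retract of the open sublevel $(f_{a_i})^{\circ}$; together with the remark following Definition~\ref{critical}, which identifies $H_*(f_{a_i},f_{a_i}\setminus\{v_i\})$ with $H_*(f_{a_i},(f_{a_i})^{\circ})$, this yields canonical isomorphisms $H_*(X_i,X_{i-1})\cong H_*(f_{a_i},f_{a_i}\setminus\{v_i\})$ for every $i$. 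Now \eqref{Morse-inequality} and the Euler characteristic identity are the usual consequences of the subadditivity of rank, and of the additivity of the Euler characteristic, along the long exact sequences of the pairs $(X_i,X_{i-1})$ --- exactly the argument used in smooth Morse theory (cf.\ \cite{Milnor,MC2,Kui1}). Because $\mathbb F$ is a field, $\sum_k(-1)^k{\rm rk}H_k(M;\mathbb F)=\chi(M)$, which supplies the remaining equality; and the reformulation in terms of $\mu_k(f)$ is just the definition of $\mu_k(f)$.

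For the duality I may assume, by the discussion in Section~\ref{sec:Polyhedra}, that $M$ carries a combinatorial triangulation, so that $lk(v)\cong S^{d-1}$ for every vertex $v$. Fix $v$ with $f(v)=a$ and let $L^{-}$, $L^{+}$ be the full subcomplexes of $lk(v)$ spanned by the vertices of level below, respectively above, $a$. Each simplex of $lk(v)$ either lies in $L^{-}$, lies in $L^{+}$, or straddles the level $a$; pushing the straddling simplices away from $L^{+}$ exhibits $|L^{-}|$ as a deformation retract of $|lk(v)|\setminus|L^{+}|$, and likewise $lk^{-}(v)\simeq|L^{-}|$ and $lk^{+}(v)\simeq|L^{+}|$ (the latter equivalences are already recorded in the text preceding the theorem). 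When $v$ is neither a local minimum nor a local maximum --- equivalently $1\le k\le d-1$ in what follows --- the subcomplex $L^{+}$ is nonempty and proper in $S^{d-1}$, so Alexander duality applies and, over a field, gives $\widetilde H_{k-1}(lk^{-}(v))\cong\widetilde H_{k-1}(|L^{-}|)\cong\widetilde H_{d-2-(k-1)}(|L^{+}|)\cong\widetilde H_{d-k-1}(lk^{+}(v))$. Substituting this into the excision and long exact sequence isomorphism displayed just before the theorem, applied both to $f$ and to $-f$ (whose sublevel sets are the superlevel sets of $f$), yields $\widetilde H_{d-k}(f^{a},f^{a}\setminus\{v\})\cong\widetilde H_{k}(f_{a},f_{a}\setminus\{v\})$.

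Finally I would treat the extreme indices and read off the statement on critical point counts. A vertex is a local minimum of $f$ precisely when $lk^{-}(v)=\emptyset$, hence H-critical only in index $0$, and a local maximum precisely when $lk^{+}(v)=\emptyset$, hence H-critical only in index $d$; these two conditions are interchanged on passing from $f$ to $-f$, which is the content of the displayed duality isomorphism in the remaining cases $k=0,d$. Thus for every vertex $v$ and every $k$ the multiplicity of $v$ at index $k$ for $f$ equals its multiplicity at index $d-k$ for $-f$, and summing over all vertices shows that $f$ has as many index-$k$ critical points (with multiplicity) as $-f$ has index-$(d-k)$ ones. The step needing the most care is the topological input of the preceding paragraph: the homotopy equivalences $lk^{\pm}(v)\simeq|L^{\pm}|$ and the deformation retraction $|lk(v)|\setminus|L^{+}|\searrow|L^{-}|$ that licenses the appeal to Alexander duality; the inequalities, by contrast, reduce to the standard filtered-complex argument once the identification $H_*(X_i,X_{i-1})\cong H_*(f_{a_i},f_{a_i}\setminus\{v_i\})$ is established.
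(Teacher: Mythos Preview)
Your argument is correct and follows precisely the route the paper indicates: the Morse inequalities via the sublevel filtration and the long exact sequences of the pairs (which is the standard argument of the cited references \cite{MC2,Kui1,Ku1}, the paper itself giving no further details), and the duality via Alexander duality in the $(d-1)$-sphere $lk(v)$, which the paper states in one line within the theorem. Your write-up simply supplies the details the paper leaves to the references, including the identification $H_*(X_i,X_{i-1})\cong H_*(f_{a_i},f_{a_i}\setminus\{v_i\})$ via Lemma~\ref{deformation-retract} and the full-subcomplex retraction that justifies the Alexander duality step.
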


\begin{defi} {\rm (perfect functions, tight triangulations)}

  If a function $f$ satisfies
  the Morse
inequality
\eqref{Morse-inequality}
in Theorem~\ref{Morse duality}
with equality, for each~$k$,
then it is usually called a 
\emph{perfect function} or a \emph{tight function}. 
A \emph{tight triangulation} of a manifold is a triangulation 
such that any generic PL function $f$ 
with arbitrarily chosen levels of the vertices is a tight function \cite{Ku2}.
\end{defi}

{\sc Examples:} A generic PL function $f$ 
on a compact surface without boundary
is perfect if and only $f_a$ is connected for any $a$.
On a simply connected compact 4-manifold without boundary it is
perfect if and only if $f_a$ is connected and simply connected for any $a$.
A triangulation of a surface is tight if and only if
it is 2-neighborly, one of a simply connected 4-manifold is tight
if and only if it is 3-neighborly. For any combinatorial sphere $K$ with $n$
vertices the power complex $2^K$ is a tightly embedded cubical manifold in 
$\mathbb{E}^{n+1}$, see \cite[3.24]{Ku2}.
\section{PL Morse functions}
\label{sec:our-critical-points}
By emphasizing the critical behavior of classical Morse functions 
(attaching a cell at each critical point) one
can adapt the classical Morse theory to the PL case as follows:
\begin{defi} \label{Morse function}
Let $M$ be a PL $d$-manifold and $f \colon M \to \mathbb{R}$ a 
generic PL function. 
\begin{itemize}
\item 
A point $p$ is called \emph{strongly regular} if there is a chart around $p$
such that the function $f$ can be used as one of the coordinates, i.e., if
in those coordinates
\begin{equation}
f(x_1, \ldots, x_d) = f(p) + x_d.
\end{equation}
If in a concrete polyhedral decomposition of $M$ distinct vertices
have distinct $f$-values, then $f$ is also generic PL, and moreover 
all points are strongly regular except possibly the vertices.

\item 
A vertex $v$ is called \emph{non-degenerate critical} if there is a PL chart
around $v$ such that in those coordinates $x_1, \ldots, x_d$ the function $f$
can be expressed as
\begin{equation}
f(x_1, \ldots, x_d) = f(v) - |x_1| - \cdots - |x_k| + |x_{k+1}| + \cdots + |x_d|.
\end{equation}
The number $k$ is then uniquely determined and coincides with the
index of $v$. The multiplicity is always $1$ in this case:
$H_k(f_a,f_a \setminus \{v\};\mathbb{F}) \cong \mathbb{F}$ and
$H_j(f_a,f_a \setminus \{v\}) = 0$ for any $j \not = k$.
The change by passing through the critical level can be either
$H_k(f_{a+\epsilon}) \cong H_k(f_{a-\epsilon}) \oplus \mathbb{F}$ 
or $H_{k-1}(f_{a-\epsilon}) \cong H_{k-1}(f_{a+\epsilon}) \oplus \mathbb{F}$. 
A function such that the second case never occurs is called a 
\emph{perfect function}. 
\item
The function $f$ is called a \emph{PL Morse function} if all vertices
are either non-degenerate critical or strongly regular.
In the terminology of \cite{Morse3} these are called 
\emph{topologically ordinary}
and \emph{topologically critical}, respectively. The function itself is
called \emph{topologically non-degenerate} in this case.
\end{itemize}
\end{defi}

The definitions of strongly regular and non-degenerate critical points
have in common that they require a local homeomorphism that transforms
$f$ into a certain PL~map~$g$.
It turns out that determining the topological type
of the embedding of $lk^-(v)$ into $lk(v)$ suffices
to verify such a requirement.
The connection between a characterization in terms of local charts
and equivalent characterizations in terms of $lk^-(v)$
is established by the following general fact:
There is a PL~homeomorphism between neighborhoods $N_v$ and $N_w$
mapping $v$ to $w$
and transforming a PL~map~$f$ on $N_v$ with $f(v)=0$
to a PL~map~$g$ with $g(w)=0$
if and only if
there is a PL~homeomorphism between $lk(v)$ and $lk(w)$
such that the signs of $f$ and $g$ at corresponding points agree.

For strongly regular points,
this observation leads to the following result:

\begin{lem} {\rm (strongly regular points)} \label{strongly regular}

        Let $f$ be a generic PL function on a combinatorial $d$-manifold.
	Then a vertex~$v$ with $f(v) = a$ is strongly regular 
        for $f$ if and only if 
	$lk^-(v)$ is a PL $(d-1)$-ball.
\end{lem}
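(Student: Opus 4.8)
The plan is to prove both directions of the equivalence, and the key tool is the general fact stated just before the lemma: a PL homeomorphism of neighborhoods carrying $f$ to $g$ (with matching signs at corresponding link points) exists if and only if there is a PL homeomorphism of the links respecting the sign pattern of $f$ and $g$.

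For the ``only if'' direction, suppose $v$ is strongly regular, so that in suitable PL coordinates $f(x_1,\dots,x_d) = a + x_d$ on a chart $N_v$ identified with an open $d$-ball. In that model, $lk(v)$ is identified with the boundary $(d-1)$-sphere of the ball, and $lk^-(v)$ corresponds to the closed hemisphere $\{x_d \le 0\}$, which is a PL $(d-1)$-ball. Since PL homeomorphisms carry PL balls to PL balls, the actual $lk^-(v)$ is a PL $(d-1)$-ball as well.

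For the ``if'' direction, assume $lk^-(v)$ is a PL $(d-1)$-ball. We know $lk(v)$ is a combinatorial (i.e.\ PL) $(d-1)$-sphere because $M$ is a combinatorial manifold. The set $lk^-(v)$ is a PL $(d-1)$-ball sitting inside the PL $(d-1)$-sphere $lk(v)$, and its complement $lk^+(v)$ is the closure of the other part. By Alexander duality in the sphere (used already in Theorem~\ref{Morse duality}), $lk^+(v)$ is a homology $(d-1)$-ball; but to conclude it is a genuine PL ball one would normally want the PL Schoenflies property. Here the crucial point is that $lk^-(v)$ is a full-dimensional PL ball with $lk(v)$ its ambient sphere, so the pair $(lk(v), \partial\, lk^-(v))$ decomposes $lk(v)$ into $lk^-(v)$ and $lk^+(v)$, each a manifold with boundary; by the version of PL Schoenflies quoted in Section~\ref{sec:Polyhedra} (valid for all $d \ne 4$, and separately known for the low dimensions at issue), $lk^+(v)$ is also a PL $(d-1)$-ball. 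Then both $lk^-(v)$ and $lk^+(v)$ are PL $(d-1)$-balls glued along their common boundary sphere, so the sign pattern of $f$ on $lk(v)$ — negative on one ball, positive on the other, zero on the separating sphere — matches exactly the sign pattern of the model function $g(x) = x_d$ on the standard sphere. Invoking the general fact gives a PL homeomorphism of a neighborhood of $v$ onto a neighborhood of the origin carrying $f - a$ to $x_d$, which is precisely the chart required by Definition~\ref{Morse function}.

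The main obstacle is the ``if'' direction, specifically promoting ``$lk^-(v)$ is a PL ball'' to ``$lk^+(v)$ is a PL ball,'' since in general a PL ball in a PL sphere need not have a ball complement without the Schoenflies conjecture. I expect the resolution to lean on the fact that $lk^-(v)$ is codimension zero in $lk(v)$, so that its frontier is a bicollared $(d-2)$-sphere in the $(d-1)$-sphere $lk(v)$ and the complementary piece is automatically a PL manifold with boundary — which brings it under the hypotheses of the conditional Schoenflies statement quoted earlier (true for all $d\ne 4$), with the low-dimensional cases handled directly. Once both sides are known to be PL balls, the rest is a routine application of the sign-matching criterion.
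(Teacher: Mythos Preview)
Your overall strategy is exactly the one the paper intends: reduce both directions to the ``general fact'' about sign-preserving PL homeomorphisms of links. The ``only if'' direction is fine.

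The gap is in the ``if'' direction. You claim that passing from ``$lk^-(v)$ is a PL $(d-1)$-ball'' to ``$lk^+(v)$ is a PL $(d-1)$-ball'' requires the PL Schoenflies conjecture, and you invoke the conditional version from Section~\ref{sec:Polyhedra}, leaving an unresolved case. This is a misdiagnosis. The Schoenflies problem concerns a codimension-one sphere $S^{d-2}\subset S^{d-1}$ with \emph{no} a priori information about either complementary region. Here you already know one side is a full-dimensional PL ball. In that situation the complement is \emph{always} a PL ball, in every dimension: a PL $(d-1)$-ball in a PL $(d-1)$-sphere is a regular neighborhood of any interior point, hence by uniqueness of regular neighborhoods it is ambient-isotopic to a standard ball, and so its closed complement is a ball as well (this is Newman's theorem; see \cite[Cor.~3.13]{RS}). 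No appeal to Schoenflies is needed, and no dimension is excluded.

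This is not a cosmetic point: it is exactly what separates Lemma~\ref{strongly regular} (no dimensional restriction) from Corollary~4.4 (which genuinely needs Schoenflies because there one starts only from ``$f^{-1}(a)\cap lk(v)$ is a sphere'' and must conclude that \emph{both} sides are balls). Once you replace your Schoenflies detour by the ball-complement fact, the rest of your argument --- matching the sign pattern to that of $g(x)=x_d$ on the standard sphere and applying the general fact --- goes through unchanged and coincides with the paper's intended proof.
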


In particular,
we obtain for strongly regular vertices~$v$
an embedding of a $(d-2)$-sphere into a $(d-1)$-sphere
that separates the latter into two $(d-1)$-balls,
namely, the boundary sphere $f^{-1}(a)\cap lk(v)$ of $lk^-(v)$
separates $lk(v)$
into the balls $lk^-(v)$ and $lk^+(v)$.
Such an embedding is called an unknotted $(d-1,d-2)$-sphere pair.
Thus, we can rephrase the previous characterization
in terms of unknotted sphere pairs:

\begin{cor}
	For dimension $d>1$,
	a vertex $v$ is strongly regular
	if and only if
	the pair $(lk(v),f^{-1}(a)\cap lk(v))$
	is an unknotted $(d-1,d-2)$-sphere pair.
\end{cor}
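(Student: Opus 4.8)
The corollary is a direct reformulation of Lemma~\ref{strongly regular}, so the plan is to translate ``$lk^-(v)$ is a PL $(d-1)$-ball'' into ``$(lk(v),f^{-1}(a)\cap lk(v))$ is an unknotted sphere pair'' and back. Throughout, write $S:=lk(v)$, which is a combinatorial $(d-1)$-sphere because $M$ is a combinatorial $d$-manifold, and put $L:=f^{-1}(a)\cap S$, $B^-:=lk^-(v)$, $B^+:=lk^+(v)$. The first thing to record is the elementary point-set picture: $S=B^-\cup B^+$, $B^-\cap B^+=L$, and $S\setminus L=(\{f<a\}\cap S)\sqcup(\{f>a\}\cap S)$ with $\overline{\{f<a\}\cap S}=B^-$ and $\overline{\{f>a\}\cap S}=B^+$. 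This uses genericity: since $f(v)=a$ and no other vertex has $f$-value $a$, the value $a$ is attained on $S$ only in the relative interiors of faces, where the linear, non-constant restriction of $f$ takes values on both sides of $a$; hence $L$ is the common topological frontier of $B^-$ and $B^+$ in $S$, and (when $L\ne\emptyset$) both complementary regions are non-empty.

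For the forward direction, suppose $v$ is strongly regular for $f$. Then $v$ is also strongly regular for $-f$ (reverse the last coordinate in the chart of Definition~\ref{Morse function}), so Lemma~\ref{strongly regular}, applied to $f$ and to $-f$ and using $lk^+_f(v)=lk^-_{-f}(v)$, shows that both $B^-$ and $B^+$ are PL $(d-1)$-balls. Consequently $L=\partial B^-=\partial B^+$ is a PL $(d-2)$-sphere, and by the previous paragraph it separates the PL $(d-1)$-sphere $S$ into the two PL $(d-1)$-balls $B^-$ and $B^+$; that is exactly what it means for $(S,L)$ to be an unknotted $(d-1,d-2)$-sphere pair. (In particular $v$ cannot be a local extremum here, since then one of $B^\pm$ would be empty; in that excluded case $v$ is not strongly regular and $L=\emptyset$ is not a $(d-2)$-sphere, so both sides of the asserted equivalence fail, with $d>1$ guaranteeing $d-2\ge 0$.)

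For the converse, suppose $(S,L)$ is an unknotted $(d-1,d-2)$-sphere pair, so $L$ is a PL $(d-2)$-sphere in $S$ and $S\setminus L$ has two components whose closures are PL $(d-1)$-balls. Since $d>1$, $L$ has dimension $\ge 0$, hence is non-empty, so by the point-set picture the two components of $S\setminus L$ are precisely $\{f<a\}\cap S$ and $\{f>a\}\cap S$, and their closures are $B^-$ and $B^+$. In particular $B^-=lk^-(v)$ is a PL $(d-1)$-ball, and Lemma~\ref{strongly regular} gives that $v$ is strongly regular.

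The only step requiring care is the first paragraph --- verifying that the level polyhedron $L$ really is the common boundary of $lk^-(v)$ and $lk^+(v)$, and that the two complementary regions of $L$ in $lk(v)$ match $lk^\pm(v)$; this is where genericity of $f$ and the fact that $lk(v)$ is a closed PL manifold enter. After that, the equivalence is a verbatim translation of Lemma~\ref{strongly regular} (together with its dual for $-f$) into the language of sphere pairs.
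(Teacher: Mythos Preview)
Your proof is correct and follows essentially the same route as the paper, which simply observes (in the paragraph preceding the corollary) that the boundary sphere $f^{-1}(a)\cap lk(v)$ of the ball $lk^-(v)$ separates $lk(v)$ into the two balls $lk^-(v)$ and $lk^+(v)$, which is precisely the definition of an unknotted sphere pair. The one minor variation is that you obtain the ball $lk^+(v)$ by applying Lemma~\ref{strongly regular} to $-f$, whereas the paper tacitly uses the standard PL fact that the closed complement of a PL ball in a PL sphere is again a PL ball; both arguments are valid and equally short.
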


The question whether
all embeddings of $(d-2)$-spheres into $(d-1)$-spheres are unknotted
is the Schoenflies problem.
Since $f$ is generic,
the embedding of $f^{-1}(a)\cap lk(v)$ in $lk(v)$ is locally flat.
Therefore another characterization for strongly regular vertices
is possible for the cases
where the Schoenflies problem in the PL locally flat category
is known to have an affirmative answer.

\begin{cor}
	Let $v$ be a vertex of a combinatorial $d$-manifold~$M$
	with $d>1$ and $d\neq 5$.
	Then $v$ is strongly regular
	if and only if
	$f^{-1}(a)\cap lk(v)$ is a $(d-2)$-sphere.
\end{cor}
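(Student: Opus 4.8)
The plan is to derive the statement from Lemma~\ref{strongly regular} and the corollary immediately preceding it, feeding in the known affirmative cases of the PL Schoenflies problem recalled in Section~\ref{sec:Polyhedra}. Throughout I would write $L:=lk(v)$; since $M$ is a combinatorial $d$-manifold, $L$ is a combinatorial $(d-1)$-sphere, and $f$ restricted to the subcomplex $L$ is again a generic PL function, with $a=f(v)$ not attained at any vertex of $L$. Put $S:=f^{-1}(a)\cap L$; this is the level set of $f|_L$ at height $a$, and it coincides with $\partial\bigl(lk^-(v)\bigr)$, the boundary in $L$ of the sublevel piece $lk^-(v)=L\cap f_a$.

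One implication is essentially free. If $v$ is strongly regular, then by Lemma~\ref{strongly regular} the set $lk^-(v)$ is a PL $(d-1)$-ball, so its boundary $S$ is a PL $(d-2)$-sphere; equivalently, this is just the observation that an unknotted $(d-1,d-2)$-sphere pair is in particular a sphere pair.

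For the converse, suppose $S$ is a PL $(d-2)$-sphere. Because $f$ is generic, $a$ is not a vertex value of $L$, and $S$ meets each simplex of $L$ in an affine slice missing that simplex's vertices; hence, as already noted before the corollary, the embedding $S\subset L$ is locally flat. Consequently the two closed complementary regions $lk^-(v)=L\cap f_a$ and $lk^+(v)=L\cap f^a$ are PL $(d-1)$-manifolds with common boundary $S$. We are thus precisely in the situation of the Schoenflies statement quoted in Section~\ref{sec:Polyhedra}: a PL $(d-2)$-sphere separating the PL $(d-1)$-sphere $L$, with both closed sides manifolds with boundary. That statement holds in all ambient dimensions different from $4$, i.e.\ here whenever $d-1\neq 4$, that is $d\neq 5$; it yields that $S$ splits $L$ into two PL $(d-1)$-balls. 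In particular $lk^-(v)$ is a PL $(d-1)$-ball, so $v$ is strongly regular by Lemma~\ref{strongly regular}. The small cases can also be settled directly: for $d=2$ two points decompose the circle $L$ into two arcs; for $d=3$ this is the classical Schoenflies theorem for $S^1\subset S^2$; for $d=4$ it is Alexander's theorem that every PL $2$-sphere in $S^3$ bounds a ball.

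The only genuine obstacle I foresee is the appeal to the PL locally flat Schoenflies problem in dimension $d-1$: this is exactly what forces the exclusion of $d=5$, corresponding to the still-open $4$-dimensional Schoenflies case, and for $d\neq 5$ nothing remains beyond invoking the relevant classical results. A secondary point worth spelling out carefully is the local flatness of $S$ in $L$, since it is this property that makes the complementary regions manifolds with boundary and hence places us within the range of validity of those Schoenflies results; but it is a routine consequence of the genericity of $f$ together with the fact that a PL function is affine, hence its level sets are affine slices, inside each cell.
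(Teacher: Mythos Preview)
Your proposal is correct and follows essentially the same route as the paper: the forward direction comes directly from Lemma~\ref{strongly regular}, and the converse combines the local flatness of $f^{-1}(a)\cap lk(v)$ in $lk(v)$ (from genericity of $f$) with the affirmative cases of the PL Schoenflies problem in ambient dimension $d-1\neq 4$, exactly as the text preceding the corollary indicates. Your explicit observation that local flatness makes both complementary regions manifolds with boundary, thereby matching the hypothesis of the Schoenflies statement quoted in Section~\ref{sec:Polyhedra}, is a helpful elaboration of what the paper leaves implicit.
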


Similar considerations for non-degenerate critical points
yield the following characterizations:

\begin{lem}  {\rm (non-degenerate critical points)}

        Let $f$ be a generic PL~function
        on a combinatorial $d$-manifold.
        Then a vertex~$v$ is non-degenerate critical
        for $f$ with index~$k$
        if and only if
        $lk^-(v)$ is a regular neighborhood
        of an unknotted $(k-1)$-sphere embedded into
        the $(d-1)$-sphere~$lk(v)$.
\end{lem}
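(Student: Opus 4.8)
The plan is to deduce this lemma from the general principle stated just above it in the excerpt: there is a PL homeomorphism between neighborhoods $N_v$ and $N_w$ carrying $v$ to $w$ and $f$ to $g$ (with $f(v)=g(w)=0$) if and only if there is a PL homeomorphism $lk(v)\to lk(w)$ under which the signs of $f$ and $g$ correspond. So I would fix the model map $g(x_1,\dots,x_d)=-|x_1|-\cdots-|x_k|+|x_{k+1}|+\cdots+|x_d|$ on $\mathbb{R}^d$, viewed on a small PL ball neighborhood $N_w$ of the origin $w=0$, and identify its link and the negative part of that link explicitly. The whole statement then reduces to: the pair $\bigl(lk(w),\,lk^-(w)\bigr)$ for this model $g$ is exactly a regular neighborhood of an unknotted $(k-1)$-sphere inside the $(d-1)$-sphere $lk(w)$, and conversely any vertex whose link realizes that pattern is PL-equivalent to the model via a link homeomorphism respecting signs.

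The key steps, in order. \emph{Step 1: Analyze the model.} On the standard ball $\{\max_i|x_i|\le 1\}$ the link of $0$ is the boundary sphere $S^{d-1}$, and $lk^-(0)=\{x\in S^{d-1}\mid -|x_1|-\cdots-|x_k|+|x_{k+1}|+\cdots+|x_d|\le 0\}$. Its "core" is the locus where $x_{k+1}=\cdots=x_d=0$, which is the subsphere $S^{k-1}=\{x\in S^{d-1}\mid x_{k+1}=\cdots=x_d=0\}$, standardly (hence unknottedly) embedded in $S^{d-1}$. I would show $lk^-(0)$ deformation-retracts onto this $S^{k-1}$ by pushing radially in the last $d-k$ coordinates, and more strongly that $lk^-(0)$ is a regular neighborhood of it (it is visibly a PL submanifold of the sphere, a neighborhood of $S^{k-1}$, and it collapses to $S^{k-1}$; one cites the uniqueness/characterization of regular neighborhoods from \cite[Ch.3]{RS}). \emph{Step 2: Forward direction.} If $v$ is non-degenerate critical of index $k$, there is by definition a chart turning $f$ into the model $g$; restricting this PL homeomorphism to links gives a PL homeomorphism $lk(v)\to lk(0)=S^{d-1}$ carrying $lk^-(v)$ onto $lk^-(0)$, which by Step 1 is a regular neighborhood of an unknotted $(k-1)$-sphere; unknottedness is a property of the pair preserved by PL homeomorphism of the ambient sphere, so $lk^-(v)$ is such a neighborhood. \emph{Step 3: Converse.} Suppose $lk^-(v)$ is a regular neighborhood $N$ of an unknotted $(k-1)$-sphere $\Sigma\subset lk(v)\cong S^{d-1}$. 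Unknottedness gives a PL homeomorphism $lk(v)\to S^{d-1}$ carrying $\Sigma$ to the standard $S^{k-1}$. By uniqueness of regular neighborhoods (in the sphere, which is a PL manifold) this homeomorphism can be adjusted to carry $N=lk^-(v)$ onto a regular neighborhood of standard $S^{k-1}$, and any two such regular neighborhoods are ambient-isotopic, hence one more adjustment lands $lk^-(v)$ exactly on the model $lk^-(0)$. This homeomorphism $lk(v)\to lk(0)$ then matches the sign pattern of $f$ with that of $g$ (negative on $lk^-$, positive on $lk^+$, zero on the common boundary $(d-2)$-sphere), so the general principle upgrades it to a PL homeomorphism of neighborhoods carrying $f$ to $g$; thus $v$ is non-degenerate critical of index $k$. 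Finally I would note that the index $k$ is recovered from the data (e.g.\ $\widetilde H_{k-1}(lk^-(v))\cong\widetilde H_{k-1}(S^{k-1})\neq 0$ pins it down), matching the multiplicity and index claims already recorded in Definition~\ref{Morse function}.

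The main obstacle is Step 3, specifically the passage from "$lk^-(v)$ is \emph{a} regular neighborhood of \emph{an} unknotted $(k-1)$-sphere" to "$lk^-(v)$ sits in $lk(v)$ exactly as the model $lk^-(0)$ sits in $lk(0)$." This needs two nontrivial inputs from PL topology: uniqueness of regular neighborhoods up to ambient PL isotopy (so that the particular choice of neighborhood does not matter), and the fact that an ambient PL homeomorphism carrying one unknotted sphere to the standard one can be chosen to also carry a chosen regular neighborhood to a chosen regular neighborhood of the standard sphere. Both are standard (see \cite[Ch.3]{RS}), but one must be careful that "regular neighborhood" here means a neighborhood in the $(d-1)$-sphere $lk(v)$ — a closed PL submanifold-with-boundary that collapses to the core — and that the boundary $(d-2)$-sphere $f^{-1}(a)\cap lk(v)$ separating $lk^-(v)$ from $lk^+(v)$ is indeed locally flat (true since $f$ is generic), so that no wild-embedding pathology intrudes. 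Everything else — Step 1 and Step 2 — is an explicit computation with the piecewise-linear absolute-value model plus a straightforward restriction-to-links argument.
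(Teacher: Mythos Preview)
Your proposal is correct and follows exactly the approach the paper indicates: the paper does not give an explicit proof of this lemma but derives it (via ``Similar considerations\ldots'') from the general principle you quote, reducing the question to identifying the pair $(lk(0),lk^-(0))$ for the model map $g$. Your Steps~1--3 carry this out faithfully, including the appeal to uniqueness of regular neighborhoods \cite[Ch.~3]{RS} for the converse, which is precisely the nontrivial PL input the paper is implicitly relying on.
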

\begin{cor}
        Let $f$ be a generic PL~function
        on a combinatorial $d$-manifold.
        Assume that the vertex~$v$ is H-critical of index~$k$.
        Then $v$ is non-degenerate critical for $f$ with index~$k$
        if and only if
	the embedding of $f^{-1}(a)\cap lk(v)$ into $lk(v)$
        is PL-homeomorphic
        to the embedding of $S^{k-1} \times S^{d-k-1}$
        into the sphere~$S^{d-1}$
        given by the boundary of a regular neighborhood
        of an unknotted $S^{k-1}$ in $S^{d-1}$.
\end{cor}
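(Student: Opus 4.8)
The plan is to deduce this Corollary from the preceding Lemma on non-degenerate critical points, using that $f^{-1}(a)\cap lk(v)$ is exactly the common boundary of $lk^-(v)$ and $lk^+(v)$, and using the H-criticality hypothesis to decide which of the two complementary regions of that boundary sphere is $lk^-(v)$. First I would record the elementary geometry of the slice. Since $f$ is generic, no vertex of $lk(v)$ lies at level $a$, so every point of $f^{-1}(a)\cap lk(v)$ lies in the relative interior of a face of $lk(v)$ of dimension $\ge 1$ on which $f$ is linear and non-constant; hence arbitrarily close to it, inside that face, $f$ takes values both below and above $a$. Consequently $lk^-(v)=\overline{\{x\in lk(v):f(x)<a\}}$ and $lk^+(v)=\overline{\{x\in lk(v):f(x)>a\}}$ are closures of unions of connected components of $lk(v)\setminus (f^{-1}(a)\cap lk(v))$, with $lk^-(v)\cup lk^+(v)=lk(v)$ and $lk^-(v)\cap lk^+(v)=f^{-1}(a)\cap lk(v)$. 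The cases $k=0$ and $k=d$ (a local minimum or maximum, where $f^{-1}(a)\cap lk(v)=\emptyset=S^{k-1}\times S^{d-k-1}$) are immediate, so I assume $1\le k\le d-1$; then H-criticality of index $k$ gives, via the excision/exact-sequence isomorphism of Definition~\ref{critical}, $\widetilde{H}_{k-1}(lk^-(v))\cong \widetilde{H}_k(f_a,f_a\setminus\{v\})\ne 0$, so $lk^-(v)\ne\emptyset$, and also $lk^+(v)\ne\emptyset$ (otherwise $lk^-(v)=lk(v)\cong S^{d-1}$, whose $\widetilde{H}_{k-1}$ vanishes for $k\le d-1$).

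For the forward implication, assume $v$ is non-degenerate critical of index~$k$. By the Lemma, $lk^-(v)$ is a regular neighborhood $N$ of an unknotted $(k-1)$-sphere in $lk(v)\cong S^{d-1}$. By uniqueness of regular neighborhoods up to ambient PL isotopy and the definition of ``unknotted'', the pair $(S^{d-1},N)$ is PL-homeomorphic to the standard pair $(S^{d-1},S^{k-1}\times D^{d-k})$ (the standard tube around the equatorial $S^{k-1}$), which restricts to a PL-homeomorphism of $(S^{d-1},\partial N)$ with $(S^{d-1},S^{k-1}\times S^{d-k-1})$. Since $f^{-1}(a)\cap lk(v)=\partial N$, this is precisely the asserted embedding.

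For the converse, assume the embedding of $f^{-1}(a)\cap lk(v)$ into $lk(v)$ is PL-homeomorphic to the standard embedding $S^{k-1}\times S^{d-k-1}\hookrightarrow S^{d-1}$ bounding a regular neighborhood of an unknotted $S^{k-1}$. Using $S^{d-1}=\partial(D^{k}\times D^{d-k})=(S^{k-1}\times D^{d-k})\cup(D^{k}\times S^{d-k-1})$, this embedded sphere separates $S^{d-1}$ into exactly two regions whose closures are $S^{k-1}\times D^{d-k}$, a regular neighborhood of the unknotted $S^{k-1}\times\{0\}$, and $D^{k}\times S^{d-k-1}$, a regular neighborhood of the unknotted $\{0\}\times S^{d-k-1}$ (the cores are unknotted: trivially in codimension $\le 2$, and by Zeeman's unknotting theorem in codimension $\ge 3$). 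By the first paragraph, together with $lk^-(v)\ne\emptyset\ne lk^+(v)$, each of $lk^-(v),lk^+(v)$ is the closure of exactly one of these two components; in particular $lk^-(v)$ is PL-homeomorphic to $S^{k-1}\times D^{d-k}\simeq S^{k-1}$ or to $D^{k}\times S^{d-k-1}\simeq S^{d-k-1}$. Now invoke H-criticality of index $k$, i.e.\ $\widetilde{H}_{k-1}(lk^-(v))\ne 0$: if $d\ne 2k$ then $\widetilde{H}_{k-1}(S^{d-k-1})=0$, so $lk^-(v)$ must be the first piece, a regular neighborhood of an unknotted $(k-1)$-sphere; and if $d=2k$ then both complementary pieces are regular neighborhoods of unknotted $(k-1)$-spheres, so $lk^-(v)$ is such a neighborhood regardless. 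In all cases the Lemma then yields that $v$ is non-degenerate critical of index $k$.

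The only genuinely delicate point is the self-dual case $d=2k$: here the separating torus $S^{k-1}\times S^{k-1}$ has abstractly homeomorphic complementary sides, and homology alone cannot distinguish them, so one cannot use the H-index to pick out $lk^-(v)$. This is harmless, however, precisely because both sides are already regular neighborhoods of unknotted spheres, so the conclusion of the Lemma holds no matter which side $lk^-(v)$ turns out to be. Everything else is routine bookkeeping with the identities $lk^-(v)\cup lk^+(v)=lk(v)$ and $lk^-(v)\cap lk^+(v)=f^{-1}(a)\cap lk(v)$, the standard structure of regular neighborhoods of unknotted spheres, and the already-established Lemma.
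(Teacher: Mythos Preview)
Your proof is correct and follows exactly the route the paper intends: the corollary is stated in the paper without proof as an immediate consequence of the preceding Lemma on non-degenerate critical points, and your derivation---identifying $f^{-1}(a)\cap lk(v)$ with $\partial(lk^-(v))$, using the standard handle decomposition $S^{d-1}=(S^{k-1}\times D^{d-k})\cup(D^k\times S^{d-k-1})$, and invoking the H-criticality hypothesis to resolve the $k$ versus $d-k$ ambiguity---is precisely that consequence spelled out, including the paper's own remark that without H-criticality one only gets index $k$ or $d-k$. One small correction: your parenthetical ``the cores are unknotted: trivially in codimension $\le 2$, and by Zeeman's unknotting theorem in codimension $\ge 3$'' is both unnecessary and wrong (codimension~$2$ is exactly where knotting occurs); the cores $S^{k-1}\times\{0\}$ and $\{0\}\times S^{d-k-1}$ are unknotted simply because you are working in the \emph{standard} join model $S^{d-1}=S^{k-1}*S^{d-k-1}$, transported to $lk(v)$ via the hypothesized PL homeomorphism of pairs.
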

Note that without the assumption of H-criticality,
the criterion still implies that $v$ is
non-degenerate critical with index~$k$ or index~$d-k$.

\begin{lem} {\rm (Morse Lemma)}

Let $f \colon M \to \mathbb{R}$ be a PL Morse function and assume
that there are no critical points with $f$-values in the interval $[a,b]$.
Then $f_a$ and $f_b$ are PL homeomorphic to each other, and 
$f^{-1}([a,b])$ is PL homeomorphic with the ``collar'' $f^{-1}(a) \times [a,b]$.

\end{lem}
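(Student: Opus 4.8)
The plan is to reduce the statement to the local picture at each vertex and then concatenate. First I would partition the interval $[a,b]$ at the (finitely many) vertex values $a = c_0 < c_1 < \dots < c_m = b$ lying in $[a,b]$; since $f$ is generic PL, between consecutive values there are no vertices. On each subinterval $[c_{i-1}+\varepsilon, c_i-\varepsilon]$ that contains no vertex value, Lemma~\ref{deformation-retract} already gives that $f_{c_{i-1}+\varepsilon}$ is a strong deformation retract of $f_{c_i-\varepsilon}$, but for the present statement I need the stronger conclusion that the two sublevels are PL \emph{homeomorphic} and that the preimage is a PL collar. This I would obtain from the PL isotopy construction referenced after Lemma~\ref{deformation-retract} (the ``more technical effort'' version that is piecewise linear jointly in all variables, \cite[Section~4.2.3]{Gr}): pushing down along the PL gradient-like flow on a slab containing no vertex yields a PL homeomorphism $f^{-1}([s,t]) \cong f^{-1}(s) \times [s,t]$ commuting with $f$.

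The heart of the argument is therefore the behavior as one crosses a single vertex value $c_i = f(v)$ for a strongly regular vertex $v$ (the hypothesis guarantees every vertex in the slab is strongly regular, since there are no critical points with $f$-values in $[a,b]$). Here I would invoke Lemma~\ref{strongly regular}: $lk^-(v)$ is a PL $(d-1)$-ball, hence by the characterization preceding it, there is a PL chart around $v$ in which $f(x_1,\dots,x_d) = f(v) + x_d$. In such a chart the level sets are genuinely parallel hyperplane slices, so locally $f^{-1}([c_i-\varepsilon, c_i+\varepsilon])$ is a PL product and crossing the vertex changes nothing; one then glues this local product structure to the product structure on the complement of the star of $v$ (where no vertex value is crossed, so the previous paragraph applies) using a PL partition-of-unity / straightening argument to get a global PL homeomorphism $f_{c_i-\varepsilon} \cong f_{c_i+\varepsilon}$ together with the collar $f^{-1}([c_i-\varepsilon,c_i+\varepsilon]) \cong f^{-1}(c_i-\varepsilon)\times[c_i-\varepsilon,c_i+\varepsilon]$. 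If several strongly regular vertices share a slab (they will not, by genericity, but one could also have several with nearby values), one treats their disjoint stars independently.

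Finally I would compose the PL homeomorphisms across all the subintervals and across all the vertex levels to obtain a single PL homeomorphism $f_a \cong f_b$, and concatenate the collars to get $f^{-1}([a,b]) \cong f^{-1}(a)\times[a,b]$ with the second coordinate equal to $f$ up to the affine reparametrization $t\mapsto t$. I expect the main obstacle to be the gluing step: matching the local product structure near each strongly regular vertex with the flow-induced product structure on the rest of $M$ so that the resulting map is PL everywhere, not merely topological. This is exactly the kind of technical issue handled by the joint-PL isotopy theorem of \cite[Section~4.2.3, Lemma 4.13 and Theorem 4.20]{Gr}, and I would lean on that machinery rather than rebuild it; the remaining bookkeeping — finiteness of vertex values in $[a,b]$, disjointness of stars, compatibility of collars at the seams — is routine.
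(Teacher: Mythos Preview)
Your proposal is correct and follows the same outline as the paper's own argument, which is not given immediately after the lemma but is sketched in Section~\ref{isotopy}: partition $[a,b]$ at the vertex values, handle vertex-free slabs by a direct PL product construction, cross each strongly regular vertex separately, and defer to \cite[Section~4.2.3]{Gr} for the joint-PL technicalities.

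The one place where the paper is more concrete is precisely the gluing step you flag as the main obstacle. Rather than invoking the chart and a ``PL partition-of-unity / straightening argument'' (partition of unity is not really a PL tool), Section~\ref{isotopy} first builds the isotopy on $M\setminus (st(v))^\circ$; on the link this restricts to a PL homeomorphism between the boundary spheres of the two balls $(st(v)\cap f^{-1}(b))\times\{a\}$ and $st(v)\cap f^{-1}(a)$. Since both are PL $(d-1)$-balls (this is where strong regularity of $v$ enters, via Lemma~\ref{strongly regular}), the boundary homeomorphism extends to a PL homeomorphism of the balls, and a final cone from $(v,b)$ to $v$ fills in the remainder of the star. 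This extend-to-the-ball-then-cone construction is the explicit PL mechanism that replaces your vaguer gluing step; otherwise the two arguments coincide.
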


\begin{cor} {\rm (Morse relations, duality)}

Let $f \colon M \to \mathbb{R}$ be a PL Morse function on a compact 
PL manifold $M$, and let $\mu_k(f)$
be the number of critical vertices of index $k$, then the \emph{Morse inequality}
\begin{equation}
\mu_k(f) \geq {\rm rk} H_k(M;\mathbb{F})
\end{equation}
holds for any $k$ and any field $\mathbb{F}$.
Moreover we have the \emph{Euler-Poincar\'{e} equation}
$$\sum_k(-1)^k\mu_k(f) = \chi(M)$$ 
and the \emph{duality}
$$\mu_{d-k}(f) = \mu_k(-f).$$
For a perfect function, $$\mu_k(f) = {\rm rk} H_k(M;\mathbb{F})$$
for all $k$.
This notion depends on the choice of $\mathbb{F}$.
\end{cor}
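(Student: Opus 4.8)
The plan is to deduce everything from Theorem~\ref{Morse duality}, once the two meanings of $\mu_k(f)$ have been matched. In Theorem~\ref{Morse duality}, $\mu_k(f)$ is the weighted count $\sum_i {\rm rk}\, H_k(f_{a_i}, f_{a_i}\setminus\{v_i\};\mathbb{F})$, whereas here $\mu_k(f)$ is the plain number of non-degenerate critical vertices of index $k$. So the first step is to check these agree. Every vertex of a PL Morse function is, by Definition~\ref{Morse function}, either strongly regular or non-degenerate critical. If $v$ is strongly regular, then by Lemma~\ref{strongly regular} $lk^-(v)$ is a PL $(d-1)$-ball, hence contractible, so $\widetilde{H}_j(lk^-(v)) = 0$ for all $j$; by the isomorphisms displayed after Definition~\ref{critical} this gives $\widetilde{H}_k(f_a, f_a\setminus\{v\}) = 0$ for $k\geq 1$, while the degree-$0$ term vanishes too because the ball $lk^-(v)$ is nonempty (for $d\geq 1$), so $v$ has neighbours below its level and $\{v\}$ is not a connected component of $f_a$. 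Thus a strongly regular vertex contributes $0$ to every rank. If $v$ is non-degenerate critical of index $k$, Definition~\ref{Morse function} records that ${\rm rk}\, H_k(f_a, f_a\setminus\{v\};\mathbb{F}) = 1$ and all other ranks vanish. Summing over vertices, the two quantities $\mu_k(f)$ coincide, for every field $\mathbb{F}$.

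With this identification, the Morse inequality $\mu_k(f)\geq {\rm rk}\, H_k(M;\mathbb{F})$ and the Euler--Poincar\'e relation $\sum_k(-1)^k\mu_k(f) = \chi(M)$ are just \eqref{Morse-inequality} and the equation following it in Theorem~\ref{Morse duality}. For a perfect function in the sense of Definition~\ref{Morse function} — crossing a critical level of index $k$ never has the effect $H_{k-1}(f_{a-\epsilon})\cong H_{k-1}(f_{a+\epsilon})\oplus\mathbb{F}$ — one argues along the filtration by sublevel sets: between consecutive critical levels the homology of the sublevel does not change (Lemma~\ref{deformation-retract}), and across a critical vertex of index $k$ the only admissible change adds one generator to $H_k$; hence no class is ever destroyed, each index-$k$ critical vertex raises ${\rm rk}\, H_k$ by exactly $1$, and at the top level ${\rm rk}\, H_k(M;\mathbb{F}) = \mu_k(f)$. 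The same bookkeeping shows this notion of perfectness is equivalent to equality in the Morse inequalities for all $k$, and, since the ranks ${\rm rk}\, H_k(M;\mathbb{F})$ change with $\mathbb{F}$ when $M$ has torsion, that perfectness is a field-dependent property.

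For the duality $\mu_{d-k}(f) = \mu_k(-f)$ I would first observe that $-f$ is again a PL Morse function. Indeed $lk^-_{-f}(v) = lk^+_f(v)$ and $lk^+_{-f}(v) = lk^-_f(v)$, and by the corollary following Lemma~\ref{strongly regular} strong regularity of $v$ means that $(lk(v), f^{-1}(a)\cap lk(v))$ is an unknotted $(d-1,d-2)$-sphere pair, a condition symmetric in the two sides; so $v$ is strongly regular for $f$ iff it is for $-f$. If $v$ is non-degenerate critical of index $k$ for $f$, then in the chart of Definition~\ref{Morse function} the function $-f$ takes the form $-f(v) + |x_1| + \cdots + |x_k| - |x_{k+1}| - \cdots - |x_d|$, which is the normal form of a non-degenerate critical point of index $d-k$. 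Summing these vertex-by-vertex correspondences gives $\mu_{d-k}(-f) = \mu_k(f)$, i.e. $\mu_{d-k}(f) = \mu_k(-f)$; this is also what one obtains by summing over all vertices the isomorphism $\widetilde{H}_{d-k}(f^a, f^a\setminus\{v\})\cong\widetilde{H}_k(f_a, f_a\setminus\{v\})$ of Theorem~\ref{Morse duality}, after identifying the superlevel sets of $f$ with the sublevel sets of $-f$.

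The only places that need genuine care are the two local statements — that a strongly regular vertex really is H-regular, \emph{including} in degree $0$ so that it contributes nothing, and that the strongly-regular\,/\,non-degenerate-critical dichotomy is preserved under $f\mapsto -f$ — and both are settled at once by the link-level characterizations of Section~\ref{sec:our-critical-points}, which are symmetric in $lk^-(v)$ and $lk^+(v)$. Once these are in hand there is no remaining obstacle: the corollary is a direct transcription of Theorem~\ref{Morse duality}.
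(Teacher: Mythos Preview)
Your proof is correct and follows the same approach as the paper, which simply states ``This follows from Theorem~\ref{Morse duality}.'' You have done the work the paper leaves implicit: verifying that for a PL Morse function the weighted count $\sum_i \operatorname{rk} H_k(f_{a_i},f_{a_i}\setminus\{v_i\};\mathbb{F})$ of Theorem~\ref{Morse duality} coincides with the plain count of non-degenerate critical vertices of index~$k$, and checking that the strongly-regular/non-degenerate dichotomy is preserved under $f\mapsto -f$; once these are in hand the corollary is indeed a direct transcription, and your treatment of the perfect-function clause via the sublevel filtration is a reasonable elaboration of what is recorded in Definition~\ref{Morse function}.
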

This follows from Theorem~\ref{Morse duality}.

\begin{cor} {\rm (Reeb theorem, \cite{Ko})}

Let $M$ be a compact PL $d$-manifold and $f \colon M \to \mathbb{R}$
be a PL Morse function with exactly two critical vertices.
Then $M$ is PL homeomorphic to the sphere $S^d$. 

\end{cor}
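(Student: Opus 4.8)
The plan is to exhibit $M$ as a union of two PL $d$-balls glued along their common boundary $(d-1)$-sphere, and then to recognise any such ``twisted sphere'' as the standard $S^d$ by means of the PL Alexander trick. The case $d=0$ is immediate (two points), so assume $d\ge 1$. First I would identify the two critical vertices. Since $f$ is generic PL, its global minimum is attained at a single vertex $v_0$, which is H-critical of index $0$; as $f$ is a PL Morse function and H-critical vertices are not strongly regular, $v_0$ is therefore non-degenerate critical of index $0$. Applying the same reasoning to $-f$ (or using the duality of Theorem~\ref{Morse duality}), the global maximum of $f$ is attained at a non-degenerate critical vertex $v_1$ of index $d$, and $v_0\neq v_1$. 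Since $f$ has exactly two critical vertices, these must be $v_0$ and $v_1$; in particular every vertex whose $f$-value is strictly between $\min f$ and $\max f$ is strongly regular.

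Next I would fix levels $a,b$ with $\min f<a<b<\max f$ such that $\min f$ is the only vertex value that is $\le a$ and $\max f$ is the only vertex value that is $\ge b$. Then $v_0$ is the only vertex of $f_a$, whence $f_a\subseteq st(v_0)=v_0*lk(v_0)$, and in fact $f_a=v_0*\bigl(f^{-1}(a)\cap st(v_0)\bigr)$, the linear cone from $v_0$ over the level slice. Since $lk(v_0)$ is a PL $(d-1)$-sphere and the slice $f^{-1}(a)\cap st(v_0)$ is PL homeomorphic to it, $f_a$ is a PL $d$-ball with boundary the PL $(d-1)$-sphere $f^{-1}(a)$; the symmetric argument at the maximum shows that $f^b$ is a PL $d$-ball with boundary the PL $(d-1)$-sphere $f^{-1}(b)$. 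The interval $[a,b]$ contains no critical vertex, so by the Morse Lemma above there is a PL homeomorphism $f^{-1}([a,b])\cong f^{-1}(a)\times[a,b]$. Absorbing this collar into $f_a$ shows that $f_b=f_a\cup f^{-1}([a,b])$ is again a PL $d$-ball, with boundary $f^{-1}(b)$. Hence $M=f_b\cup f^b$ is the union of two PL $d$-balls meeting exactly along their common boundary sphere $\Sigma:=f^{-1}(b)$.

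It then remains to prove the general fact that a closed manifold $N=B_1\cup B_2$ obtained by gluing two PL $d$-balls along a PL homeomorphism of their boundary $(d-1)$-spheres is PL homeomorphic to $S^d$. Write $S^d=B_-\cup B_+$ as the union of two standard $d$-balls along the identity of their equatorial sphere $S^{d-1}$, and fix PL homeomorphisms $\phi\colon B_1\to B_-$ and $\psi_0\colon B_2\to B_+$. Then $h:=(\phi|_{\Sigma})\circ(\psi_0|_{\Sigma})^{-1}$ is a PL self-homeomorphism of the equatorial $S^{d-1}$. By the PL Alexander trick --- a PL homeomorphism of $\partial\Delta^d$ (after a suitable subdivision, a simplicial one) extends by coning to a PL homeomorphism of $\Delta^d$, cf.\ \cite{RS} --- the map $h$ extends to a PL homeomorphism $\theta\colon B_+\to B_+$. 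Now $\phi$ and $\theta\circ\psi_0$ agree on the gluing sphere $\Sigma$, so they combine to a PL homeomorphism $N\to S^d$. Applying this to $N=M$ completes the proof.

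The only genuinely delicate point is this last step, and it is exactly where the PL theory parts company with the smooth one: a smooth homeomorphism of $S^{d-1}$ need not extend to a diffeomorphism of the ball --- this is precisely what produces exotic spheres as twisted spheres --- whereas coning is an intrinsically PL operation and no such obstruction arises. This is why, unlike in the smooth setting, the PL Reeb theorem requires no restriction on the dimension.
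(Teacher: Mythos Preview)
Your proof is correct and follows essentially the same route as the paper: identify the two critical vertices as the minimum and maximum, recognise the bottom and top sublevels as PL $d$-balls (cones over the link sphere), use the Morse Lemma to see the middle as a cylinder, and conclude that $M$ is two balls glued along their boundary sphere. The only difference is that you spell out the final gluing step via the PL Alexander trick explicitly, whereas the paper's proof absorbs this into the phrase ``putting the three parts together'' and relegates the discussion to the subsequent Remark; your version is the more careful of the two on this point.
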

\begin{proof} 
  Since the minimum $p$ and maximum $q$ 
are always critical the assumption can
be reformulated by saying that any point between 
minimum and maximum is strongly regular.
Let us consider the restriction 
$$f_| \colon M \setminus \{p,q\} \to \mathbb{R}$$ 
without critical points.
For any level $f^{-1}(c)$ with $f(p) <  c < f(q)$ the Morse lemma tells us that
there is an $\epsilon > 0$ such that $f^{-1}(c-\epsilon,c+\epsilon)$
is PL homeomorphic with the cylinder $f^{-1}(c) \times (-\epsilon,\epsilon)$.
Furthermore there is a $\delta > 0$ such that 
$f^{-1}[f(p),f(p)+\delta]$ and $f^{-1}[f(q) - \delta,f(q)]$
are PL homeomorphic with $d$-balls.
Consequently $f^{-1}(f(p)+\delta)$ and $f^{-1}(f(p)-\delta)$
are PL homeomorphic with the $(d-1)$-sphere.
This implies that $f^{-1}[f(p)+\delta,f(q)-\delta]$
is PL homeomorphic with the cylinder 
$$f^{-1}(c) \times [p+\delta,q-\delta]
\cong S^{d-1} \times [p+\delta,q-\delta].$$
Putting the three parts together we see that 
$M$ is PL homeomorphic with the $d$-sphere~$S^d$.
\end{proof}

\medskip
{\sc Remark:} (a) In the smooth theory the same kind of proof leads only to
a homeomorphism to the standard $S^d$ but not to a diffeomorphism.
There are exotic 7-spheres admitting a Morse function with two
critical points, thus providing a counterexample.
By contrast it is well known that the $d$-sphere ($d \not = 4$) admits
a unique PL structure \cite[Thm.7]{Luft}. 
Therefore this problem could occur only for $d=4$.
But gluing together two standard 4-balls along their boundaries
leads to the standard 4-sphere. Therefore the proof above gives
a PL homeomorphy even for $d=4$.

(b) For the case of compact PL manifolds admitting a PL Morse function 
with exactly three critical points see \cite{EK}. 
The only possibilities occur in dimensions $d = 2,4,8,16$
with an intermediate critical point of index $k = 1,2,4,8$, respectively.

\medskip
{\sc Consequence:} (1) If there is an exotic PL 4-sphere then any PL Morse
function on it must have at least four critical points.

(2) If $M$ is a homology sphere that is not a sphere, then any PL Morse 
function $f$ on $M$ has at least six critical points. Consequently,
it cannot admit a perfect function.

\medskip
\begin{proof}[Proof of (2)] $M$ has a non-trivial fundamental group with a trivial
commutator factor group. Therefore $f$ must have a critical point of
index 1. This leads to a free fundamental group in the critical sublevel $f_a$.
If a critical point of index 2 introduces a relation in that group,
the quotient will be abelian. A non-abelian group requires a second generator,
and this requires a second critical point of index 1. Since the
fundamental group is not free, there must be a critical point of index
2 introducing a relation between the generators. 
By the Euler relation the number of
critical points must be even, so there are two critical points of index 1, 
minimum and maximum and two others.
\end{proof}

\medbreak
{\sc Example:} (3 critical points)
\nobreak

For the unique (and 3-neighborly and tight) 9-vertex triangulation 
of the complex 
projective plane \cite[Sect.~4B]{Ku2} any generic PL function 
assigning distinct levels to the
9 vertices is a PL Morse function with three critical points:
minimum, maximum and a saddle point of index 2 in between.
Since 123 is a 2-face  of the triangulation,
for the special case $f(1) < f(2) < f(3) < f(4) < \cdots < f(9)$
the sublevel $f_a$ will be a 4-ball for $f(1) < a < f(4)$ 
and the complement of a 4-ball for $f(4) < a < f(9)$. 
Since 1234 is not a 3-face of the triangulation, 
the critical sublevel $f_{f(4)}$
consists of the boundary of the tetrahedron spanned by 1234 extended
by sections through all 4-simplices except 56789. 

\medskip
{\sc Example:} (4 critical points)

There is a highly symmetric (and 3-neighborly and tight) 
13-vertex triangulation of the simply 
connected 5-manifold $M^5 = SU(3)/SO(3)$ \cite[Ex.5\_13\_3\_2]{Lu1}.
Any generic PL function assigning distinct values to the 13 vertices
will have total multiplicity 4, for special choices it will 
be a PL Morse function with minimum, maximum one saddle point 
of index 2 and one of index 3.
Since 135 is a 2-face of the triangulation, 
for a beginning sequence with $f(1) < f(3) < f(5) < f(7)$
any sublevel $f_a$ will be a 5-ball for $f(1) < a < f(7)$, 
the first critical level is $b = f(7)$ since 1357 is not a 3-face.
Again $f_b$ will be
the boundary of the tetrahedron 1357 extended by sections through 5-simplices.
According to $H_2(M^5;\mathbb{Z}) \cong \mathbb{Z}_2$ this empty tetrahedron 
1357 generates the second homology but twice the generator is homologous
to zero. 
Clearly 7 will be a saddle point for $f$ of index 2. 
However we extend this sequence,
by the Morse inequality $H_3(M^5;\mathbb{Z}_2) \cong \mathbb{Z}_2$ 
implies that there must be a critical point of index 3 also.


\section{Isotopy}
\label{isotopy}
We have mentioned after
Lemma~\ref{deformation-retract}
that successive level sets can be connected by an isotopy if there is no
vertex between them.
Such an isotopy can be used for visualization,
by putting some texture on the level sets in order to make it
clear how a level set moves as the level changes.

Fro{m} an application viewpoint, there are also quantitative aspects
that play a role here. One might look for isotopies that deform the level
sets as little as possible and that are PL while using few additional vertices.
Some results in this direction are given in~\cite[Section~6.2]{Gr}.

But already establishing the mere existence of a PL isotopy,
in particular for the case when
the level set passes over a strongly regular vertex,
is not a trivial matter.
As suggested in \cite{Rote},
such a PL isotopy can be represented by a PL homeomorphism
\[
\phi\colon f^{-1}(b) \times [a,b] \to f^{-1}[a,b]
\]
sucht that $f(\phi(x,t)) = t$ holds for all arguments.
We sketch an existence proof following \cite[Section 4.2.3]{Gr}.

If $f^{-1}[a,b]$ contains no vertices,
$f^{-1}(b) \times [a,b]$ and $f^{-1}[a,b]$
are combinatorially equivalent polytopal complexes.
Triangulating these complexes by starring
at each vertex in corresponding orders
yields combinatorially equivalent simplicial complexes
and hence a PL homeomorphism by
simplexwise linear interpolation.

It suffices to consider intervals $[a,b]$
such that $f^{-1}[a,b]$ contains a single regular vertex~$v$
with $f$-value $a$ or $b$.
Since the case $f(v) = a$ can be treated analogously,
we assume $f(v) = b$.

First, apply the isotopy construction for intervals without vertices
outlined above for $M \setminus (st(v))^\circ$, that is,
$M$ with the open star of $v$ removed.
This isotopy restricts to a PL homeomorphism 
fro{m} $(lk(v) \cap f^{-1}(b)) \times \{a\}$ to $lk(v) \cap f^{-1}(a)$.
Since $v$ is regular,
$(st(v) \cap f^{-1}(b)) \times \{a\}$ is a ball
bounded by the sphere $(lk(v) \cap f^{-1}(b)) \times \{a\}$
and $st(v) \cap f^{-1}(a)$ is a ball
bounded by the sphere $lk(v) \cap f^{-1}(a)$.
The PL homeomorphism between the boundary spheres can be extended
to a PL homeomorphism between
the balls $(st(v) \cap f^{-1}(b)) \times \{a\}$ and $st(v) \cap f^{-1}(a)$.
This PL homeomorphism matches on $(lk(v) \cap f^{-1}(b)) \times \{a\}$
with the isotopy on the deletion of $v$.
Therefore we obtain a PL-homeomorphism
between
$(((M \setminus (st(v))^\circ) \cap f^{-1}(b))\times[a,b]) \cup (st(v) \cap f^{-1}(b)) \times \{a\}$
and
$((M \setminus (st(v))^\circ) \cap f^{-1}[a,b]) \cup (st(v) \cap f^{-1}(a))$
Now $(st(v) \cap f^{-1}(b))\times[a,b]$
can be considered as a cone on 
$((lk(v) \cap f^{-1}(b))\times[a,b]) \cup (st(v) \cap f^{-1}(b)) \times \{a\}$
with apex $(v,b)$,
and $st(v) \cap f^{-1}[a,b]$ as a cone on
$(lk(v) \cap f^{-1}[a,b]) \cup (st(v) \cap f^{-1}(a))$ with apex $v$.
Thus a cone construction defined by mapping $(v,b)$ to $v$ and 
interpolating between apices and bases
extends the given PL homeomorphism to a PL homeomorphism between
$f^{-1}(b) \times [a,b]$ and $f^{-1}[a,b]$
as desired.

\section{Manifolds with boundary}
\label{sec:with-boundary}

The classical Morse theory was extended to smooth manifolds with boundary
$(M,\partial M)$ in \cite{Braess}. Here a \emph{Morse function} is defined
as a smooth function having only non-degenerate critical points in
$M \setminus \partial M$ and no critical points on $\partial M$, i.e.,
${\rm grad} f \not = 0$ on $\partial M$.
Furthermore the restriction $f|_{\partial M}$ is assumed to be a Morse
function on $\partial M$.

\begin{defi}\label{braess}
A critical point $p$ of $f|_{\partial M}$ is called \emph{$(+)$-critical for $f$}
if ${\rm grad} f|_p$ is an interior vector on $M$ (pointing into $M$). 
It is called \emph{$(-)$-critical for $f$}
if ${\rm grad} f|_p$ is an exterior vector on $M$ (pointing away from $M$). 
\end{defi}

\begin{pro} {\rm (Braess \cite{Braess})}\label{braess1}

Let $M$ be a compact smooth manifold with boundary, and 
let $\mu^+(f)$ and $\mu^-(f)$ denote the number of $(+)$- and $(-)$-critical
points. Only the $(+)$-critical points are H-critical and change the sublevel
by attaching a cell, the $(-)$-critical points are H-regular.
Moreover $f_{a-\epsilon}$ is a deformation retract of $f_{a+\epsilon}$ if
$f^{-1}[a-\epsilon,a+\epsilon]$ contains only a $(-)$-critical point on
$\partial M$ and no critical point in $M \setminus \partial M$. 
Then the Morse inequality reads as 
$$\mu(f|_{M \setminus \partial M}) + \mu^+(f) \geq {rk} H_*(M).$$
Moreover by duality on the boundary one has
$$\mu^+(f) + \mu^-(f) = \mu(f|_{\partial M}) \geq {\rm rk} H_*(\partial M).$$
However, there is no duality on $M$ since a point is $(+)$-critical
for $f$ if and only if it is $(-)$-critical for $-f$.
\end{pro}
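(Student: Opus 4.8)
The plan is to follow the classical scheme of smooth Morse theory in the style of Milnor, carefully adapted near the boundary, and to reduce everything to local normal forms. The first step is to establish a normal form at each critical point $p$ of $f|_{\partial M}$. Set $f(p)=c$ and let $k$ be the index of $p$ as a critical point of $f|_{\partial M}$. Since $\mathrm{grad}\,f\neq 0$ on $\partial M$ while $df|_{\partial M}(p)=0$, the vector $\mathrm{grad}\,f|_p$ is normal to $\partial M$, hence points strictly inward or strictly outward; this is exactly the $(+)$/$(-)$ dichotomy of Definition~\ref{braess}. Pick a collar $\partial M\times[0,\delta)$; reparametrizing the collar coordinate $s$ by $s\mapsto f(\cdot,s)-f|_{\partial M}$ (which is a legitimate change of coordinates near $p$ because $f$ is monotone in the transverse direction there) makes $f$ equal to $f|_{\partial M}\pm$ the new transverse coordinate, and then the Morse lemma applied to $f|_{\partial M}$ gives coordinates $(x_1,\dots,x_{d-1},x_d)$ on a half-ball $\{x_d\ge 0\}$ with
\[
 f = c - x_1^2-\cdots-x_k^2 + x_{k+1}^2+\cdots+x_{d-1}^2 \;\pm\; x_d ,
\]
the sign being $+$ in the $(+)$-critical case and $-$ in the $(-)$-critical case.

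Next I would treat the $(-)$-critical points, local model $f=c+Q_k(x')-x_d$ on $\{x_d\ge 0\}$, where $Q_k$ is the indefinite form above. Increasing $x_d$ is admissible (it stays in $M$) and strictly decreases $f$, so \emph{pushing into the interior} retracts $\{f\le c+\epsilon\}$ into $\{f\le c-\epsilon\}$; a short fibrewise computation shows both local sublevels project onto the same $x'$-region with contractible interval fibres, so $H_*(f_c,f_c\setminus\{p\};\mathbb{F})=0$ and $p$ is H-regular. Globally, on an interval $[c-\epsilon,c+\epsilon]$ whose preimage contains only $(-)$-critical points on $\partial M$ and no interior critical point, one builds a vector field $X$ on $f^{-1}[c-\epsilon,c+\epsilon]$ with $X(f)<0$ whose flow stays in $M$: away from the collar take a normalization of $-\mathrm{grad}\,f$; inside the collar near a $(-)$-critical point add an inward $(+x_d)$ component so that $X$ is tangent to, or points into, $\partial M$ while still decreasing $f$; patch with a partition of unity. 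Integrating $X$ yields the deformation retraction of $f_{c+\epsilon}$ onto $f_{c-\epsilon}$ asserted in the proposition. The same construction, with $X$ merely tangent to $\partial M$, handles a regular interval containing no critical point of $f$ in the interior and none of $f|_{\partial M}$.

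I would then treat the $(+)$-critical points, local model $f=c+Q_k(x')+x_d$ on $\{x_d\ge 0\}$. Now decreasing $x_d$ decreases $f$, so the radial-in-$x_d$ homotopy $(x',x_d)\mapsto(x',(1-t)x_d)$ deformation retracts $\{f\le c+\epsilon\}$ onto $\{x_d=0,\ Q_k(x')\le\epsilon\}$ and, by the same formula, $\{f\le c-\epsilon\}$ onto $\{x_d=0,\ Q_k(x')\le-\epsilon\}$. By excision, $H_*(f_{c+\epsilon},f_{c-\epsilon};\mathbb{F})$ is then the relative homology of the standard index-$k$ Morse pair in dimension $d-1$, namely $\mathbb{F}$ concentrated in degree $k$; so passing a $(+)$-critical point of boundary-index $k$ changes the sublevel exactly by attaching a $k$-cell, and among the critical points of $f|_{\partial M}$ only the $(+)$-critical ones are H-critical. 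Combining this with the classical interior analysis (each interior critical point of index $k$ attaches a $k$-cell) and the retractions over regular intervals from the previous step, $M$ is built up to homotopy by attaching one cell per interior critical point and one $k$-cell per $(+)$-critical point of index $k$; subadditivity of $\mathrm{rk}\,H_*$ along such a handle filtration gives $\mu(f|_{M\setminus\partial M})+\mu^+(f)\ge\mathrm{rk}\,H_*(M;\mathbb{F})$, and the alternating count gives the Euler characteristic identity.

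Finally, the boundary duality is formal: every critical point of $f|_{\partial M}$ is either $(+)$- or $(-)$-critical, so $\mu^+(f)+\mu^-(f)=\mu(f|_{\partial M})$, and the ordinary Morse inequality applied to $f|_{\partial M}$ on the closed manifold $\partial M$ gives $\mu(f|_{\partial M})\ge\mathrm{rk}\,H_*(\partial M;\mathbb{F})$. The absence of a Poincar\'e-type duality on $M$ itself is just the remark that $\mathrm{grad}(-f)=-\mathrm{grad}\,f$, so $p$ is $(+)$-critical for $f$ precisely when it is $(-)$-critical for $-f$, and hence $\mu^+(-f)=\mu^-(f)$ need not equal $\mu^+(f)$. \textbf{Main obstacle.} The delicate point is the global descending vector field near $(-)$-critical points: one needs $X$ to be tangent to (or inward at) $\partial M$ so its flow stays in $M$, yet still $X(f)<0$ at a point where the \emph{boundary} gradient of $f$ vanishes; this is exactly why the naive gradient-flow retraction fails for manifolds with boundary and why the inward-push together with the partition-of-unity patching is needed.
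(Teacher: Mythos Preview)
Your argument is correct and follows the standard Milnor-style approach: establish a local normal form $f=c+Q_k(x')\pm x_d$ on a half-space via a collar and the boundary Morse lemma, then for the $(-)$-case build an inward-pointing descending vector field (patched by a partition of unity) to obtain the deformation retraction, and for the $(+)$-case retract onto the boundary slice to reduce to the classical $(d-1)$-dimensional handle attachment.

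The paper itself does not give a proof at all; it simply refers the reader to Braess's original article (Satz~4.1 and Satz~7.1 there), adding only the remark that the hypothesis of Satz~4.1 should read ``no interior critical point and no $(+)$-critical boundary point in the interval''. So your write-up is strictly more than what the paper provides: you supply a self-contained argument where the paper delegates to the literature. Your proof is in the same spirit as Braess's (normal forms plus modified gradient flow), so there is no genuine methodological divergence---you have essentially reconstructed the cited argument. The one point worth tightening is the global vector-field patching away from the $(-)$-critical point: at boundary points where $\mathrm{grad}\,f$ happens to be inward, $-\mathrm{grad}\,f$ points outward, so one should use the tangential vector field $-\mathrm{grad}(f|_{\partial M})$ along $\partial M$ (extended over the collar) rather than the ambient $-\mathrm{grad}\,f$; this is the standard fix and is implicit in your ``tangent to, or points into, $\partial M$'' clause, but it would be clearer to say so explicitly.
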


For a proof see \cite[Satz 4.1 and Satz 7.1]{Braess}. In Satz 4.1 the 
assumption should be that the interval contains no critical point
in the interior and no $(+)$-critical point on the boundary.

\bigskip
In the case of a generic PL function we can directly apply
Definition~\ref{critical} with the following 
result for a vertex $v \in \partial M$
with $f(v) = a$ \cite{Ku0}:
$${\rm rk}H_*(f_a,f_a\setminus \{v\}) + {\rm rk}H_*(f^a,f^a\setminus \{v\}) 
\geq {\rm rk}H_*((f|_{\partial M})_a,(f|_{\partial M})_a \setminus \{v\})$$

{\sc Example:} Simple 2-dimensional examples show that the last inequality
is not always an equality:
It can happen that a boundary point is H-critical for $f$ but
H-regular for $f|_{\partial M}$. By integrating the number of critical
points over all directions of height functions we see that the
contribution of the boundary is half the integral over the boundary
separately in the smooth case and greater or equal to half this integral
in the PL case~\cite{Ku0}.

\bigskip
By combining the definitions for PL Morse functions in Section 4 with the 
ideas of Definition~\ref{braess} above we can formulate a theory of PL Morse
functions on manifolds with boundary as follows.

\begin{defi} \label{Morse function with boundary}
Let $M$ be a compact PL $d$-manifold with boundary 
and $f \colon M \to \mathbb{R}$ a generic PL function. 
Then $f$ is called a \emph{PL Morse function} if all interior vertices
are either non-degenerate critical or strongly regular
in the sense of Definition~\ref{Morse function}
and all vertices on $\partial M$ are either $(+)$-critical
or $(-)$-critical or strongly regular.

\smallskip
A point $p \in \partial M$ is called \emph{strongly regular} 
if there is a chart around $p$
such that $M$ is described by $x_1\le 0$
and the function $f$ can be used as the coordinates $x_d$
in $\partial M$, i.e., if in those coordinates
\begin{equation}
f(x_1, \ldots, x_d) = f(p) + x_d
\end{equation}
for $x_1\le 0$.
If in a concrete polyhedral decomposition of $M$ distinct vertices
have distinct $f$-values, then $f$ is also generic PL, and moreover 
all points are strongly regular except possibly the vertices.

\smallskip
A vertex $v\in \partial M$ is called \emph{non-degenerate $(+)$-critical}
(or \emph{$(-)$-critical}, respectively) 
if there is a PL chart with coordinates $x_1, \ldots, x_d$ 
around $v$
for which the set $M$ is described by the constraint
\begin{gather*}
x_d \geq - |x_1| - \cdots - |x_k| + |x_{k+1}| + \cdots + |x_{d-1}|
\\(\mbox{or } x_d \leq - |x_1| - \cdots - |x_k| + |x_{k+1}| + \cdots + |x_{d-1}|
\mbox{ respectively})  
\end{gather*}
 and the function $f$ can be expressed as
\begin{equation}
f(x_1, \ldots, x_d) = f(v) + x_d.
\end{equation}
See Figure~\ref{fig:boundary}
for an illustration.
In this case the boundary is represented by the equation
$$x_d = - |x_1| - \cdots - |x_k| + |x_{k+1}| + \cdots + |x_{d-1}|,$$
and the restriction $f|_{\partial M}$ is written as
\begin{equation}
f(x_1, \ldots, x_{d-1}) = f(v) - |x_1| - \cdots - |x_k| + |x_{k+1}| + \cdots + |x_{d-1}|,
\end{equation}
so $v$ is non-degenerate critical for $f|_{\partial M}$.
\end {defi}
\begin{figure}[htb]
  \centering
  \includegraphics{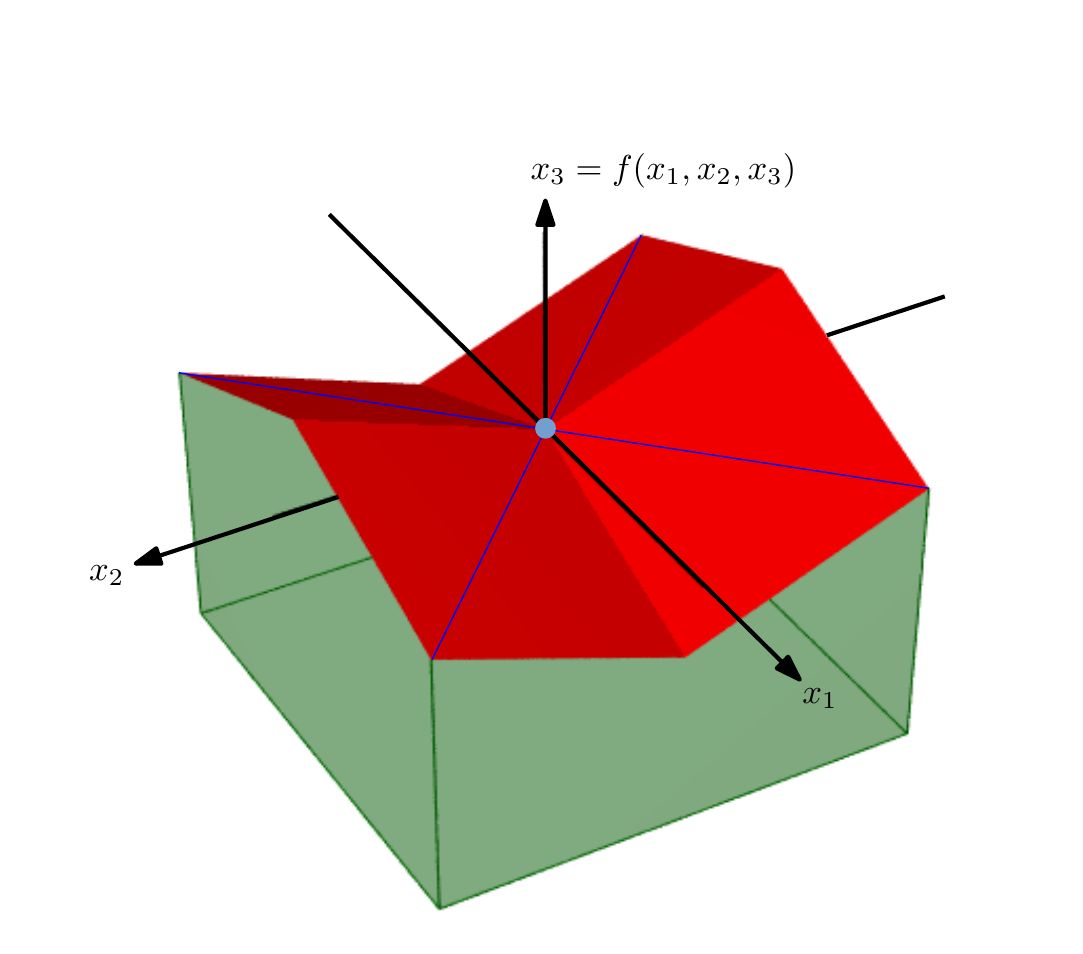}
  \caption{A non-degenerate critical point (blue) of index 1 on the boundary
    of a 3-manifold $M$. The boundary $\partial M$ is the corrugated
    red saddle surface.
    If $M$ consists of the volume unter the ``roof'', 
    as indicated by the green ``walls'', then this is a $(-)$-critical point.
    If $M$ lies above the red surface, then it is a $(+)$-critical
    point.
  The blue cross is the level set at the critical value.}
  \label{fig:boundary}
\end{figure}
\begin{cor}
In the situation of Definition~\ref{Morse function with boundary}
only $(+)$-critical points on the boundary are $H$-critical,
necessarily with multiplicity 1 and index $k$. 
Any $(-)$-critical point on the boundary is $H$-regular.
\end{cor}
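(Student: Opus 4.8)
The plan is to analyze the local model of a $(\pm)$-critical vertex $v$ on $\partial M$ directly, using the explicit coordinate description from Definition~\ref{Morse function with boundary}, and compute the relative homology $H_*(f_a, f_a\setminus\{v\})$ that governs $H$-criticality according to Definition~\ref{critical}. Since strongly regular vertices are $H$-regular (their $lk^-(v)$ is a ball by Lemma~\ref{strongly regular}, hence $H_*(f_a,f_a\setminus\{v\})=0$ just as in the interior case), the only thing to check is the claim about $(+)$- and $(-)$-critical points, together with the multiplicity and index assertions for the $(+)$-case.

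First I would set up the local picture: in the chart around $v$, $M$ is $\{x_d \geq g(x_1,\dots,x_{d-1})\}$ (for $(+)$-critical) or $\{x_d \leq g\}$ (for $(-)$-critical), where $g = -|x_1|-\cdots-|x_k|+|x_{k+1}|+\cdots+|x_{d-1}|$, and $f = f(v)+x_d$. So $f_a = M\cap\{x_d\le 0\}$ near $v$ (taking $f(v)=a$), which is $\{g(x_1,\dots,x_{d-1}) \le x_d \le 0\}$ in the $(+)$-case and $\{x_d\le \min(0,g)\}$ in the $(-)$-case. I would then observe that $H_*(f_a, f_a\setminus\{v\})$ is computed as $\widetilde H_{*-1}(lk^-(v))$ by the excision/long-exact-sequence argument already given in the excerpt, so the task reduces to identifying $lk^-(v) = lk(v)\cap f_a$ up to homotopy. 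For the $(+)$-critical case, $f_a$ near $v$ deformation-retracts onto the graph region over the sublevel set $\{g\le 0\}$ of the function $g$ at level $0$; and $\{g \le 0\}$ in $\mathbb R^{d-1}$ is exactly the local sublevel of the standard non-degenerate critical point of index $k$ in dimension $d-1$. Hence $lk^-(v)$ is homotopy equivalent to $S^{k-1}$ and $\widetilde H_j(lk^-(v)) = \mathbb F$ for $j=k-1$, zero otherwise, giving $H_k(f_a,f_a\setminus\{v\})\cong\mathbb F$ and multiplicity $1$ with index $k$. For the $(-)$-critical case, $f_a$ near $v$ is $\{x_d \le \min(0,g)\}$ intersected with a ball; I would argue this piece is a cone with apex $v$ — indeed the whole region is star-shaped with respect to $v$ since scaling all coordinates toward $0$ keeps the defining inequalities (both $x_d\le 0$ and $x_d\le g$ are preserved under $(x)\mapsto(tx)$ because $g$ is homogeneous of degree $1$) — so $lk^-(v)$ is contractible (it is the base of that cone, a ball), hence $H_*(f_a,f_a\setminus\{v\})=0$ and $v$ is $H$-regular.

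The main obstacle, and the step deserving the most care, is the $(-)$-critical computation: one must be sure that $lk(v)\cap f_a$ is genuinely contractible (a ball) rather than merely homologically trivial, and that the star-shapedness/cone argument is carried out for the correct region — namely $M\cap f_a$ near $v$, which is the intersection of the half-space-like region $\{x_d\le g\}$ with the slab $\{x_d\le 0\}$ — and that this intersection with a small standard ball around $v$ is again a cone on a ball. Homogeneity of $g$ is the key input making the cone structure work, exactly as it does in the interior non-degenerate case; I would spell out that the linear contraction $h_t(x) = tx$, $t\in[0,1]$, maps $f_a$ (locally) into itself and fixes $v$, furnishing the deformation retraction of $lk^-(v)$ to a point (together with the observation that it is a PL ball, being the intersection of finitely many PL halfspaces with a PL sphere). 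A secondary, lighter point is to note consistency with Proposition~\ref{braess1}: the $H$-criticality pattern here (only $(+)$-points are $H$-critical, each contributing one cell of index $k$) matches Braess's smooth result, and also matches the inequality ${\rm rk}\,H_*(f_a,f_a\setminus\{v\}) + {\rm rk}\,H_*(f^a,f^a\setminus\{v\}) \ge {\rm rk}\,H_*((f|_{\partial M})_a,(f|_{\partial M})_a\setminus\{v\})$ recorded earlier, which becomes an equality ($0+1 = 1$ for a $(+)$-point, $1+0=1$ for a $(-)$-point) in this non-degenerate situation. Finally I would remark that by the duality $\widetilde H_{d-k}(f^a,f^a\setminus\{v\})\cong\widetilde H_k(f_a,f_a\setminus\{v\})$ — or more directly since $v$ is $(+)$-critical for $f$ iff $(-)$-critical for $-f$ — the two cases of the computation are formally interchanged, so only one of them needs a full argument.
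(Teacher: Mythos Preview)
Your overall strategy---working in the explicit chart of Definition~\ref{Morse function with boundary} and computing $\widetilde H_*(lk^-(v))$---is exactly what the paper does, though the paper's proof is little more than an assertion of the conclusions. Your $(+)$-critical computation is essentially correct.

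The $(-)$-critical case, however, has a real gap. You argue that $R := f_a \cap st(v) = \{x_d \le 0,\ x_d \le g\}$ is star-shaped at $v$, hence a cone with apex $v$, and then conclude that its base $lk^-(v)$ is contractible. But this inference is vacuous: for \emph{every} vertex (critical or not), $f_a \cap st(v)$ is automatically the cone on $lk^-(v)$, because $f$ is linear on each simplex of $st(v)$. The cone structure alone says nothing about the base. Your proposed remedy, that the scaling $h_t(x)=tx$ ``furnishes the deformation retraction of $lk^-(v)$ to a point'', does not work: $h_t$ moves points off the link sphere toward the apex and so does not restrict to a homotopy on $lk^-(v)$. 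And the fallback claim that ``the intersection of finitely many PL halfspaces with a PL sphere'' is a ball is false in general (e.g.\ $\{-|x_1|\le x_2\le |x_1|\}\cap S^1$ is two disjoint arcs). Finally, the duality remark at the end does not rescue the argument: a $(-)$-critical point for $f$ is $(+)$-critical for $-f$, but that only tells you about $H_*(f^a,f^a\setminus\{v\})$, not about $H_*(f_a,f_a\setminus\{v\})$; the paper explicitly warns that there is no such duality on manifolds with boundary.

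A clean fix: observe that near $v$ one has $f_a = \{x_d \le \min(0,g(x'))\}$, the region below the graph of the PL function $h(x'):=\min(0,g(x'))$. The shear $(x',x_d)\mapsto(x',x_d-h(x'))$ is a PL homeomorphism of $\mathbb{R}^d$ fixing the origin and carrying this region to the genuine halfspace $\{x_d\le 0\}$. Hence a neighborhood of $v$ in $f_a$ is a PL $d$-ball with $v$ on its boundary, so $lk^-(v)$ is a PL $(d-1)$-ball and $v$ is $H$-regular. This is presumably what the paper has in mind when it says in one line that ``the homotopy types of $f_a$ and $f_a\setminus\{v\}$ coincide''.
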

\begin{proof} 
  The number $k$ in Definition~\ref{Morse function with boundary}
is uniquely determined and coincides with the
index of $v$ if $v\in \partial M$ is $(+)$-critical, and the  
multiplicity is always $1$ in this case:
$H_k(f_a,f_a, \setminus \{v\};\mathbb{F}) \cong \mathbb{F}$ and
$H_j(f_a,f_a, \setminus \{v\}) = 0$ for any $j \not = k$.
The change by passing through the critical level can be either
$H_k(f_{a+\epsilon}) \cong H_k(f_{a-\epsilon}) \oplus \mathbb{F}$ 
or $H_{k-1}(f_{a-\epsilon}) \cong H_{k-1}(f_{a+\epsilon}) \oplus \mathbb{F}$. 
A function such that the second case never occurs is called a 
\emph{perfect function}.
For a $(-)$-critical vertex $v \in \partial M$ the homotopy types of
$f_a$ and $f_a \setminus \{v\}$ coincide.
\end{proof} 

\begin{cor}
Proposition~\ref{braess1} remains valid for PL Morse functions
on PL manifolds with boundary.
\end{cor}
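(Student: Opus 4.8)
The plan is to reduce the PL analogue of Proposition~\ref{braess1} to the closed case by the usual doubling and excision arguments, exactly as in the smooth proof of Braess but with the smooth tools replaced by their PL counterparts already developed in the excerpt. First I would record the local picture at each type of boundary vertex. A strongly regular interior vertex and a non-degenerate critical interior vertex behave exactly as in Section~\ref{sec:our-critical-points}; a $(-)$-critical vertex $v\in\partial M$ is $H$-regular and, by the last Corollary above, the homotopy type of $f_a$ is unchanged when the level passes $f(v)$ (in fact $f_{a-\epsilon}$ is a deformation retract of $f_{a+\epsilon}$, by the local product structure coming from the chart in Definition~\ref{Morse function with boundary}); a $(+)$-critical vertex of index $k$ is $H$-critical with multiplicity $1$ and attaches a single $k$-cell, again by that Corollary. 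This gives the three bullet assertions of Proposition~\ref{braess1} directly from the local normal forms.

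Next I would assemble these local statements into the global Morse inequality. Order the vertices by $f$-value and sweep the sublevel sets $f_{a_i}$ from $-\infty$ to $+\infty$. Passing a strongly regular vertex (interior or boundary) or a $(-)$-critical boundary vertex does not change the homotopy type, by Lemma~\ref{deformation-retract} for non-vertex levels and by the local deformation-retract statements just recorded at the vertices; passing an interior non-degenerate critical vertex of index $k$ or a $(+)$-critical boundary vertex of index $k$ changes $f_{a_i}$ up to homotopy by attaching one $k$-cell. Hence $M=f_b$ (for $b$ above all values) has a CW structure with exactly $\mu_k(f|_{M\setminus\partial M})+\mu_k^+(f)$ cells of dimension $k$, and the standard algebraic argument (the rank of $H_k$ of a CW complex is at most the number of $k$-cells, summed over the whole complex) yields
\[
\mu_k(f|_{M\setminus\partial M})+\mu_k^+(f)\ \geq\ \operatorname{rk}H_k(M;\mathbb{F}),
\]
and after summing over $k$ the stated inequality with $\operatorname{rk}H_*$. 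This is essentially Theorem~\ref{Morse duality} applied to the filtration, with the boundary contribution accounted for by the $(+)$-critical vertices only.

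For the boundary duality I would simply apply the closed-manifold Morse theory (Theorem~\ref{Morse duality} and its Corollary in Section~\ref{sec:our-critical-points}) to the restriction $f|_{\partial M}$, which by the last display of Definition~\ref{Morse function with boundary} is a PL Morse function on the closed $(d-1)$-manifold $\partial M$ whose critical vertices of index $k$ are precisely the $(+)$- and $(-)$-critical vertices of $f$ of index $k$ (a $(+)$-critical vertex of index $k$ for $f$ and a $(-)$-critical vertex of index $k$ for $f$ both give a non-degenerate critical vertex of index $k$ for $f|_{\partial M}$, as the normal form shows). Thus $\mu^+(f)+\mu^-(f)=\mu(f|_{\partial M})\geq\operatorname{rk}H_*(\partial M;\mathbb{F})$. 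Finally, the observation that $v$ is $(+)$-critical for $f$ iff it is $(-)$-critical for $-f$ is immediate from Definition~\ref{Morse function with boundary}: replacing $f$ by $-f$ replaces $x_d$ by $-x_d$ and flips the inequality describing $M$, so no genuine duality on $M$ itself can hold, matching the last sentence of Proposition~\ref{braess1}.

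I expect the only real work to be the verification that passing a $(-)$-critical boundary vertex genuinely preserves homotopy type — i.e. turning the local chart of Definition~\ref{Morse function with boundary} into an honest deformation retraction of $f_{a+\epsilon}$ onto $f_{a-\epsilon}$ near $v$ and gluing it with the non-vertex retraction of Lemma~\ref{deformation-retract} away from $\operatorname{st}(v)$. In the smooth setting this is Braess's Satz 4.1; in the PL setting it follows from the cone/product structure of $\operatorname{st}(v)$ in the given chart, essentially by the same cone construction used in Section~\ref{isotopy} for strongly regular vertices, restricted to the half-space $x_1\le 0$. Everything else is bookkeeping with the normal forms and an appeal to the already-established closed-manifold results.
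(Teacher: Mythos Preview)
The paper gives no explicit proof for this corollary; it is simply stated as following from Definition~\ref{Morse function with boundary} together with the preceding corollary on the $H$-criticality and $H$-regularity of $(\pm)$-critical boundary vertices. Your proposal is correct and merely spells out what the paper leaves implicit: the local normal forms give the cell-attachment at $(+)$-critical and interior critical vertices and the deformation retraction across $(-)$-critical and strongly regular vertices, the sublevel filtration then yields the Morse inequality, and the closed-manifold theory applied to $f|_{\partial M}$ gives the boundary inequality. One cosmetic remark: your opening sentence advertises a ``doubling'' argument that you never actually carry out---the proof you write is a direct filtration/CW argument, so that sentence should be dropped or rephrased.
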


\section{Discrete Morse functions induce PL Morse functions}
\label{sec:Forman}

The above characterizations of strongly regular, non-degenerate,
$(+)$- and $(-)$-critical points also allow an easy proof for a construction
of PL Morse functions from discrete Morse functions.
For the connection between classical Morse theory and discrete Morse theory
see \cite {Be}. In particular for any smooth $d$-manifold with $d\leq 7$ the
set of smooth Morse vectors coincides with the set of discrete Morse vectors.

\begin{defi} {\rm (Forman \cite{Fo})}

 A \emph{discrete Morse function}
        maps cells of a complex to real numbers
        such that for each $k$-cell,
        there is at most one exceptional $(k-1)$-face
        whose value is not strictly smaller
        and at most one exceptional $(k+1)$-coface
        whose value is not strictly larger.
        A $k$-cell is called \emph{critical}
        if it has no exceptional $(k-1)$-face
        and no exceptional $(k+1)$-coface.

        Fact:
        No cell has
        both an exceptional face and an exceptional coface,
        hence pairing each non-critical cell
        with its exceptional face or coface
        yields a partial matching of immediate face/coface pairs.

        We call a discrete Morse function \emph{generic}
        if it has the following additional properties:
        The function is injective.
        Any non-immediate face of a cell has smaller value.

        Fact:
        Any discrete Morse function is equivalent to a generic one
        in the sense that
        it has the same critical cells and induces the same matching.

\end{defi}

\begin{lem} \label{discrete Morse}
	Any discrete Morse function on a combinatorial manifold~$M$
	induces a generic PL Morse function linear on cells
	of a derived subdivision of~$M$ such that
	non-critical cells correspond to strongly regular vertices
	and critical cells of dimension~$k$ correspond to
	non-degenerate vertices of index~$k$.
\end{lem}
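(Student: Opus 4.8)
The plan is to reorganize the discrete Morse function $F$ into a combinatorial handle decomposition of $M$ and then to build a PL function on a suitable derived subdivision $M'$ whose sublevel sets realize that decomposition; the critical behavior at each barycenter is then read off from the characterizations of Section~\ref{sec:our-critical-points}.

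First I would set up the handle decomposition. Replacing $F$ by an equivalent generic one, the cells of $M$ fall into critical cells and matched face/coface pairs $(\sigma,\tau)$ with $\dim\tau=\dim\sigma+1$. I group each pair into one \emph{unit}, leave each critical cell as a unit, and order the units by the largest $F$-value occurring in the unit. Using the genericity clauses --- ``no cell has both an exceptional face and an exceptional coface'' and ``every non-immediate face has a strictly smaller value'' --- one checks that whenever $x<y$ are cells lying in different units, the unit of $x$ precedes the unit of $y$; hence the prefixes $\emptyset=L_0\subset L_1\subset\cdots\subset L_N=M$ are subcomplexes. Each step $L_{i-1}\subset L_i$ is of one of two types: either $L_i=L_{i-1}\cup\bar c$, where $c$ is a critical $k$-cell attached along its whole boundary ($\partial c\subseteq L_{i-1}$, and $c$ has no coface in $L_i$); or $L_i=L_{i-1}\cup\bar\tau$ for a matched pair $(\sigma,\tau)$, glued along the $(\dim\tau-1)$-ball $\partial\tau\setminus\sigma^{\circ}$, i.e.\ an elementary expansion.

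Next I would build a generic PL function $f$ on $M'$ inductively along this filtration, so that for suitable values $a_0<a_1<\cdots$ the sublevel set $f_{a_i}$ is a regular neighborhood of $|L_i|$ in $|M|$. At an elementary-expansion step I extend $f$ over the newly added subdivided ball $(\bar\tau)'$ by a function with no new critical vertex, ``sweeping'' $(\bar\tau)'$ in from the already attached face $(\partial\tau\setminus\sigma^{\circ})'$ --- this is a level-set isotopy of exactly the kind constructed in Section~\ref{isotopy}, now carried out across the two new vertices $\hat\sigma,\hat\tau$. By Lemma~\ref{strongly regular} both are strongly regular, because the descending links $lk^-(\hat\sigma)$ and $lk^-(\hat\tau)$ come out as PL $(d-1)$-balls, realized as joins of a ball in a subdivided $\partial(\cdot)$ with a ball in a subdivided $lk(\cdot,M)$. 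At a critical-cell step I choose the level at $\hat c$ so that just below it the sublevel has, inside $lk(\hat c,M')$, swept out a regular neighborhood of the unknotted $(k-1)$-sphere formed by the subdivided boundary of $c$; the Lemma on non-degenerate critical points then certifies that $\hat c$ is non-degenerate critical of index $k$. On the remainder of $M'$ I extend $f$ to a generic PL function with no vertex of $M$ in the interior of a level slab, so by Lemma~\ref{deformation-retract} no other vertex is critical and they are all strongly regular. Rescaling recovers the normal forms of Definition~\ref{Morse function}.

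The main obstacle is this inductive extension: one has to assign the $f$-values at the new barycenters --- and, at a critical-cell step, control which neighboring barycenters already lie below $\hat c$ --- so that the descending link at every vertex is \emph{literally} a PL ball, respectively a regular neighborhood of an unknotted sphere, as the characterizing lemmas require. Two points deserve care. The values must \emph{not} follow the order of $F$ (a matched pair typically has $F(\sigma)>F(\tau)$, yet $\hat\tau$ must end up just below $\hat\sigma$, only after enough of $lk(\tau,M)$ has been swept under it); and the first derived subdivision alone is too coarse to host the local model of an intermediate-index critical vertex (already for $d=2$ a $4$-cycle cannot contain two disjoint arcs as a full subcomplex), so one must pass to a sufficiently iterated derived subdivision. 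Once the critical-point-free sweeps of the matched pairs have been glued to the handle attachments of the critical cells into a single generic PL function on $M'$, the local characterizations of Section~\ref{sec:our-critical-points} identify the critical vertices and their indices at once, which is why the proof is short.
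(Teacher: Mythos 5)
Your plan takes a long detour that the lemma does not need, and the two claims you use to justify the detour are both incorrect. The paper's proof simply assigns $f(v_S)=g(S)$ to the barycenter $v_S$ of each cell $S$ (for a generic discrete Morse function $g$) and interpolates linearly on the \emph{first} derived subdivision; no reordering of values and no iterated subdivision is required. Your first worry --- that the values ``must not follow the order of $F$'' --- disappears once you use the genericity of $g$: for a matched pair $\sigma<\tau$ one has $g(\tau)<g(\sigma)$, every proper face of $\sigma$ has smaller value, and every coface of $\sigma$ other than $\tau$ has larger value, so the low vertices of $lk(v_\sigma)$ span exactly the cone $(bd\,\sigma)'\ast v_\tau$, a ball; likewise the low vertices of $lk(v_\tau)$ span $(bd\,\tau)'$ minus the open star of $v_\sigma$, again a ball. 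Whether $v_\tau$ lies ``just below'' $v_\sigma$ is irrelevant. Your second worry --- that a $4$-cycle cannot contain two disjoint arcs as a full subcomplex, so one must iterate the subdivision --- confuses $lk^-(v)$ with a full subcomplex. By definition $lk^-(v)=lk(v)\cap f_a$ is the sublevel set of a simplexwise linear function, hence a \emph{regular neighborhood} of the full subcomplex spanned by the low vertices; on the $4$-cycle $lk(v_S)$ of a critical edge $S$ in a surface (two endpoints of $S$ low, two incident triangles high) this sublevel set is precisely two disjoint arcs, i.e.\ a regular neighborhood of the unknotted $S^0=(bd\,S)'$, which is exactly what the characterization of non-degenerate critical points requires.

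Beyond these two errors, the heart of your argument --- extending $f$ inductively across elementary expansions and critical-cell attachments so that every descending link comes out as a ball or as a regular neighborhood of an unknotted sphere --- is the step you yourself label ``the main obstacle'' and never carry out; as written it is a programme rather than a proof. It is also doubtful that a function built by repeated sweeps on iterated subdivisions would be ``linear on cells of a derived subdivision of $M$'' with the cell--vertex correspondence demanded by the statement. The repair is to drop the handle-decomposition detour entirely: set $f(v_S)=g(S)$, note that $lk(v_S)$ in the derived subdivision is the join of $(bd\,S)'$ with the sphere of barycenters of proper cofaces of $S$ (so $(bd\,S)'$ is unknotted in $lk(v_S)$), and read off from the genericity of $g$ which join vertices are low: for a critical $k$-cell they span exactly $(bd\,S)'$, giving a non-degenerate critical vertex of index $k$; for a non-critical cell they span either a cone over $(bd\,S)'$ or $(bd\,S)'$ with an open vertex star removed, and Lemma~\ref{strongly regular} then gives strong regularity.
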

\begin{proof}
	Let $K$ be the underlying complex of~$M$ and $g\colon K\to\mathbb{R}$
	a discrete Morse function, without loss of generality generic.
	Define $f$ on the domain of a derived subdivision of $K$
	by linearly interpolating the values at the vertices
	given by the assignment $f(v_S)=g(S)$
	for each cell $S\in K$ and its corresponding 
	vertex~$v_S$ in the derived.
	Observe that for a $k$-simplex~$S$ in $K$,
	the link of $v_S$ in a derived subdivision
	is the join of two spheres,
	namely the derived of $bd(S)$,
	formed by vertices corresponding to proper faces of $S$, 
	and a sphere formed by the vertices
	corresponding to proper cofaces of $S$.
	In particular, the embedding of the $(k-1)$-sphere
	formed by the derived of $bd(S)$ is unknotted
	in $lk(v_S)$.
	For a critical cell~$S$,
	this implies already the claim
	that $v_S$ is non-degenerate critical of index~$k$,
	because the subcomplex of $lk(v)$  spanned 
        by the vertices
        with $f$-value smaller than $g(S)$
        agrees with the derived of $bd(S)$
        in this case
        and hence $lk^-(v_S)$ is a regular neighborhood
        of an unknotted $(k-1)$-sphere.


	For a non-critical cell~$S$ however, 
        the subcomplex of $lk(v)$
        spanned by the vertices
        with $f$-value smaller than $g(S)$
        is either
        the derived of $bd(S)$
        with the open star of a vertex $v_T$ removed,
        where $T$ is the exceptional face of $S$,
        or the join of the derived of $bd(S)$
        with a single vertex $v_{ST}$,
        where $ST$ is the exceptional coface of $S$.
        In any case, the subcomplex is a ball
        and its regular neighborhood
        $lk^-(v_S)$ is a ball as well,
        showing that $v_S$ is strongly regular.
\end{proof}


The construction from Lemma~\ref{discrete Morse}
also works for generic discrete Morse functions~$g$
on a combinatorial manifold~$M$ with boundary.
Then the boundary cells produce the following types of vertices
for the induced PL Morse function:
A critical boundary cell of dimension~$k$
corresponds to a $(+)$-critical point of index~$k$.
A non-critical cell
that is paired with a cell in the boundary,
i.e., the cell is also non-critical with respect to
the restriction of $g$ to the boundary of $M$,
corresponds to a strongly regular point.
A non-critical cell of dimension~$k$
that is paired with a cell not belonging to the boundary,
i.e., the cell is critical with respect to
the restriction of $g$ to the boundary of $M$,
corresponds to a $(-)$-critical point of index~$k$.

\section{The special case of low dimensions} 
\label{sec:low-dimensions}
Under the assumption that distinct vertices have distinct $f$-levels,
only vertices can be critical. The critical vertices
play the role of the critical points in classical Morse theory,
either in the version of non-degenerate points or -- more generally --
for generic PL functions where higher multiplicities are admitted.
However, the H-regular vertices that are not strongly regular
do not fit this analogy: They do not contribute to the
Morse inequalities and they have no analogue in the classical theory
since they do not allow the cylindrical decomposition in a neighborhood
with an isotopy between the upper and the lower sublevel.
In some sense they are the most exotic objects to be considered here.
Therefore the question is whether they can occur or not.
In low dimensions $d \leq 4$ this is indeed not the case.

\begin{pro}
A $1$-dimensional finite polyhedral complex is a graph. Any generic PL
function has only minima (index $0$) or critical vertices
of index $1$, possibly with higher multiplicity.
Any vertex which is H-regular for $f$ and for $-f$ simultaneously
is also strongly regular for both of them.

\medskip
For a $1$-dimensional manifold we have only minima (index $0$),
maxima (index $1$) and strongly regular vertices otherwise.
\end{pro}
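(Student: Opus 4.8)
The statement has two parts, and the essential observation is that in dimension $1$ the link of a vertex $v$ in a graph is a finite set of points, which I will denote $lk(v)$, partitioned by the generic function $f$ into $lk^-(v)$ (the points below level $f(v)$) and $lk^+(v)$ (the points above). For the first claim, note that a vertex of a $1$-dimensional complex has some number $p \ge 0$ of incident edges going down and $q \ge 0$ going up. If $p = 0$ the vertex is a local minimum, hence H-critical of index $0$; otherwise $lk^-(v)$ is a nonempty set of $p$ points, whose reduced homology $\widetilde H_0$ has rank $p-1$, so by the link computation in Section~\ref{sec:critical-points} the vertex is H-critical of index $1$ with multiplicity $p - 1$ (it is H-regular exactly when $p = 1$). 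Thus the only possible indices are $0$ and $1$, as asserted.

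For the sentence about simultaneous H-regularity, observe that $v$ being H-regular for $f$ forces (by the above) $p \le 1$, and H-regular for $-f$ symmetrically forces $q \le 1$, where now $q = |lk^+(v)|$. Combined with the fact that in an abstract $1$-complex a vertex incident to no edge at all would be an isolated point (a connected component that is a single vertex) — which is a local minimum, hence not H-regular for $f$ unless we also note $\widetilde H_{-1}$ conventions, so I would simply assume $v$ lies on at least one edge — we get $p = q = 1$. Then $lk^-(v)$ is a single point, which is a PL $0$-ball, so by Lemma~\ref{strongly regular} (with $d = 1$) the vertex is strongly regular for $f$; the same argument with $-f$ gives strong regularity for $-f$. (One should be slightly careful that Lemma~\ref{strongly regular} is stated for combinatorial $d$-manifolds; for a general graph one instead invokes directly the definition of strongly regular, Definition~\ref{Morse function}: a vertex with exactly one edge down and one edge up admits the obvious chart on the union of those two edges in which $f$ is the coordinate $x_1$.)

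For the second paragraph of the proposition, a $1$-dimensional PL manifold is a disjoint union of circles (compact case) and arcs; in either case every vertex lies on exactly two edges, so $(p,q) \in \{(0,2),(1,1),(2,0)\}$. The case $(0,2)$ is a local minimum (index $0$), the case $(2,0)$ a local maximum (index $1$, with $lk^-(v)$ two points, multiplicity $1$), and the case $(1,1)$ is strongly regular by the chart just described. This exhausts the possibilities, giving the claimed trichotomy.

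\textbf{Main obstacle.} There is essentially no hard step here; the only point requiring a little care is the bookkeeping around vertices lying on a single edge or on no edge (isolated vertices), and the mild mismatch between the hypotheses of Lemma~\ref{strongly regular} (combinatorial manifold) and the generality of an arbitrary $1$-complex in the first part — which I resolve by appealing directly to Definition~\ref{Morse function} rather than to the Lemma. Everything else is immediate from the link characterization of H-criticality.
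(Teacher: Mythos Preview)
Your proof is correct and follows essentially the same line as the paper's: count the points $p=|lk^-(v)|$ and $q=|lk^+(v)|$, read off the index and multiplicity from $\widetilde H_0$, and observe that simultaneous H-regularity forces $p=q=1$, whence the obvious chart on the two incident edges makes $v$ strongly regular. One small wrinkle: since you already noted that $p=0$ gives a local minimum (H-critical of index~$0$), H-regularity for $f$ directly forces $p=1$ rather than merely $p\le 1$, so the digression about isolated vertices is unnecessary; your extra care in invoking Definition~\ref{Morse function} directly for the general graph case (rather than Lemma~\ref{strongly regular}, which is stated for manifolds) is, however, a point the paper simply elides with ``Obviously''.
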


\begin{proof} 
  Let $v$ be a vertex and $a = f(v)$.
The link of $v$ is a finite set of points, some below the $a$-level,
some above.
If $lk^-(v)$ is empty we have a local minimum, the total multiplicity is 1.
If $lk^-(v)$ consists of $r \geq 2$ points then $v$ is critical of index 1 with
the multiplicity $r-1$.
In the special case $r=1$ the point is H-regular.
For $-f$ we have to interchange $lk^-(v)$ and $lk^+(v)$.
If in addition $lk^+(v)$ consists of only one point then
$v$ is a vertex of valence 2 between one upper and one lower vertex.
Obviously $v$ is strongly regular in this case.
For a 1-manifold $lk(v)$ consists always of precisely two points,
so the condition follows from $r=1$ for one of the functions $f$ or $-f$.
\end{proof}

\begin{pro}\label{surfaces}
Let $M$ be a PL $2$-manifold (a surface) with a generic PL function 
$f \colon M \to \mathbb{R}$.
The critical points (vertices) are only of the following types:

1. Local minima (index $0$, multiplicity $1$),

2. local maxima (index $2$, multiplicity $1$),

3. saddle points (index $1$, multiplicity arbitrary).

Any H-regular vertex is also strongly regular, and any saddle point
is non-degenerate critical in the sense of Definition~\ref{Morse function} if
its (total) multiplicity is $1$ in the sense of Definition~\ref{critical}.

\end{pro}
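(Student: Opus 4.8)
The plan is to reduce everything to an analysis of the link of a vertex $v$ in a PL surface, which is always a circle (a PL $1$-sphere), i.e.\ a cyclic sequence of vertices and edges. Write $a=f(v)$ and consider the partition of the vertices of $\mathrm{lk}(v)$ into those below level $a$ and those above; since $f$ is generic, no link vertex sits exactly at level $a$, so the circle $\mathrm{lk}(v)$ is cut by the level $f^{-1}(a)$ into finitely many open arcs, alternately belonging to $\mathrm{lk}^-(v)$ and $\mathrm{lk}^+(v)$. Let $2s$ be the number of such arcs, so $\mathrm{lk}^-(v)$ consists of $s$ disjoint arcs (each a $1$-ball) when $s\ge 1$, and is empty when $s=0$, and dually for $\mathrm{lk}^+(v)$. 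The integer $s$ is the basic combinatorial invariant; the whole classification falls out of it.

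First I would dispose of the extreme case $s=0$: then either $\mathrm{lk}^-(v)=\varnothing$, giving a local minimum (index $0$, and $\mathrm{lk}^-(v)$ is vacuously so that $f_a\setminus\{v\}$ near $v$ is empty, total multiplicity $1$), or $\mathrm{lk}^+(v)=\varnothing$, giving a local maximum (index $2$, multiplicity $1$ by the same reasoning applied to $-f$). Next, the generic case $s=1$: here $\mathrm{lk}^-(v)$ is a single arc, hence a PL $1$-ball, so by Lemma~\ref{strongly regular} the vertex $v$ is strongly regular; and of course a strongly regular vertex is H-regular, while conversely I must check that no other configuration is H-regular. Using the excision/long-exact-sequence identification from Section~\ref{sec:critical-points}, $\widetilde H_{k}(f_a,f_a\setminus\{v\})\cong\widetilde H_{k-1}(\mathrm{lk}^-(v))$, and a disjoint union of $s\ge 1$ arcs has $\widetilde H_0$ of rank $s-1$ and all higher reduced homology zero; hence the total multiplicity of $v$ is exactly $\max(s-1,0)$ (with the minimum case $s=0$ contributing $1$ as noted). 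So for $s\ge 2$ the vertex is H-critical of index $1$ with multiplicity $s-1$, and it is H-regular precisely when $s\le 1$. This simultaneously proves that the only critical types are minima, maxima, and index-$1$ saddles, and that H-regular $\Rightarrow$ strongly regular for surfaces.

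It remains to treat the last clause: a saddle point $v$ with total multiplicity $1$ is non-degenerate critical in the sense of Definition~\ref{Morse function}. Multiplicity $1$ forces $s=2$, so $\mathrm{lk}^-(v)$ is the disjoint union of exactly two arcs and likewise $\mathrm{lk}^+(v)$; equivalently, going around the circle $\mathrm{lk}(v)$ the sign of $f-a$ changes exactly four times. I would now invoke the general principle quoted in Section~\ref{sec:our-critical-points}: $f$ near $v$ is PL-equivalent to a model $g$ near a vertex $w$ iff there is a PL homeomorphism $\mathrm{lk}(v)\to\mathrm{lk}(w)$ respecting the signs of $f$ and $g$. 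The standard non-degenerate index-$1$ saddle in dimension $2$, $g(x_1,x_2)=-|x_1|+|x_2|$, has link the boundary of a small square with the two ``$-$'' arcs and two ``$+$'' arcs arranged alternately, i.e.\ with exactly four sign changes. Any PL homeomorphism of circles matching up, in cyclic order, two $-$-arcs alternating with two $+$-arcs exists and is essentially unique up to the obvious symmetries, so the required sign-respecting PL homeomorphism $\mathrm{lk}(v)\to\mathrm{lk}(w)$ is immediate. Hence $v$ is non-degenerate critical of index $1$.

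The main obstacle is purely bookkeeping rather than conceptual: making the ``sign pattern on the circle'' argument precise — in particular, citing cleanly the equivalence between local charts transforming $f$ into a model and sign-preserving PL homeomorphisms of links, and checking that a cyclic word of the form $(-)(+)(-)(+)$ on $\mathrm{lk}(v)$ admits a PL homeomorphism to the corresponding word on the boundary of a square. Once one is comfortable quoting Lemma~\ref{strongly regular} and the sign-pattern principle stated in Section~\ref{sec:our-critical-points}, every step is elementary, since in dimension $2$ all links are circles and there is no knotting or exotic Schoenflies phenomenon to worry about.
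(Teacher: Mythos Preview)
Your proposal is correct and follows essentially the same approach as the paper: analyze the circle $\mathrm{lk}(v)$, count the number $s$ (the paper's $r$) of components of $\mathrm{lk}^-(v)$, read off minima/maxima/saddles and multiplicities from this count, and apply Lemma~\ref{strongly regular} for the H-regular case $s=1$. The only cosmetic difference is in the multiplicity-$1$ saddle case: you invoke the general ``sign-pattern on the link'' principle from Section~\ref{sec:our-critical-points} to transport to the model $-|x_1|+|x_2|$, whereas the paper argues more directly by taking the two arcs in $\mathrm{lk}^-(v)$ and $\mathrm{lk}^+(v)$ as coordinate directions; these are two phrasings of the same construction.
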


A splitting process of saddle points with higher multiplicity into
ordinary saddle points is described in \cite[p.~93]{EHZ}.

\begin{cor}
Any generic PL function on a PL $2$-manifold is a PL Morse function if
the multiplicity of every saddle point is $1$.
\end{cor}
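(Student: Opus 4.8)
The plan is to deduce this immediately from the classification in Proposition~\ref{surfaces}. By Definition~\ref{Morse function}, calling $f$ a PL Morse function means precisely that every vertex is either non-degenerate critical or strongly regular, so it suffices to check that, under the stated multiplicity hypothesis, each vertex of $M$ falls into one of these two types. First I would invoke Proposition~\ref{surfaces} to sort the vertices into the four classes it provides: local minima, local maxima, saddle points, and H-regular vertices.

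The H-regular vertices require no further argument, since Proposition~\ref{surfaces} states directly that every H-regular vertex on a surface is strongly regular. Likewise, the hypothesis is that every saddle point has multiplicity~$1$, and Proposition~\ref{surfaces} asserts that a saddle of multiplicity~$1$ is non-degenerate critical; concretely, $lk(v)$ is a circle and $lk^-(v)$ consists of $r$ arcs contributing multiplicity $r-1$, so multiplicity~$1$ forces $r=2$, exhibiting $lk^-(v)$ as a regular neighborhood of an unknotted $0$-sphere in the circle $lk(v)$, i.e.\ index~$1$.

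The only point needing separate attention is that the extremal vertices --- listed in Proposition~\ref{surfaces} as minima and maxima but not there explicitly labelled non-degenerate --- also satisfy the non-degeneracy criterion. For a local minimum, $lk^-(v)=\emptyset$, which is the regular neighborhood of the (empty) unknotted $(-1)$-sphere, so by the Lemma on non-degenerate critical points $v$ is non-degenerate of index~$0$. For a local maximum, $lk^-(v)=lk(v)$ is the whole circle, the regular neighborhood of the unknotted $1$-sphere in itself, so $v$ is non-degenerate of index~$2$. These are the boundary cases $k=0$ and $k=d$ of the criterion, where no knotting can arise.

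Combining the three cases, every vertex of $M$ is either non-degenerate critical or strongly regular, so $f$ is a PL Morse function, as claimed. I do not expect any substantial obstacle here: the corollary is essentially a bookkeeping consequence of Proposition~\ref{surfaces}, and the only care required is the verification of the extremal cases above, for which the $(d-1)$-dimensional link is a circle and every embedding of a $0$-sphere (or of the empty sphere) into it is automatically unknotted.
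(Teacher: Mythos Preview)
Your proposal is correct and matches the paper's approach: the paper gives no separate proof of the corollary, treating it as an immediate consequence of Proposition~\ref{surfaces}, and your argument simply spells out this deduction. Your explicit verification that local minima and maxima are non-degenerate (via the empty $(-1)$-sphere and the full circle as unknotted spheres in $lk(v)$) is a sensible addition, since the proposition's statement does not literally include this, but it is the expected routine check.
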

\begin{proof}[Proof of Proposition~\ref{surfaces}]
The link of a vertex $v$ is a closed circuit of edges.
If $lk^-(v)$ is empty we have a minimum, if $lk^-(v) = lk(v)$ we have
a maximum ($lk^+(v)$ is empty), in all other cases $lk^-(v)$ and
$lk^+(v)$ have the same number of components, say $r$ components.
Then $v$ is critical of index 1 and multiplicity $r-1$.
An ordinary (non-degenerate) saddle point has $r=2$, a monkey saddle $r=3$.

The case of a H-regular vertex corresponds to the case $r=1$.
Since $st(v)$ is a topological disc, this implies that  both
$st^-(v)$ and $st^+(v)$ are discs, fitting together
along the $a$-level which is an interval.
Then we can apply Lemma~\ref{strongly regular}.

The case of an ordinary saddle point corresponds to the case $r=2$.
These two components in $lk^-(v)$ and $lk^+(v)$ determine one
coordinate line each such that the function $f$ is linearly decreasing
or increasing, respectively. The $f(v)$-level in between is the cross
of the two diagonals in that coordinate system. 
\end{proof}

\begin{thm}
Let $M$ be a PL $3$-manifold with a generic PL function 
$f \colon M \to \mathbb{R}$.
The critical points (vertices) are only of the following types:

1. Local minima (index $0$, multiplicity $1$),

2. local maxima (index $3$, multiplicity $1$),

3. mixed saddle points (index $1$ or $2$ or both, multiplicity arbitrary).

Any H-regular vertex is also strongly regular, and any saddle point
is non-degenerate critical in the sense of Definition~\ref{Morse function} if
its (total) multiplicity is $1$.

\end{thm}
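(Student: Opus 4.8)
The plan is to analyze the link of a vertex $v$ in a combinatorial $3$-manifold, which is a PL $2$-sphere, and to use the criteria established in Section~\ref{sec:our-critical-points} to translate homological regularity into strong regularity. Set $a=f(v)$ and consider $lk^-(v) \subseteq lk(v) \cong S^2$ together with its complement $lk^+(v)$, which meet along the common boundary curve system $C := f^{-1}(a) \cap lk(v)$. Since $f$ is generic, $lk^-(v)$ is a full subcomplex determined by the vertices of $lk(v)$ below level $a$, so it is a compact surface with boundary embedded in $S^2$; the same holds for $lk^+(v)$. By Lemma~\ref{strongly regular}, $v$ is strongly regular if and only if $lk^-(v)$ is a PL $2$-ball. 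The classification into minima ($lk^-(v)=\emptyset$), maxima ($lk^-(v)=lk(v)=S^2$) and saddles (everything else) is immediate, as in the surface case; the multiplicity statement follows from the formula $\widetilde H_k(f_a, f_a\setminus\{v\}) \cong \widetilde H_{k-1}(lk^-(v))$ together with Alexander duality $\widetilde H_{2-k}(lk^+(v)) \cong \widetilde H_{k-1}(lk^-(v))$ in $S^2$.

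The heart of the matter is the H-regular case: $v$ is neither a minimum nor a maximum, and $\widetilde H_*(lk^-(v);\mathbb F)=0$ for every field $\mathbb F$, and we must show $lk^-(v)$ is a $2$-ball. The key point is that $lk^-(v)$ is a compact surface-with-boundary sitting inside $S^2$, hence orientable and planar. First I would argue that $lk^-(v)$ is connected: a compact surface in $S^2$ with $\widetilde H_0 = 0$ has one component. Next, a connected compact planar surface with $h$ boundary curves has first Betti number $h-1$ (over any field), so $\widetilde H_1(lk^-(v);\mathbb F)=0$ forces $h=1$, i.e. $lk^-(v)$ has a single boundary circle. A connected compact orientable surface with one boundary circle and trivial first homology is the disk (genus $0$), so $lk^-(v)$ is a $2$-ball by the classification of surfaces. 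Then Lemma~\ref{strongly regular} gives strong regularity. For the non-degeneracy claim about saddles of total multiplicity $1$: total multiplicity $1$ means exactly one of $\widetilde H_0(lk^-(v))$, $\widetilde H_1(lk^-(v))$ has rank $1$ and the other vanishes; in the index-$1$ case $\widetilde H_0(lk^-(v))$ has rank $1$ so $lk^-(v)$ has two components, each (by the planarity/Betti argument) a disk, hence $lk^-(v)$ is a regular neighborhood of an unknotted $0$-sphere in $S^2$, and the corresponding characterization for non-degenerate critical points applies; the index-$2$ case follows by applying this to $-f$ and using the duality in Theorem~\ref{Morse duality}.

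The main obstacle is entirely contained in the phrase ``surface embedded in $S^2$'': one must be sure that $lk^-(v)$, a priori just a subcomplex of the $2$-sphere $lk(v)$ that happens to be a $2$-manifold-with-boundary, really is a tame (locally flat) compact surface so that the classification of surfaces and the planar Betti-number count apply, and that its boundary is a disjoint union of PL circles. This is where one uses that $lk(v)$ is a genuine combinatorial $2$-sphere and $lk^-(v)$ is a full subcomplex whose underlying space is a $2$-manifold with boundary; a short local argument at each vertex and edge of $C = f^{-1}(a)\cap lk(v)$ shows $C$ is a $1$-manifold, i.e. a disjoint union of circles, and that $lk^-(v)$ and $lk^+(v)$ are the closures of the two sides. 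Everything after that is the classification of surfaces plus the homology computations already set up in Section~\ref{sec:critical-points}. Note that the dimension being $3$ is essential only insofar as it makes $lk(v)$ a $2$-sphere, where the Schoenflies phenomenon is automatic; this is precisely what fails in higher dimensions.
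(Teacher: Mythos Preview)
Your proposal is correct and follows essentially the same line as the paper's proof. In both cases the core observation is that $lk(v)\cong S^2$, so $lk^-(v)$ is a compact planar surface with boundary, and acyclicity forces it to be a disc; then Lemma~\ref{strongly regular} finishes the H-regular case, while for a multiplicity-$1$ saddle one identifies $lk^-(v)$ and $lk^+(v)$ as two discs and a cylinder (in some order) and reads off the chart of Definition~\ref{Morse function}. The only stylistic differences are that you reach ``$lk^-(v)$ is a disc'' via the explicit Betti-number count $b_1=h-1$ for a planar surface with $h$ boundary circles, whereas the paper phrases it as ``homology point $\Rightarrow$ homotopy point $\Rightarrow$ contractible $\Rightarrow$ disc''; and for the saddle you invoke the regular-neighborhood characterization of non-degenerate critical points together with the $f\leftrightarrow -f$ duality, whereas the paper treats both indices at once by naming the two discs and the cylinder and building the chart directly. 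One small imprecision worth tightening: $lk^-(v)$ in the paper's sense is the actual sublevel $\{x\in lk(v):f(x)\le a\}$, not the full subcomplex on the low vertices; the two are homotopy equivalent, but it is the former that is literally a $2$-manifold with boundary, which is what your surface-classification argument needs.
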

\begin{proof}
Let $v$ be a H-regular vertex (not a local minimum) 
with 
$$H_0(lk^-(v);\mathbb{F}) \cong \mathbb{F}, \quad 
H_1(lk^-(v)) = 0 \ \mbox{ and } \ H_2(lk^-(v)) = 0.$$
Therefore $lk^-(v) = f_a  \cap lk(v)$ is a subset of 
$lk(v) \cong S^2$ which is a homology point. 
This implies that it is a homotopy point also, hence contractible. 
Consequently, $lk^-(v) \subset S^2$  is a disc since it is also
a compact 2-manifold with boundary. Its complement is a disc also.
Then we can apply Lemma~\ref{strongly regular}.

Now let $v$ be a saddle point with total multiplicity 1.
This means that $lk^-(v)$ and $lk^+(v)$ are subsets of a 2-sphere
with homology of a 0-sphere and a 1-sphere, respectively (in any order).
So there are two discs in $lk^-(v)$ and a cylinder in $lk^+(v)$ 
or vice versa. Let us pick one point in each disc and a circle in the cylinder
as ``souls''. Then the cones from $v$ determine one coordinate direction
with decreasing $f$ and two directions with increasing $f$ (or vice versa).
This defines the chart according to Definition~\ref{Morse function}.
\end{proof} 

\begin{thm} \label{dimension 4}
Let $M$ be a PL $4$-manifold with a generic PL function 
$f \colon M \to \mathbb{R}$.
Then any H-regular vertex is also strongly regular.
\end{thm}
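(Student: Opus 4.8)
The plan is to work inside the link $L:=lk(v)$, which is a combinatorial $3$-sphere, and to reduce the statement to the $3$-dimensional PL Schoenflies theorem. By Lemma~\ref{strongly regular} it suffices to prove that $lk^-(v)$ is a PL $3$-ball, so I set $a=f(v)$ and first record the relevant geometry: since $v\notin L$ and $f$ is generic, $f|_L$ is a generic PL function on $L\cong S^3$ that attains the value $a$ at no vertex, hence the level set $\Sigma:=f^{-1}(a)\cap L$ is a closed PL surface in $S^3$ which separates $L$; moreover $lk^-(v)=\{f\le a\}\cap L$ and $lk^+(v)=\{f\ge a\}\cap L$ are subcomplexes of $L$ with $lk^-(v)\cup lk^+(v)=L$ and $lk^-(v)\cap lk^+(v)=\Sigma$. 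An H-regular vertex is neither a local maximum nor a local minimum, so $\Sigma$ is non-empty and both sides are non-empty.

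Next I would extract the homological content of H-regularity. By the excision isomorphism $\widetilde H_k(f_a,f_a\setminus\{v\})\cong\widetilde H_{k-1}(lk^-(v))$ recalled in Section~\ref{sec:critical-points}, H-regularity says exactly that $lk^-(v)$ is acyclic over every field, hence also over $\mathbb Z$; and by Alexander duality in $S^3$ — equivalently, by the duality of Theorem~\ref{Morse duality} applied with $d=4$ — the complement $lk^+(v)$ is acyclic as well. Plugging this into the Mayer--Vietoris sequence of $S^3=lk^-(v)\cup lk^+(v)$ with intersection $\Sigma$ makes it collapse to $\widetilde H_n(\Sigma)\cong\widetilde H_{n+1}(S^3)$ for all $n$ and all coefficient fields. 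In particular $\Sigma$ is connected and has the homology of a $2$-sphere over every field, and since the only closed connected surface with the $\mathbb Z/2$-homology of $S^2$ is $S^2$, this forces $\Sigma\cong S^2$.

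Now $\Sigma$ is a PL $2$-sphere embedded in the $3$-sphere $L$, and I would invoke the PL Schoenflies theorem, which holds in dimension $3$ (Section~\ref{sec:Polyhedra}): $\Sigma$ bounds a PL $3$-ball on each of its two sides. Since $\Sigma$ is connected, $L\setminus\Sigma$ has exactly two components, on one of which $f<a$ and on the other $f>a$, and $lk^-(v)$ is the closure of the first. Therefore $lk^-(v)$ is a PL $3$-ball, and Lemma~\ref{strongly regular} with $d=4$ shows that $v$ is strongly regular.

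The crux of the argument — and the place where it fails in dimension $5$ — is the step that passes from $\Sigma$ being a homology sphere to $lk^-(v)$ being a ball. In dimension $4$ this succeeds for two independent reasons: the classification of surfaces turns a homology $2$-sphere at once into $S^2$, and PL Schoenflies is a theorem in $S^3$. For the link of a vertex in a $5$-manifold the same Mayer--Vietoris computation only yields that $\Sigma$ is a homology $3$-sphere, which need not be $S^3$, and even when it is, the PL Schoenflies problem in $S^4$ is open.
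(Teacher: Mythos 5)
Your proof is correct and follows essentially the same route as the paper: reduce via Lemma~\ref{strongly regular} to showing that $lk^-(v)$ is a PL $3$-ball, identify its boundary as a single $2$-sphere, and invoke the $3$-dimensional PL Schoenflies theorem. The only (immaterial) difference is that the paper identifies the boundary sphere by an Euler-characteristic count ($\chi(\partial lk^-(v))=2\chi(lk^-(v))=2$ for the $\mathbb{Z}$-acyclic compact $3$-manifold $lk^-(v)$, followed by ruling out further boundary components), whereas you use Alexander duality and Mayer--Vietoris together with the classification of surfaces.
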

\begin{proof}
Let $v$ be a H-regular vertex (not a local minimum)  with 
$$H_0(lk^-(v);\mathbb{F}) \cong \mathbb{F}, \quad 
H_1(lk^-(v)) = 0, \quad H_2(lk^-(v)) = 0 \ \mbox{ and } \ H_3(lk^-(v)) = 0$$
for any field $\mathbb{F}$.
Therefore $lk^-(v)$ is a subset of $lk(v) \cong S^3$ which is an
homology point for arbitrary $\mathbb{F}$, hence it is also
a homology point for $\mathbb{Z}$, in other words: it is $\mathbb{Z}$-acyclic.
The following argument is taken from \cite{LZ}:
$lk^-(v)$ is a compact 3-manifold  
which is $\mathbb{Z}$-acyclic,
so the Euler characteristic is $\chi (lk^-(v)) = 1$.
The Euler characteristic of the boundary is twice the Euler characteristic
of the entire manifold, so $\chi = 2$ for the boundary which therefore 
contains a 2-sphere as one connected component, 
tamely (or locally flat) embedded into a polyhedral $S^3$.
Then by the 3-dimensional Schoenflies theorem in PL \cite{Luft}
it bounds a 3-ball in $S^3$ on either side.
This in turn shows that in our case there is no other component of the
boundary since it would contradict the assumption that $lk^-(v)$ is acyclic.
Then we can apply Lemma~\ref{strongly regular}.
\end{proof} 
It is remarkable that embeddings of the dunce hat into the 3-sphere
cannot provide counterexamples since their regular 
neighborhoods must be 3-balls \cite{BeL}.

\bigskip
{\sc Remark:} In higher dimensions $d \geq 5$ one obstruction 
is that a homology point
contained in a vertex link is not necessarily a homotopy point, 
see Section 6 below.
In particular there are acyclic 2-complexes in the 4-sphere that
are not contractible \cite{LZ}, moreover there are particular embeddings of the 
contractible dunce hat into the 4-sphere with regular neighborhoods that
are again contractible but not 4-balls \cite{Zee}.
These phenomena make it impossible to carry over the proofs above
to dimensions higher than $d=4$.

\section{Counterexamples in higher dimensions} 
\label{sec:high-dimensions}
{\sc Example 1:} (Critical point of total multiplicity 1 containing a knot)

We start with an ordinary knot built up by edges in a combinatorial 3-sphere.
A concrete example is the 6-vertex trefoil knot in the 1-skeleton
of the Br\"uckner-Gr\"unbaum sphere with 8 vertices, see \cite[Fig.4]{Ku1}.
After barycentric subdivision the knot coincides with the full
subcomplex spanned by its vertices.
This combinatorial 3-sphere can be the link of a vertex $v$ in 
a 4-manifold.
Define a generic PL function $f$ with $f(v) = 0, f(x) < 0$
for all vertices $x$ on the knot, and $f(y) > 0$ for all the other
vertices $y$ in the 3-sphere.
This vertex $v$ will be critical for $f$ of index 2 and multiplicity 1,
so homologically it behaves like a non-degenerate critical point
of index 2 of a PL Morse function.
However, the critical level will
be a cone from $v$ to a knotted torus in $lk(v)$. Therefore $v$ is not
a non-degenerate critical point in the sense of Definition~\ref{Morse function}.

\medskip
{\sc Example 2:} (H-regular point that is not strongly regular)

There are homology spheres that are not homotopy spheres.
The most prominent example is the Poincar\'{e} sphere $\Sigma^3$
that can be defined as 
the quotient of the 3-sphere $S^3$ by the standard action of the
binary icosahedral group (this action can be visualized 
in the symmetry group of the 120-cell).
It admits a simplicial triangulation with only 16 vertices \cite{BjL}.
By removing an open 3-ball we obtain a space that is a homology point
but not a homotopy point since its fundamental group does not vanish.
By removing one open vertex star we find an example with 15 vertices
$v_1, \ldots, v_{15}$.
This simplicial complex $C$  can be embedded into a high dimensional 
combinatorial sphere $S^n_k$ with vertices $v_1, \ldots, v_k$, $k > 15$
such that $C$ is the full complex spanned
by those 15 vertices $v_1, \ldots, v_{15}$.
Then we can build a combinatorial $(n+1)$-manifold $M$ such that the
star of one vertex $v_0$ is this combinatorial sphere $S^n_k$.
The simplest example seems to be the suspension $S(S^n_k)$
of this combinatorial sphere $S^n_k$ with altogether $k+2$ vertices.
Next we define a simplexwise linear function $f$ on $M$ in such a way that
$$f(v_1) < f(v_2) < \cdots < f(v_{15}) < f(v_0) < f(v_{16}) < f(v_{17}) 
< \cdots < f(v_k)$$
and with arbitrary but distinct values for all the other vertices of $M$.
Then the vertex $v_0$ is H-regular for $f$ since
in the link below the level and above the level the homology is trivial.
However, it is not strongly regular since in the open vertex star
the sublevel of $v_0$ is not contractible and is therefore not an open ball.
In other words: Homology is unable to detect that $v_0$ is a 
non-regular point. It behaves exactly like any of the points in the
interior of a top-dimensional simplex (which of course is strongly regular).

\medskip
{\sc Example 3:} (H-regular point that is not strongly regular)

There is a $\mathbb{Z}$-acyclic but not contractible 2-dimensional 
simplicial complex $K$ with 23 vertices polyhedrally
embedded into a polyhedral 4-sphere \cite{LZ}.
This can be extended to a triangulation of the 4-sphere with additional vertices
outside $K$ such that $K$ coincides
with the full subcomplex spanned by the 23 original vertices.
As in Example~1 above one can define a generic PL function $f$ on some
PL 5-manifold such that
in the link of a vertex $v_0$ the sublevel is spanned by those 23 
vertices. Consequently $lk^-(v_0)$ is acyclic, so $v_0$ is 
H-regular for $f$. It is not strongly regular since $lk^-(v_0)$ 
is not contractible, so it cannot be a 4-ball and $f_a \cap st(v_0)$ cannot 
be a 5-ball.

\smallskip
By further embedding of $K$ into higher dimensional spheres
it follows that a regular neighborhood of $K$ is always 
homologically trivial but not contractible. 
Consequently, for any $d \geq 5$ there is an example of a generic 
PL function on a PL $d$-manifold with a H-regular critical point that 
is not strongly regular. This bound is optimal by the results of Section 5.

\medskip
{\sc Example 4:} (Degenerate critical point of total multiplicity 1)

It is well known that the double suspension $S(S(\Sigma^3))$
of the Poincar\'{e} sphere $\Sigma^3$ in Example 2
is homeomorphic with the sphere $S^5$ (the so-called 
\emph{Edwards sphere} \cite{Lu1}).
However, since the link of certain edges is precisely $\Sigma^3$, 
the triangulation is not combinatorial and does not
induce a PL structure.
Nevertheless, we can define generic PL functions adapted to  
this 20-vertex triangulation of $S(S(\Sigma^3))$.
If this 5-sphere occurs as the link of a vertex $v$ in a 6-manifold,
then we can find a generic PL function such that $f(v) = 0$,
$f(x) < 0$ for all vertices of $\Sigma^3$ and $f(x) > 0$ for the others.
Then $v$ is a H-critical point that homologically behaves like a 
non-degenerate critical point
of index 4 and multiplicity 1 but it is degenerate, so $f$ will
not be a PL Morse function.

\section{A special obstruction: the dunce hat}
\label{sec:dunce-hat}
Homology is a weaker concept than homotopy. So one might conjecture
that a vertex $v$ is strongly regular whenever both $lk^-(v)$ and $lk^+(v)$
 are contractible, so that no homotopy group would detect anything 
critical (one might call this \emph{homotopically regular}). 
The results of Section 5 show that this is true for
generic PL functions on $k$-manifolds with $k \leq 4$.
Here we are going to show that this systematically fails to hold in
dimensions $k \geq 5$.

\medskip
The dunce hat is known to be a 2-dimensional space that
is contractible \cite{Zee}. Any triangulation of it is not collapsible
since there is no edge to start the collapse.
There are embeddings into the $k$-sphere for any $k \geq 3$ \cite{BeL}.
If such a triangulated dunce hat occurs as the spanning full subcomplex
of $lk^-(v)$ then neither
homology nor homotopy will detect that $v$ is a critical point.
However, $v$ will be strongly regular if and only if a regular neighborhood
of the embedded dunce hat is a $k$-ball.

\medskip
By the results of \cite{Ma,Zee} there exist embeddings of the dunce hat
into $S^4$ such that a tubular neighborhood is not a 4-ball but
Mazur's contractible 4-manifold with boundary. The boundary must 
be a homology 3-sphere. Here we present a simple model
based on an 8-vertex triangulation.
\begin{figure}[hbt]
  \centering
  \includegraphics{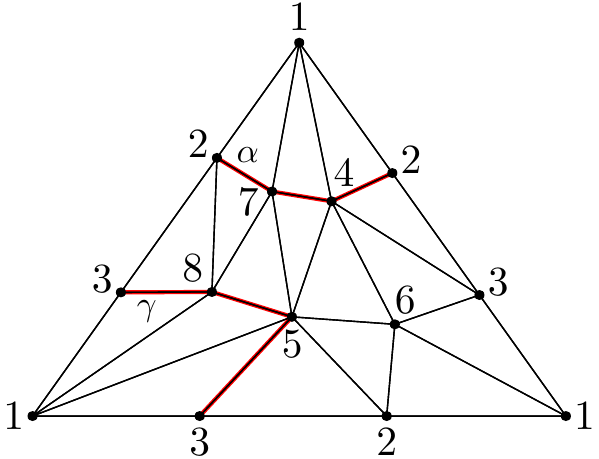}
\caption{A triangulated dunce hat, and two cycles $\alpha$ and
  $\gamma$ in the link of vertex~1.}
	\label{dunce}
\end{figure}
We start with the triangulation shown in Figure~\ref{dunce}.
It is equivalent to 
the triangulation used in \cite{BeL}.
Here is the list of triangles:
$$
124, 234, 346, 136,
126, 256, 235, 135,
127, 147, 278, 457, 578, 238, 138, 158,
456.
$$
It has the special property that any triangle contains either 1 or 8
or two vertices with consecutive labels $j,j+1$.
This implies that it can be embedded into the boundary complex of
the cyclic 5-polytope $C_5(8)$ with 8 vertices 1, 2, 3, \ldots, 7, 8
in that order.
Using Gale's evenness condition \cite{Zie},
we find the missing triangles: 246, 247, 257, 357.
The main question is: Is a tubular neighborhood of the 2-complex in
the 4-dimensional boundary complex of the cyclic 5-polytope a 4-ball or not?
It is certainly contractible since the dunce hat is.
One special property of the embedding is easily seen:
The two cycles $\alpha$ and $\gamma$ in \cite{Zee} are $(2472)$ and $(3583)$,
and these two are linked in the link of the vertex 1.
In fact, this is the cyclic 4-polytope $C_4(7)$ with 7 vertices, and that
contains the 7-vertex torus (see Figure~\ref{fig:torus}). The two cycles represent
$(1,1)$-knots on this torus, and any two of them are linked
like Hopf fibers.
Then \cite[Conjecture~3]{Zee} would imply that a tubular neighborhood
of the embedded dunce hat is not a 4-ball.
However, since we do not know whether this conjecture has been decided,
we
constructed a tubular neighborhood $M$, using the \textsc{Sage}\footnote
{\url{http://www.sagemath.org/}}
mathematics software
system, and
checked the fundamental group of its boundary~$\partial M$.
The fundamental group turned out to have a presentation
with two generators $u,v$ and the relations
$uvu^{-4}v = 1 = (v^2u^{-1}v^{-1}u^{-1})^2v$.
By introducing the extra relation $u^5 = 1$
we obtain $uv = (uv)^{-1} = v^{-1}u^{-1}$ and consequently
$$u^5 = v^7 = (uv)^2 = 1.$$ This group is known to be infinite
\cite[Sect.~5.3]{CM}. It coincides with the group of orientation preserving
automorphisms of the regular $(7,5)$-tessellation of the hyperbolic plane,
in accordance with \cite{Ma}.

As an independent confirmation, Benjamin Burton (private
communication)
analyzed $M$ with the
\textsc{Regina}
software for low-dimensional topology\footnote
{\url{https://regina-normal.github.io/}}.
\textsc{Regina}
 could
 simplify
 $\partial M$ 
 to
 9 tetrahedra, which it could recognize
in its built-in census database
 as
a Seifert fibred space, SFS [S2: (2,1) (5,1) (7,$-5$)].
In summary,
the result was in both cases that the boundary
 $\partial M$ of the tubular neighborhood
is not a 3-sphere.

\begin{cor}
A regular neighborhood of the $8$-vertex dunce hat above in the
boundary complex of the cyclic polytope $C_5(8)$ is a contractible
$4$-manifold with boundary but not a $4$-ball since its
boundary is not a sphere.
\end{cor}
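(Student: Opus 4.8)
The plan is to verify the two assertions in the corollary separately: first that a regular neighborhood $M$ of the $8$-vertex dunce hat $D$ in $\partial C_5(8)$ is a contractible $4$-manifold with boundary, and second that $\partial M$ is not a $3$-sphere, from which it follows that $M$ cannot be a $4$-ball.

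For contractibility, I would argue as follows. The boundary complex $\partial C_5(8)$ is a combinatorial $4$-sphere, so it is in particular a combinatorial $4$-manifold; by the general theory of regular neighborhoods in PL manifolds (see \cite[Ch.~3]{RS}), a regular neighborhood $M$ of any subcomplex is a compact PL $4$-manifold with boundary that collapses to (a subdivision of) that subcomplex, hence is homotopy equivalent to it. Since the dunce hat is a well-known contractible $2$-complex \cite{Zee}, $M$ is contractible. (One should check that the listed $17$ triangles indeed form a triangulation combinatorially equivalent to the standard dunce hat; this is the triangulation of \cite{BeL}, and the identification pattern on the boundary hexagon can be read off directly.) In particular $M$ is a compact contractible $4$-manifold, so $\partial M$ is a homology $3$-sphere by the half-lives-half-dies computation with the long exact sequence of the pair $(M,\partial M)$ together with Poincar\'e--Lefschetz duality: $H_*(\partial M)\cong H_*(S^3)$.

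For the second assertion, the key point is that the homology $3$-sphere $\partial M$ has nontrivial fundamental group. Here I would rely on the explicit computation: one realizes $M$ concretely --- the triangles of $D$ are embedded in $\partial C_5(8)$ via Gale's evenness condition, with the four ``missing'' triangles $246,247,257,357$ among the facets --- and computes $\pi_1(\partial M)$ from the resulting simplicial complex. This yields the presentation $\langle u,v \mid uvu^{-4}v = 1,\ (v^2u^{-1}v^{-1}u^{-1})^2 v = 1\rangle$. To see this group is nontrivial (indeed infinite), I would pass to the quotient by the extra relation $u^5=1$; a short manipulation gives $uv=v^{-1}u^{-1}$, hence $(uv)^2=1$, and combined with the first relation one gets $v^7=1$, so the quotient is the $(2,5,7)$ triangle group $\langle u,v\mid u^5=v^7=(uv)^2=1\rangle$, which is a cocompact Fuchsian group and hence infinite \cite[Sect.~5.3]{CM}. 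Since $\pi_1(\partial M)$ surjects onto an infinite group it is itself nontrivial, so $\partial M\neq S^3$. The independent confirmation via \textsc{Regina}, recognizing $\partial M$ as the Seifert fibered space $\mathrm{SFS}\,[S^2\colon (2,1)(5,1)(7,-5)]$, corroborates this.

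The main obstacle is the honest verification of the presentation of $\pi_1(\partial M)$: this requires actually building the regular neighborhood $M$ inside $\partial C_5(8)$ as a simplicial complex, extracting a triangulation of $\partial M$, and running a Tietze-transformation simplification of the Wirtinger-type presentation coming from its $2$-skeleton --- which is exactly why the computation was delegated to \textsc{Sage} and cross-checked with \textsc{Regina}. Everything else (contractibility of $M$, the homology-sphere property of $\partial M$, and the group-theoretic fact that the $(2,5,7)$ triangle group is infinite) is routine once that presentation is in hand; the fact that $M$ is not a $4$-ball is then immediate, since the boundary of a PL $4$-ball is a PL $3$-sphere with trivial fundamental group.
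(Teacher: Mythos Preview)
Your proposal is correct and follows essentially the same approach as the paper: the corollary is stated as an immediate consequence of the preceding discussion, which establishes contractibility of $M$ via the dunce hat and non-sphericity of $\partial M$ via the \textsc{Sage} computation of $\pi_1(\partial M)$, the passage to the infinite $(2,5,7)$ triangle group quotient, and the \textsc{Regina} cross-check. You have merely made explicit a few routine points (regular-neighborhood homotopy equivalence, the homology-sphere property of $\partial M$) that the paper leaves implicit.
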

\begin{cor} {\rm (explicit triangulation)}

The second barycentric subdivision of the cyclic polytope $C_5(8)$ contains
an explicit triangulation of a contractible $4$-manifold with boundary which
is not a $4$-ball.
\end{cor}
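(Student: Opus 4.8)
The plan is to simply assemble the two preceding corollaries into the combinatorial statement, so the work is bookkeeping about which simplicial complex carries the promised triangulation. First I would recall that the previous corollary produced a regular neighborhood $M$ of the $8$-vertex dunce hat sitting inside the boundary complex $\partial C_5(8)$, and that $M$ is a contractible $4$-manifold with boundary but not a $4$-ball because $\partial M$ was shown (via \textsc{Sage}, and confirmed independently via \textsc{Regina}) to be a Seifert fibred homology $3$-sphere rather than $S^3$. So the topological content is already in hand; what remains is to exhibit $M$ as an explicit \emph{subcomplex} of an explicit triangulation.

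The key step is to realize the regular neighborhood combinatorially. The standard fact from PL topology (see \cite[Ch.~3]{RS}) is that if $L$ is a full subcomplex of a simplicial complex $X$, then the closed simplicial neighborhood of $L$ in the second barycentric subdivision $X''$ — that is, the union of all closed simplices of $X''$ meeting $L$ — is a regular neighborhood of $|L|$ in $|X|$. Here I would take $X = \partial C_5(8)$ (a triangulated $4$-sphere, since the boundary complex of a simplicial polytope is simplicial) and $L$ the $8$-vertex dunce hat, which by construction is the full subcomplex of $\partial C_5(8)$ spanned by the vertices $1,\dots,8$ together with the listed triangles; fullness is exactly the property verified earlier when the missing triangles $246,247,257,357$ were identified via Gale's evenness condition. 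Passing to $X'' = (\partial C_5(8))''$, the simplicial neighborhood $N(L'',X'')$ is then a genuine subcomplex of the second barycentric subdivision, and it is PL homeomorphic to the regular neighborhood $M$ of the previous corollary. Since regular neighborhoods of a fixed subpolyhedron in a PL manifold are unique up to PL homeomorphism (again \cite[Ch.~3]{RS}), this subcomplex inherits all the properties of $M$: it triangulates a contractible $4$-manifold with boundary, and its boundary is the non-spherical homology $3$-sphere identified above, so it is not a $4$-ball.

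I expect the only mild obstacle to be the standard care required in invoking the regular-neighborhood machinery: one must check that $\partial C_5(8)$ is a combinatorial $4$-sphere (true, being the boundary of a simplicial $5$-polytope, hence each vertex link is a combinatorial $3$-sphere), that the dunce hat is embedded as a \emph{full} subcomplex so that its closed star in the barycentric subdivision collapses onto it, and that ``second barycentric subdivision'' is indeed the subdivision in which the simplicial neighborhood is automatically regular — the first barycentric subdivision makes the subcomplex full, and the second is needed so that the neighborhood is itself full and the collapse is simplicial. None of this requires new computation beyond what the previous corollaries already supply; the statement is essentially a repackaging of the regular neighborhood $M$ as a concrete subcomplex of $(C_5(8))''$.
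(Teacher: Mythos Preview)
Your approach is essentially the paper's: take the closed simplicial neighborhood of the dunce hat in the second barycentric subdivision of $\partial C_5(8)$, which is a regular neighborhood and hence inherits the properties established in the preceding corollary.

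One correction, though: your claim that the dunce hat is a \emph{full} subcomplex of $\partial C_5(8)$ is false, and the identification of the missing triangles via Gale's evenness condition does not establish it. The dunce hat uses all eight vertices of $\partial C_5(8)$, so the full subcomplex spanned by its vertex set is the entire $4$-sphere; the ``missing triangles'' $246, 247, 257, 357$ are precisely $2$-faces of $\partial C_5(8)$ whose vertices all lie in the dunce hat but which are not themselves simplices of the dunce hat, witnessing non-fullness rather than fullness. Fortunately you recover at the end by (correctly) noting that the first barycentric subdivision makes any subcomplex full and the second provides the regular neighborhood, so the overall argument survives --- but you should delete the erroneous fullness claim about the original complex.
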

For the construction one just has to take the closed subcomplex
of all simplices that meet the embedded dunce hat in $C_5(8)$ above.
According to \cite{Be} this triangulation is not locally constructible.
\begin{cor}
There is a generic PL function on a $5$-manifold with a vertex $v$ that
is H-regular but not strongly regular and -- in addition --
with the special property
that both $lk^-(v)$ and $lk^+(v)$ are contractible. 
There are examples of this kind in every dimension $d \geq 6$
\cite{Ker}\rlap.\footnote
{see \url{https://en.wikipedia.org/wiki/Mazur_manifold}}
\end{cor}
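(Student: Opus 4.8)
The plan is to reuse the two corollaries just proved, together with the suspension construction already employed in Examples~1--3. For $d=5$ I would first produce a combinatorial $4$-sphere in which an $8$-vertex dunce hat $D$ sits so that it has \emph{both} a contractible regular neighborhood that is not a $4$-ball \emph{and} a contractible complement; then I would realize this $4$-sphere as a vertex link and choose the vertex values of a generic PL function appropriately; the case $d\ge 6$ is the same argument with the $4$-dimensional Mazur manifold replaced by a higher-dimensional contractible non-ball manifold.

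For $d=5$: let $W$ be a regular neighborhood of the $8$-vertex dunce hat $D$ inside a second barycentric subdivision of the cyclic polytope $C_5(8)$. By the corollary above, $W$ is a contractible PL $4$-manifold whose boundary is a homology $3$-sphere but not a $3$-sphere, so $W$ is not a PL $4$-ball, and, being a regular neighborhood of $D$, it collapses onto $D$. By Mazur~\cite{Ma} (cf.\ \cite{Zee}) this $W$ is Mazur's manifold, whose double $W\cup_{\partial W}W$ is PL homeomorphic to $S^4$. Consequently there is a combinatorial $4$-sphere $Q$, triangulated so that $D$ becomes a full subcomplex, which is split by a copy of $\partial W$ into a regular neighborhood $N\cong W$ of $D$ and a complementary region $N'$ equal to the second copy of $W$, hence itself contractible.

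Now take $M$ to be the suspension $S(Q)$, a combinatorial $5$-sphere, and let $v$ be one of its two suspension vertices, so that $lk(v)=Q$. Pick a generic PL function $f$ on $M$, with distinct values at all vertices, that assigns to the vertices of $D$ values below $f(v)=a$ and to all remaining vertices of $Q$, as well as to the second suspension vertex, values above $a$. Then $lk^-(v)=lk(v)\cap f_a$ is a regular neighborhood in $Q$ of the full subcomplex $D$, hence PL homeomorphic to $N\cong W$: it is contractible, so $H_*(f_a,f_a\setminus\{v\})\cong\widetilde H_{*-1}(lk^-(v))$ vanishes and $v$ is H-regular, yet $lk^-(v)$ is not a PL $4$-ball, so $v$ is not strongly regular by Lemma~\ref{strongly regular}. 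Dually $lk^+(v)=lk(v)\cap f^a$ is a regular neighborhood of the complementary region $N'\cong W$ and is therefore likewise contractible. Thus $M$ carries the required example, with the extra property that both $lk^-(v)$ and $lk^+(v)$ are contractible.

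For $d\ge 6$ one argues identically, replacing $W$ by a compact contractible PL $(d-1)$-manifold that is not a $(d-1)$-ball: by~\cite{Ker} such a $W$ exists, bounded by a PL homology $(d-2)$-sphere $\Sigma\neq S^{d-2}$, and since $d-1\ge 5$ the PL Poincar\'e theorem makes the double $W\cup_\Sigma W$ a combinatorial $(d-1)$-sphere $Q$, split by $\Sigma$ into two copies of $W$; taking $M=S(Q)$, a vertex $v$ with $lk(v)=Q$, and $f$ with the interior vertices of one copy of $W$ below level $f(v)$ and all others above gives $lk^-(v)\cong W$ (contractible, not a ball, so $v$ is H-regular but not strongly regular) and $lk^+(v)\cong W$ (contractible). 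The main obstacle is the decomposition step: a priori the complement of a contractible PL $4$-manifold in $S^4$ is known only to be $\mathbb Z$-acyclic, so obtaining $lk^+(v)$ genuinely \emph{contractible} really requires Mazur's doubling theorem $S^4\cong W\cup_{\partial W}W$ (and its higher-dimensional analogue through the PL Poincar\'e conjecture). What remains is the routine but fussy combinatorial bookkeeping needed to present $Q$ as an honest combinatorial sphere in which the relevant subcomplex is full and its closed star is a regular neighborhood PL homeomorphic to $W$, so that $lk^-(v)$ is literally that regular neighborhood.
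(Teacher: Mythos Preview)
Your argument is sound but takes a genuinely different route from the paper's. The paper stays inside the cyclic-polytope picture throughout: it takes a combinatorial $5$-manifold containing a vertex $v$ with $lk(v)=\partial C_5(8)$ (concretely $\partial C_6(9)$), passes to the second barycentric subdivision, and chooses $f$ so that the open regular neighborhood of the embedded $8$-vertex dunce hat lies below $f(v)$ and its open complement above; then $lk^-(v)$ is that regular neighborhood (contractible but not a $4$-ball by the preceding corollary), and the level $f^{-1}(a)\cap lk(v)$ is a homology $3$-sphere that is not $S^3$, contradicting the characterization of Lemma~\ref{strongly regular}. You instead manufacture the vertex link as the \emph{double} $Q=W\cup_{\partial W}W$ of the Mazur manifold and suspend that. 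What your route buys is that the contractibility of $lk^+(v)$ is transparent --- by uniqueness of regular neighborhoods both halves of $Q$ are copies of $W$ --- whereas the paper's brief argument does not spell out why the complement of the dunce hat's regular neighborhood in $\partial C_5(8)$ is itself contractible (and, as you correctly note, mere $\mathbb{Z}$-acyclicity of the complement is all that comes for free). Conversely, the paper's construction is completely explicit --- an actual polytope and an actual subdivision --- while yours needs the combinatorial bookkeeping you flag at the end. For $d\ge 6$ the paper gives no argument beyond the citation of~\cite{Ker}; your doubling construction, with the high-dimensional PL Poincar\'e theorem identifying the double as $S^{d-1}$, supplies exactly the details the paper omits.
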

For the construction we start with a combinatorial $5$-manifold
containing a vertex $v$ whose link is the boundary of the cyclic 
polytope $C_5(8)$; a concrete example is the cyclic polytope $C_6(9)$.
Then we define a generic PL function $f$ on the second
barycentric subdivision such that the open regular neighborhood of
the embedded dunce hat lies below $f(v)$ and its open complement lies above.
Then the level of $v$ itself in $lk(v)$ is a homology sphere but not a sphere,
in contrast with the characterization of Lemma~\ref{strongly regular}. 
\section{Computational aspects: Is regularity decidable?}
\label{sec:decidable}

The first problem is the \emph{manifold recognition problem}:
Given a pure simplicial complex of dimension $d$, can we
algorithmically decide whether it is the triangulation of
a combinatorial manifold?
More precisely, can we algorithmically decide whether all vertex links
are $(d-1)$-dimensional combinatorial spheres?
This is trivial for $d=1$ and fairly easy for $d=2$.
For $d=3$ we can decide whether a vertex link is a connected 2-manifold,
and then the Euler characteristic $\chi = 2$ is a sufficient criterion
for being a 2-sphere.
For $d=4$ we can first decide whether a certain vertex link is a connected
3-manifold. Then we can apply the sphere recognition algorithm
of A.~Mijatovi\'{c} \cite{Mij} and obtain:

\begin{cor}
It is algorithmically decidable whether a given simplicial complex
of dimension $d$ is a combinatorial $d$-manifold whenever $d \leq 4$. 
\end{cor}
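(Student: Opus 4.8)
The plan is to prove decidability for $d\le 4$ by reducing, for each vertex link, to a sequence of algorithmic tests that are known to be effective. The statement to prove is the final corollary: given a pure $d$-dimensional simplicial complex with $d\le 4$, one can decide whether every vertex link is a combinatorial $(d-1)$-sphere.

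First I would dispose of the low-dimensional cases. For $d=1$ a vertex link is a finite set of points, and being a combinatorial $0$-sphere just means it has exactly two points — trivially checkable. For $d=2$ a vertex link is a $1$-complex, and being a combinatorial $1$-sphere means being a single cycle; one checks that the link is connected and every vertex has degree $2$. For $d=3$ a vertex link $L$ is a $2$-complex; first test whether $L$ is a closed connected surface (every edge in exactly two triangles, the link of every vertex in $L$ is a single cycle, and $L$ connected), and if so compute $\chi(L)$: a combinatorial $2$-sphere is exactly a connected closed surface with $\chi=2$, since any triangulation of $S^2$ is combinatorial. Each of these is a finite combinatorial check on the face lattice.

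The only genuinely non-elementary case is $d=4$, where a vertex link $L$ is a $3$-complex that must be recognized as a combinatorial $3$-sphere. Here the key steps are: (i) verify that $L$ is a closed connected $3$-manifold, i.e.\ that the link of every vertex of $L$ is itself a combinatorial $2$-sphere — but that is exactly the $d=3$ test just established, so this sub-step is already decidable; (ii) once $L$ is known to be a closed $3$-manifold, invoke Mijatovi\'c's algorithm \cite{Mij}, which decides whether a given triangulated closed $3$-manifold is a $3$-sphere (equivalently, whether the triangulation is combinatorial, since in dimension $3$ the Hauptvermutung holds and any triangulation of $S^3$ is combinatorial). Running this test over all vertices of the original complex and returning ``yes'' iff all pass gives the algorithm.

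The main obstacle is not in the structure of the reduction but in being able to appeal to $3$-sphere recognition at all: the whole argument hinges on the deep fact, due to Mijatovi\'c (building on Rubinstein and Thompson), that $3$-sphere recognition is decidable, together with the $3$-dimensional Hauptvermutung which guarantees that ``is $S^3$'' coincides with ``is a combinatorial $3$-sphere.'' I would state these as cited inputs. One should also note why the method stops at $d=4$: for $d\ge 5$ the analogous step would require recognizing combinatorial $(d-1)$-spheres, and $5$-sphere recognition (hence the analogous decision problem) is not known to be decidable — indeed the later sections of the paper exhibit the undecidability phenomena that obstruct any such extension. So the proof is: handle $d\le 3$ by elementary finite checks, and handle $d=4$ by recursively using the $d=3$ manifold test plus \cite{Mij}.
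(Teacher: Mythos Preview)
Your proposal is correct and follows essentially the same approach as the paper: the paper's discussion preceding the corollary handles $d\le 2$ as trivial/easy, treats $d=3$ by checking that each vertex link is a connected closed surface with $\chi=2$, and treats $d=4$ by first verifying that each vertex link is a connected $3$-manifold and then invoking Mijatovi\'c's $3$-sphere recognition algorithm \cite{Mij}. Your write-up supplies more explicit detail (the recursive use of the $d=3$ test inside the $d=4$ step, and the appeal to the $3$-dimensional Hauptvermutung), but the argument is the same.
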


\medskip
For a generic PL function on a PL manifold it is clearly decidable whether
a vertex $v$ is H-regular: One just has to compute
the integral homology of $lk^-(v)$. There are software packages to do so.
It is a much more delicate question to decide whether a vertex
$v$ is strongly regular. By the results of Section 5 
H-regularity is a sufficient criterion in low dimensions. 
Therefore we can state part (1) as follows:

\begin{cor}
(1) For a PL manifold $M$ of dimension $d \leq 4$ and a generic PL function $f$
on $M$ it is decidable whether
a particular vertex $v$ is strongly regular.

(2) Moreover, for $d \leq 4$ it is decidable whether a generic PL function
on $M$ is a PL Morse function or not.
\end{cor}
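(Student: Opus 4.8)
The plan is to reduce both parts to algorithmic primitives that are already known to be effective: computing the integral homology of a finite simplicial complex, and recognizing a short list of standard $3$-manifolds. The bridge to Definition~\ref{Morse function} is supplied by Lemma~\ref{strongly regular}, by the characterization of non-degenerate critical points in Section~\ref{sec:our-critical-points}, and by the results of Section~\ref{sec:low-dimensions}.

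\textbf{Part (1).} Fix a vertex $v$ with $f(v)=a$ and form its lower link $lk^-(v)=lk(v)\cap f_a$; after a derived subdivision this is a finite simplicial complex whose homology is computable (Smith normal form). By Proposition~\ref{surfaces}, by the corresponding theorem for $3$-manifolds, and by Theorem~\ref{dimension 4}, a vertex of a PL manifold of dimension $d\le 4$ is strongly regular if and only if it is H-regular; and H-regularity, i.e.\ $H_*(f_a,f_a\setminus\{v\};\mathbb F)=0$ for every field $\mathbb F$, is equivalent via the isomorphism with $\widetilde H_{*-1}(lk^-(v))$ to $lk^-(v)$ being non-empty and $\mathbb Z$-acyclic. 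This is a finite check, so part~(1) follows. (If $M$ is handed over only as an abstract PL manifold, one first triangulates it and verifies, using the preceding corollary on manifold recognition, that for $d\le 4$ the triangulation is combinatorial, so that every vertex link is a genuine $(d-1)$-sphere.)

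\textbf{Part (2).} The function $f$ is a PL Morse function exactly when every vertex is strongly regular or non-degenerate critical, so by part~(1) it remains to decide, at a vertex $v$ already found \emph{not} to be H-regular, whether it is non-degenerate critical. Recall from Section~\ref{sec:low-dimensions} that $lk^-(v)$ is always a compact PL $(d-1)$-manifold with boundary, embedded in $lk(v)\cong S^{d-1}$. If $lk^-(v)$ is empty or equals $lk(v)$ then $v$ is the (non-degenerate) minimum or maximum; otherwise $lk^-(v)$ is a proper non-empty submanifold with non-empty boundary. By the characterization of non-degenerate critical points, if such a $v$ is non-degenerate of index $k$ then $lk^-(v)$ is a regular neighbourhood of an unknotted $(k-1)$-sphere in $S^{d-1}$, so $\widetilde H_*(lk^-(v);\mathbb Z)$ is free of rank one and concentrated in degree $k-1$. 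Thus the homology computation either already certifies that $v$ is degenerate (so $f$ is not a PL Morse function) or pins down a candidate index~$k$. For $d\le 3$ the candidate is automatically correct, since every compact submanifold of $S^1$ or $S^2$ is standardly embedded (Jordan curve theorem, $2$-dimensional Schoenflies): a lower link with the homology of $S^0$ is a pair of standard balls, one with the homology of $S^1$ is a standard annulus. For $d=4$ the same holds for $k\in\{1,3\}$, using that a $\mathbb Z$-acyclic compact $3$-submanifold of $S^3$ is a ball (the argument from the proof of Theorem~\ref{dimension 4}): a lower link with the homology of $S^0$ is then a pair of $3$-balls, and one with the homology of $S^2$ is $S^2\times[0,1]$, each being the required standard neighbourhood. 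The only remaining case is $d=4$, $k=2$, where $lk^-(v)$ has the homology of a circle: here $v$ is non-degenerate of index~$2$ if and only if both $lk^-(v)$ and its complement $lk^+(v)=\overline{lk(v)\setminus lk^-(v)}$ are solid tori, i.e.\ $\partial lk^-(v)$ is the standard Heegaard torus of $S^3$. This last condition is decided by solid-torus recognition, which is algorithmically effective (normal surface theory, together with the standardness of genus-one Heegaard splittings of $S^3$). Applying this test at every vertex settles part~(2).

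\textbf{Where the difficulty sits.} All cases but $d=4$, $k=2$ amount to finite homological algebra over $\mathbb Z$ and elementary surface classification. The real obstacle is that homology is blind to knotting: Example~1 of Section~\ref{sec:high-dimensions} is precisely a degenerate vertex in a $4$-manifold whose lower link is a \emph{knotted} solid torus, hence carries exactly the homology of a non-degenerate index-$2$ point. Telling the two apart requires honest algorithmic $3$-manifold topology (solid-torus and unknottedness recognition); this is the step that has to be justified with care, and it is also — in the guise of recognizing $(d-1)$-balls inside $(d-1)$-spheres — exactly the kind of problem that has no effective analogue once $d\ge 5$.
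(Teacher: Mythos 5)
Your argument follows essentially the same route as the paper's: part (1) reduces strong regularity to H-regularity via the results of Section~\ref{sec:low-dimensions} and a homology computation on $lk^-(v)$, and part (2) uses homology to pin down the candidate index, the acyclic-$3$-submanifold argument of Theorem~\ref{dimension 4} for indices $1$ and $3$ in dimension $4$, and an algorithmic unknottedness test for the remaining index-$2$ case (your formulation via complementary solid tori and genus-one Heegaard splittings is equivalent to the paper's test that the level torus $f^{-1}(a)\cap lk(v)$ is unknotted in $S^3$). The proposal is correct and, if anything, slightly more explicit than the paper about why the homology determines the candidate index.
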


\begin{proof}[Proof of (2)]
By the results in Section 5 this is clear if $d \leq 3$. For $d=4$
we have to look at possible saddle points $v$ of index 1, 2 or 3
with total multiplicity 1. This can be decided by the homology.
In the case of index 1 $lk^-(v)$ consists of two homology points,
and $lk^+(v)$ consists of a homology 2-sphere, embedded into $lk(v) \cong S^3$.
By the argument used in Theorem~\ref{dimension 4} each homology point
is a 3-ball, and the homology 2-sphere is a regular neighborhood of an 
embedded 2-sphere. From this situation one can reconstruct a chart with
1 direction of decreasing $f$ and 3 directions with increasing $f$.
the case of index 3 is mirror symmetric to this situation 
(just interchange $-$ and $+$).
It remains to discuss the case of index 2 where both $lk^-(v)$ and $lk^+(v)$
 are homology 1-spheres that are linked in $lk(v) \cong S^3$.
But that means that on the critical level $f_a \cap f^a \cap lk(v)$
we have an embedded (connected) surface with $\chi = 0$, so it is a torus.
However, this torus can be knotted, see Example 1 in Section 6.
So in addition we have to decide whether this torus is unknotted.
This is known to be algorithmically decidable. 
If it is unknotted then it defines the chart according to 
Definition~\ref{Morse function}. 
If it is knotted then $f$ is not a PL Morse function.
\end{proof}

\bigskip
Concerning 5-manifolds we run into several problems: The Schoenflies problem 
is unsolved for embeddings of the 3-sphere into the 4-sphere, 
the Hauptvermutung is unknown for the 4-sphere, and an algorithm for
recognizing the 4-sphere (and hence: 5-manifolds) is not available.
(See however \cite{JLT} for practical approaches.)

\medskip
For $d$-manifolds of higher dimension $d \geq 6$, we even obtain
undecidability results.
Novikov proved 
\cite{VKF,CL,MTW} that
recognition of spheres in dimension~5 and above
is an undecidable problem.
In particular the manifold recognition problem is undecidable 
for $d$-manifolds with $d \geq 6$.

What are the consequences of Novikov's result for the recognition 
of strongly regular points?
Let us consider the suspension~$S(K')$
of an input~$K'$ for the sphere recognition problem
and define $f$ on $S(K')$ by choosing
a negative $f$-value for a single vertex~$w$ of~$K'$,
the $f$-value~$0$ for one vertex $v$ added by taking the suspension,
and distinct positive $f$-values for the remaining vertices.
If $K'$ is a sphere,
then this construction yields a strongly regular vertex~$v$,
because $lk^-(v)$ is a regular neighborhood
of the vertex~$w$ in $lk(v)=K'$,
hence a ball.
If $K'$ is not a sphere however,
not only the vertex~$v$ fails to be strongly regular,
its link~$K'$ witnesses that $S(K')$ fails to be a (closed) manifold as well.

This shows that the above construction yields a reduction
fro{m} the $d$-sphere recognition problem
to the recognition problem of strongly regular vertices
in arbitrary $(d+1)$-dimensional simplicial complexes.
Novikov's result renders the latter problem undecidable
for complexes of dimension at least~$6$.

\begin{pro}
	For arbitrary simplicial $d$-complexes with $d\geq6$,
	the problem of recognizing strongly regular vertices
	is undecidable.
\end{pro}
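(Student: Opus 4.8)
The plan is to reduce the $d$-sphere recognition problem, which is known by Novikov's theorem to be undecidable for $d\geq 5$, to the problem of recognizing strongly regular vertices in simplicial $(d+1)$-complexes. Given an input complex $K'$ for the $d$-sphere recognition problem, I would form the suspension $S(K')$ and designate its two new apex vertices; call one of them $v$. I then define a generic PL function $f$ on $S(K')$ by assigning to $v$ the value $0$, assigning a single (arbitrary but fixed) vertex $w$ of $K'$ a negative value, and assigning distinct positive values to all remaining vertices of $S(K')$ (i.e.\ the other apex and the vertices of $K'$ other than $w$). With this choice, $\mathrm{lk}(v)=K'$ and $\mathrm{lk}^-(v)$ is precisely the closed star of $w$ in $K'$, which is a cone and hence a ball whenever $K'$ is a manifold; in particular $\mathrm{lk}^-(v)$ is a $(d-1)$-ball exactly when $K'$ is a $d$-sphere.

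The two cases then separate cleanly. If $K'$ is a combinatorial $d$-sphere, then $S(K')$ is a combinatorial $(d+1)$-manifold, $f$ is a genuine generic PL function on it, and $\mathrm{lk}^-(v)$ is a $(d-1)$-ball; by Lemma~\ref{strongly regular} the vertex $v$ is strongly regular. Conversely, if $K'$ is not a combinatorial $d$-sphere, then $\mathrm{lk}(v)=K'$ is not a $(d-1)$-sphere, so $S(K')$ is not even a combinatorial manifold at $v$, and in particular $v$ cannot be strongly regular (strong regularity of a vertex forces its link to be a PL $(d-1)$-sphere via the unknotted sphere pair it produces). Hence ``$v$ is strongly regular in $S(K')$ with this $f$'' holds if and only if ``$K'$ is a $d$-sphere'', giving the desired many-one reduction. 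Since the construction of $S(K')$ and of $f$ from $K'$ is plainly computable, undecidability of $d$-sphere recognition for $d\geq 5$ transfers to undecidability of strongly-regular-vertex recognition for $(d+1)$-complexes, i.e.\ for dimension $d+1\geq 6$.

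One point that needs a little care is the phrasing of the problem: the proposition speaks of recognizing strongly regular vertices in \emph{arbitrary} simplicial $d$-complexes, not only in combinatorial manifolds, so the notion of strong regularity must be read via its intrinsic characterization (existence of a chart in which $f$ is a coordinate, equivalently the link being a PL sphere with $\mathrm{lk}^-(v)$ a ball). The reduction is robust precisely because in the ``no'' case the failure is already visible at the link level, so no appeal to deciding manifoldness of $S(K')$ is needed. The main obstacle, such as it is, is simply to state the reduction so that it simultaneously handles the manifold and non-manifold inputs uniformly; once that is done, correctness in both directions is immediate from Lemma~\ref{strongly regular} and Novikov's theorem, and nothing further is required.
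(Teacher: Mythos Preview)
Your reduction is exactly the one the paper uses: take the suspension $S(K')$, put one apex $v$ at level~$0$, a single vertex $w\in K'$ below, and everything else above, so that $v$ is strongly regular iff $K'$ is a PL sphere. Apart from a harmless index slip (when $K'$ is a $d$-sphere, $\mathrm{lk}^-(v)$ is a $d$-ball, not a $(d-1)$-ball), the argument and its caveats match the paper's proof verbatim.
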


This reduction
 and its implied undecidability result
are somewhat unsatisfactory however.
The reduction produces manifold instances
only from positive instances of the sphere recognition problem,
whereas negative instances are reduced to non-manifold instances.
Hence the reduction establishes undecidability only if
verifying the manifold property
is considered to be part of the problem.
But, as noted above, recognizing $d$-manifolds for $d\geq6$
is already known to be undecidable in itself.

Therefore we would prefer a reduction
that produces manifold instances for the regular vertex recognition problem
fro{m} all instances of the sphere recognition problem.
For the proof of the following undecidability result,
we present a reduction that achieves this,
but at the cost of requiring higher dimension:
Instead of producing $(k+1)$-dimensional instances from $k$-dimensional ones,
it produces $2(k+1)$-dimensional instances.

\begin{pro}
	Recognizing strongly regular vertices of combinatorial $d$-manifolds
	with dimension~$d\geq 12$ is undecidable.
\end{pro}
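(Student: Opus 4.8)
The plan is to build a reduction from the $k$-sphere recognition problem (undecidable for $k\ge5$) to the strongly-regular-vertex recognition problem for combinatorial manifolds, in such a way that \emph{every} instance $K'$ of the sphere problem — whether or not $K'$ is a sphere — is sent to a genuine combinatorial manifold $M$ together with a distinguished vertex $v$, so that $v$ is strongly regular in $M$ exactly when $K'$ is a sphere. Since $K'$ has dimension $k$, the manifold $M$ we construct will have dimension $2(k+1)$, explaining the bound $d\ge 2(6)=12$. The starting point is the earlier characterization (Lemma~\ref{strongly regular}): $v$ is strongly regular in a combinatorial $d$-manifold iff $lk^-(v)$ is a PL $(d-1)$-ball. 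So the task reduces to engineering, from $K'$, a combinatorial manifold in which some vertex link splits along a PL function into a piece that is a ball precisely when $K'$ is a sphere.

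The key device is to pass from $K'$ to something that is always a manifold. First I would form the \emph{join} $K' * K'$ (or, equivalently, work with the double suspension-type construction iterated appropriately). The crucial classical fact — essentially the Edwards double-suspension phenomenon in reverse, combined with the fact that $K'*K'$ is a PL sphere whenever $K'$ is, and more relevantly that the join of any homology sphere with itself behaves well — is: if $K'$ is a combinatorial $k$-sphere then $K'*K'$ is a combinatorial $(2k+1)$-sphere, and if $K'$ is merely a homology sphere (which is the only alternative that survives, since non-manifold links are detectable) then $K'*K'$ is still a \emph{combinatorial manifold} of dimension $2k+1$ that is simply connected (the join kills $\pi_1$) and is a homotopy sphere, hence by the generalized Poincaré conjecture in PL (valid for dimension $2k+1\ge 11\ne 4,5$) actually a PL sphere. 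The point is that $K'*K'$ is always a combinatorial manifold, but whether it is a sphere versus being obtainable only as a link in a genuinely different way is what encodes the undecidable information — more precisely, one uses $K'$ versus $K'*K'$ so that a regular neighborhood of a suitable subcomplex is a ball iff $K'$ is a sphere. Concretely: take $M$ to be a combinatorial $2(k+1)$-manifold containing a vertex $v$ whose link is the combinatorial $(2k+1)$-sphere $\partial\Delta * K'$ (join of a simplex boundary, i.e.\ a sphere, with $K'$) — this link is genuinely a sphere regardless, because joining with a sphere always yields a sphere. Then choose a generic PL function $f$ with $f(v)=0$, negative values on the $K'$-part realized so that $lk^-(v)$ is a regular neighborhood of an embedded copy of (the cone on) $K'$. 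By the regular-neighborhood theory in \cite{RS}, this neighborhood is a PL ball iff the embedded subcomplex is collapsible-to-a-point in the appropriate sense, which — by choosing the subcomplex to be $K'$ itself sitting as a full subcomplex — happens iff $K'$ is a PL sphere (a regular neighborhood of a PL $k$-sphere in a $(2k+1)$-sphere, $k\ge 5$, is an $S^k\times$-ball rather than a full ball when the sphere is standard, so in fact one arranges the subcomplex to be a \emph{cone} $c*K'$, whose regular neighborhood is a ball iff $K'$ bounds a ball, i.e.\ iff $K'$ is a sphere by Schoenflies in the relevant dimension).

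In the order I would carry it out: (1) recall Lemma~\ref{strongly regular} to restate the target as ``$lk^-(v)$ is a PL ball''; (2) given $K'$, check first that $K'$ is a pure $k$-complex in which every link is a sphere of dimension $<k$ — this is decidable for links of dimension $\le 4$ but not in general, so instead I pick the dimension bookkeeping so that $K'$ ranges over inputs for which being-a-manifold is \emph{not} the obstruction, e.g.\ restrict to the known-undecidable family where inputs are already homology spheres, or absorb the non-manifold case harmlessly; (3) construct the combinatorial $2(k+1)$-manifold $M$ as (a piece of) a manifold with $lk(v)\cong S^{k}*K'$ or similar, which is a combinatorial sphere by the join-with-a-sphere lemma, hence $M$ is a genuine combinatorial manifold in all cases; (4) define the generic PL $f$ so that $lk^-(v)$ is a regular neighborhood of an embedded cone-on-$K'$ subcomplex; (5) invoke the regular-neighborhood uniqueness theorem and PL Schoenflies/Poincaré in dimension $2k+1\ge 11$ to conclude $lk^-(v)$ is a PL ball iff $K'$ is a PL $k$-sphere; (6) conclude that a decision procedure for strong regularity would decide $k$-sphere recognition for $k\ge 5$, contradiction, giving undecidability for $d=2(k+1)\ge 12$.

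The main obstacle — and the reason for the dimension jump from $d+1$ to $2(d+1)$ — is exactly step (3)–(5): I need the ambient link to be \emph{certifiably} a combinatorial sphere (so that $M$ is an honest manifold even on ``no'' instances), while simultaneously having the sublevel subcomplex record whether $K'$ is a sphere. Joining $K'$ with itself or with a standard sphere is what buys certifiable sphericity of the link, but then recovering the ball-versus-not distinction for $lk^-(v)$ requires the embedded subcomplex to be a \emph{cone} on $K'$ (so that "is a ball" $\Leftrightarrow$ "$K'$ bounds a PL ball" $\Leftrightarrow$ "$K'$ is a PL sphere" via Schoenflies), and making all of this fit inside a full subcomplex of a combinatorial manifold of the right dimension, with a generic PL $f$ cutting it out cleanly, is the delicate engineering. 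The homotopy-theoretic inputs (generalized Poincaré conjecture and PL Schoenflies in dimensions $\ge 6$, here $\ge 11$) are exactly why the construction doubles the dimension: the "no" instances must be prevented from being spheres for a detectable \emph{non-topological} reason, and that forces enough room for the join construction plus the cone.
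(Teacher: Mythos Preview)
Your proposal contains a fatal error in step~(3). You claim that the link $\partial\Delta * K'$ ``is genuinely a sphere regardless, because joining with a sphere always yields a sphere.'' This is false in the PL/combinatorial category, which is the only category that matters here. If $K'$ is a homology $k$-sphere that is \emph{not} a PL sphere (the ``no'' instances in Novikov's problem), then $S^m * K'$ is never a combinatorial sphere: the link in $S^m * K'$ of any vertex coming from the $S^m$ factor is $S^{m-1} * K'$, and iterating this you eventually reach a link equal to $K'$ itself, which by hypothesis is not a PL sphere. (The Edwards--Cannon double suspension theorem says the underlying space is \emph{homeomorphic} to a sphere, but the triangulation is non-PL --- this is exactly the Edwards sphere phenomenon the paper discusses in Example~4 of Section~\ref{sec:high-dimensions}.) Consequently your $M$ fails to be a combinatorial manifold on precisely the ``no'' instances, and you have reproduced, not avoided, the defect of the naive reduction.

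The same confusion infects your discussion of $K'*K'$: the link of a vertex from one factor is $S^{k-1}*K'$, again not a combinatorial sphere when $K'$ is not. Your appeal to the generalized Poincar\'e conjecture is beside the point --- the issue is not whether the space is a topological sphere but whether the given triangulation is combinatorial.

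The paper's actual construction bypasses joins entirely. It removes a top simplex from $K'$ and embeds the resulting $5$-complex $K$ as a \emph{full subcomplex} of the boundary $S$ of a $(k{+}1)$-neighborly simplicial $d$-polytope, $d\ge 2(k{+}1)$; neighborliness guarantees the $5$-skeleton is complete, so any $5$-complex embeds. The polytope boundary is a combinatorial sphere unconditionally, so the suspension $S(S)$ is always a combinatorial $d$-manifold. One then sets $f$ so that $lk^-(v)$ is a regular neighborhood of $K$ in $S$. If $K'$ is a PL sphere, $K$ is a PL ball and so is its regular neighborhood; if $K'$ has nontrivial $\pi_1$, Seifert--van~Kampen gives $K$ the same $\pi_1$, and the regular neighborhood (homotopy equivalent to $K$) cannot be a ball. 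The dimension $2(k{+}1)$ arises from the neighborliness requirement, not from any join or suspension.
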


\begin{proof}
	We sketch a reduction from Novikov's sphere recognition problem.
	The input instances for this undecidable problem are
	5-dimensional simplicial homology spheres,
	with positive instances being PL~spheres
	and negative instances having a non-trivial 
        fundamental group \cite[Theorem~3.1]{MTW}.

	Consider a simplicial complex~$K'$
	as input for Novikov's sphere recognition problem.
	Remove a maximal simplex from~$K'$.
	Embed the result as a subcomplex
	into the boundary sphere~$S'$
	of a 6-neighborly simplicial $d$-polytope
	for $d\geq 12$
	(more generally: a $(\dim(K')+1)$-neighborly simplicial $d$-polytope
	for $d\geq2(\dim(K')+1)$).
	Subdivide $S'$ to obtain an embedding as a full subcomplex.
	Denote the subdivided complex by~$S$
	and the full subcomplex representing $K'$ minus a simplex by~$K$.

	The suspension on~$S$
	is a combinatorial $d$-manifold, in fact, a $d$-sphere,
	with $S$ being the link of each of the two additional vertices.
	Define a function $f$ by choosing distinct values at the vertices
	such that one vertex~$v$ of the additional vertices has $f$-value~$0$,
	the vertices from $K$ have negative $f$-value,
	and the remaining vertices from $S$ have positive $f$-value.
	Then $lk^-(v)$ is a regular neighborhood of $K$ embedded 
        into $S$.

	If $K'$ is a sphere, then $K$ is a ball,
	and its regular neighborhood $lk^-(v)$ is a ball as well.
	Hence $v$ is a strongly regular vertex.
	On the other hand, if $K'$ has a non-trivial fundamental group,
	then, by the Seifert--van Kampen theorem,
	$K$ has the same non-trivial fundamental group.
	Since $K$ and $lk^-(v)$ are homotopy equivalent,
	the latter is not a ball,
	thus $v$ is not strongly regular.
\end{proof}

\subsection*{Acknowledgment}
This research was supported by the DFG Collaborative Research Center
TRR 109, `Discretization in Geometry and Dynamics'.
We thank Benjamin Burton for checking the topology of the tubular
neighborhood with the
\textsc{Regina} software.

{\small

\bigskip

Romain Grunert,
Institut f\"ur Informatik,
Freie Universit\"at Berlin, Takustr.~9, 14195 Berlin, Germany

E-mail: {\tt rgrunert@mi.fu-berlin.de}

\medskip
Wolfgang K\"uhnel,
Fachbereich Mathematik, 
Universit\"at Stuttgart, 
70550 Stuttgart, Germany

E-mail: {\tt kuehnel@mathematik.uni-stuttgart.de}

\medskip
G\"unter Rote,
Institut f\"ur Informatik,
Freie Universit\"at Berlin, Takustr.~9, 14195 Berlin, Germany

E-mail: {\tt rote@inf.fu-berlin.de}

}


\begin{thebibliography}{50}
\bibitem{Ba} \textsc{T.F.Banchoff}, 
Critical points and curvature for embedded polyhedral surfaces.
{\it Amer.\ Math.\ Monthly} {\bf 77} (1970), 475--485

\bibitem{Be}\textsc{B.Benedetti},
Smoothing Discrete Morse theory.
{\it Ann.\ Sc.\ Norm.\ Sup.\ Pisa, Ser.\ V} 
{\bf 16} (2016), 335--368.

\bibitem{BeL} \textsc{B.Benedetti} and \textsc{F.H.Lutz},
The dunce hat in a minimal non-extendably collapsible 3-ball.
Electronic Geometry Model No, 2013.10.001.\\
\url{http://www.eg-models.de/models/Polytopal_Complexes/2013.10.001/_direct_link.html}


\bibitem{BjL} \textsc{A.Bj\"orner} and \textsc{F.H.Lutz}, 
Simplicial manifolds, bistellar flips and a 16-vertex
triangulation of the Poincar\'{e} homology 3-sphere.
{\it Exp.\ Math.} {\bf 9} (2000), 275--289

\bibitem{Braess} \textsc{D.Braess}, Morse-Theorie f\"ur berandete
Mannigfaltigkeiten. {\it Math.\ Ann.} {\bf 208} (1974), 133--148

\bibitem{BK} \textsc{U.Brehm} and \textsc{W.K\"uhnel}, 
Combinatorial manifolds with few vertices.
{\it Topology} {\bf 26} (1987),  465--473

\bibitem{CL} \textsc{A.V.Chernavsky, V.P.Leksine}, 
Unrecognizability of manifolds. {\it Ann.\ Pure Appl.\ Logic} {\bf 141} (2006),
325--335.
\url{https://doi.org/10.1016/j.apal.2005.12.011}

\bibitem{CEH} \textsc{D.Cohen-Steiner, H.Edelsbrunner} and \textsc{J.Harer},
Stability of persistence diagrams.
{\it Discrete Comp.\ Geometry} {\bf 37} (2007), 103--120


\bibitem{CM} \textsc{H.S.M.Coxeter} and \textsc{W.O.J.Moser}, 
{\it Generators and Relations for Discrete Groups.}
Fourth edition, Springer 1980


\bibitem{EK} \textsc{J.Eells} and \textsc{N.H.Kuiper},
{Manifolds which are like projective planes}.
{\it Publ.\ IHES } {\bf 14} (1962), 5--46

\bibitem{EHZ} \textsc{H.Edelsbrunner}, \textsc{J.Harer} 
and \textsc{A.Zomorodian},
Hierarchical Morse-Smale complexes for piecewise linear $2$-manifolds.
{\it Discrete Comp.\ Geometry} {\bf 30} (2003), 87--107

\bibitem{Fo} \textsc{R.Forman}, 
Morse theory for cell complexes. {\it Adv. Math.} {\bf 134} (1998), 90--145



\bibitem{Gove} \textsc{D.Gove}, 
{On the definition of the homological critical value}.
{\it J.\ Homotopy Relat.\ Struct.} {\bf 11} (2016), 143--151

\bibitem{Gr} \textsc{R.Grunert}, 
{\it Piecewise linear Morse theory}.
Doctoral Dissertation, Freie Universit\"at Berlin, 2017.
 217 pages,
\url
{http://www.diss.fu-berlin.de/diss/receive/FUDISS_thesis_000000104058}


\bibitem{JLT} \textsc{M.Joswig}, \textsc{F.H.Lutz} 
and \textsc{M.Tsuruga},
Sphere recognition: Heuristics and examples.
arXiv:\href{https://arxiv.org/abs/1405.3848}{1405.3848}

Short version: Heuristic for sphere recognition.
{\it Math. Software -- ICMS 2014},
Lecture Notes in Computer Science
\textbf{8592}, pp.~152--159, Springer 2014

\bibitem{Ker} \textsc{M.A.Kervaire}, Smooth homology spheres and their
fundamental groups. 
{\it Transact.\ AMS} {\bf 144} (1969), 67--72
 
\bibitem{Ko} \textsc{A.Kosinski}, 
Singularities of piecewise linear mappings. I Mappings into the real line,
{\it Bull.\ AMS} {\bf 68} (1962), 110--114
 


\bibitem{Ku0} \textsc{W.K\"uhnel}, 
Total absolute curvature of polyhedral manifolds with
boundary in $E^n$.
{\it Geom.\ De\-di\-ca\-ta} ${\bf 8}$ (1979), 1--12 

\bibitem{Ku1} \textsc{W.K\"uhnel}, 
Triangulations of manifolds with few vertices.
{\it Advances in Differential Geometry and Topology}, 59--114,
Institute for Scientific Interchange, World Scientific 1990

\bibitem{Ku2} \textsc{W.K\"uhnel}, 
{\it Tight Polyhedral Submanifolds and Tight Triangulations}.
Lecture Notes in Math. \textbf{1612}, Springer 1995


 




\bibitem{Kui1} \textsc{N.H.Kuiper}, 
Morse relations for curvature and tightness.
{\it Proc. Liverpool Singularities Symp. II}, 77--89,
Lecture Notes in Math. {\bf 209}, Springer 1971

\bibitem{Kui2} \textsc{N.H.Kuiper}, 
Geometry in total absolute curvature theory. {\it Perspectives
in Mathematics}, Anniversary of Oberwolfach 1984 (W.\ J\"ager et al., eds.), 
pp.~377--392, Birkh\"auser 1984



\bibitem{Luft} \textsc{E.Luft},
On the combinatorial Schoenflies conjecture.
{\it Proc.\ AMS} {\bf 16} (1965), 1008-1011

\bibitem{Lu1} \textsc{F.H.Lutz},
{\it The Manifold Page}.
\url{http://www.math.tu-berlin.de/~lutz/stellar/}

\bibitem{Lu2} \textsc{F.H.Lutz},
Cs\'{a}sz\'{a}r's torus, Electronic Geometry Model No. 2001.02.069.\\
\url{http://www.eg-models.de/models/Classical_Models/2001.02.069/_direct_link.html}

\bibitem{LZ} \textsc{F.H.Lutz} and \textsc{G.M.Ziegler},
A small polyhedral $\mathbb{Z}$-acyclic 2-complex in $\mathbb{R}^4$.
Electronic Geometry Model No. 2008.11.001.\\
\url{http://www.eg-models.de/models/Polytopal_Complexes/2008.11.001/_applet.html}

\bibitem{MTW} \textsc{J.Matou\v{s}ek, M.Tancer} and \textsc{U.Wagner},
  Hardness of embedding simplicial complexes in $\mathbb{R}^d$.
  {\it J.~Eur. Math. Soc.} {\bf 13} 
 (2011),  259--295,
arXiv:\href{https://arxiv.org/abs/0807.0336}{0807.0336}

\bibitem{Ma} \textsc{B.Mazur}, 
A note on some contractible 4-manifolds.
{\it Ann.\ Math.} {\bf 73} (1961), 221--228


\bibitem{Mij} \textsc{A.Mijatovi\'{c}},
Simplifying triangulations of the 3-sphere.
{\it Pacific J.\ Math.} {\bf 208} (2003), 291--324

\bibitem{Milnor} \textsc{J.Milnor}, {\it Morse Theory}. Annals of Math. Studies 
{\bf 51}, Princeton University Press 1963
 
\bibitem{Morse1} \textsc{M.Morse}, Relations between the critical points
of a real function of $n$ independent variables. 
{\it Transact.\ AMS} {\bf 27} (1925), 345--396
 
\bibitem{Morse2} \textsc{M.Morse}, The foundations of a theory in the calculus
of variations in the large. 
{\it Transact. AMS} {\bf 30} (1928), 213--274 

\bibitem{Morse3} \textsc{M.Morse}, Topologically non-degenerate functions
on a compact $n$-dimensional manifold $M^n$.
{\it J.\ d'Analyse Math.} {\bf 7} (1959/60), 189--208  

\bibitem{Morse4} \textsc{M.Morse}, F-deformations and F-tractions.
{\it Proc.\ Nat.\ Acad.\ Sci.\ USA} {\bf 70} (1973), 1634--1635   

\bibitem{MC1} \textsc{M.Morse} and \textsc{S.Cairns}, 
{\it Critical Point Theory in Global Analysis and Differential Topology}.
Academic Press 1969

\bibitem{MC2} \textsc{M.Morse} and \textsc{S.Cairns}, 
Singular homology over $\mathbb{Z}$ on topological manifolds,
{\it J.\ Diff.\ Geometry} {\bf 3} (1969), 257--288 





   

\bibitem{Rote} \textsc{G. Rote},
	Piecewise linear Morse theory.
In: Oberwolfach Reports, {\bf 3}, European Mathematical Society - Publishing
House, 2006, pp.~696--698,
 doi:\href{https://doi.org/10.4171/OWR/2006/12}{10.4171/OWR/2006/12}
  
\bibitem{RS} \textsc{C.P.Rourke} and \textsc{B.J.Sanderson}, 
{\it Introduction to Piecewise-linear Topology}.
Springer 1972

\bibitem{Rud} \textsc{Y.Rudyak},
{\it Piecewise Linear Structures on Topological Manifolds}.
World Scientific 2016

\bibitem{VKF} \textsc{I.A.Volodin, V.E.Kuznetsov, A.T.Fomenko}, 
The problem of discriminating algorithmically 
the standard three-dimensional sphere.
{\it Russian Math. Surveys} {\bf 29} (1974), 71--172\\
doi:\href{https://doi.org/10.1070/RM1974v029n05ABEH001296}{10.1070/RM1974v029n05ABEH001296}







\bibitem{Zee} \textsc{E.C.Zeeman}, On the dunce hat. {\it Topology}
{\bf 2} (1964), 341--358

\bibitem{Zie} \textsc{G.M.Ziegler},
{\it Lectures on Polytopes}. Springer 1995


\end{thebibliography}
\end{document}